\Crefname{paragraph}{Section}{Sections}
\crefname{theo}{theorem}{theorems}
\providecommand{\keywords}[1]{\noindent {\textit{Keywords:}} #1}
\DeclareMathOperator{\Var}{Var}
\DeclareMathOperator{\supp}{supp}
\def\R{\mathbb R}
\def\N{\mathbb N}
\def\to{\rightarrow}
\newtheorem{theorem}{Theorem}[section]
\newtheorem{corollary}{Corollary}[section]
\newtheorem{lemma}{Lemma}[section]
\newtheorem{remark}{Remark}[section]
\newtheorem{assumption}{Assumption}
\let\original@addcontentsline\addcontentsline
\newcommand{\dummy@addcontentsline}[3]{}
\newcommand{\DeactivateToc}{\let\addcontentsline\dummy@addcontentsline}
\newcommand{\ActivateToc}{\let\addcontentsline\original@addcontentsline}
\begin{document}
	\title{Random domain decomposition for parabolic PDEs on graphs}
	
\author{Mart\'{i}n Hern\'{a}ndez\thanks{Chair for Dynamics, Control, Machine Learning, and Numerics, Alexander von Humboldt-Professorship, Department of Mathematics,  Friedrich-Alexander-Universit\"at Erlangen-N\"urnberg, 91058 Erlangen, Germany.} }

\maketitle

\begin{abstract}
The simulation of complex systems, such as gas transport in large pipeline networks, often involves solving PDEs posed on intricate graph structures. Such problems require considerable computational and memory resources. The \emph{Random Batch Method} (RBM) has shown promise in addressing these challenges via stochastic decomposition techniques. In this paper, we apply the RBM at the PDE level for parabolic equations on graphs, {without assuming any preliminary discretization in space or time}. We consider a non-overlapping domain decomposition in which the PDE coefficients and source terms are randomized. We prove that the resulting RBM-based scheme converges, in the mean-square sense and uniformly in time, to the true PDE solution with first-order accuracy in the RBM step size. Numerical experiments confirm this convergence rate and demonstrate substantial reductions in both memory usage and computational time compared to solving on the full graph. Moreover, these advantages persist across different time discretization schemes.
\end{abstract}

\maketitle

\keywords{Partial differential equations on graphs, Random batch method, Domain decomposition.}


\section{Introduction}

\subsection{Motivation}
Modeling and simulating physical phenomena such as gas transport or heat transfer in large, complex networks often leads to partial differential equations (PDEs) on graph structures \cite{MR3728372,Leuge,Mugnolo,wad1}. PDEs on graphs have been extensively studied for diffusion and transport over networked domains \cite{Leuge,Mugnolo,wad1,wad3,MR2818413}, and in singular‐limit regimes \cite{Egger,barcena2025control} and for control of parabolic and hyperbolic equations \cite{MR4421539,MR4581791,dager2006wave}. Nonetheless, the high complexity induced by the network size and topology can make classical solvers prohibitively expensive in both time and memory \cite{MR4175147}. A common strategy to reduce memory usage is domain decomposition \cite{MR2093789}. Within the context of graphs, both overlapping (Schwarz‐type) and non‐overlapping (Schur‐complement) methods have been developed to reduce the solver cost; see \cite[Chapter III]{MR2093789} and \cite{MR1857663} for overviews.  However, these methods rely on an outer iteration in which one alternates between solving the PDE on each subdomain (subgraph) with Dirichlet (or Neumann) data borrowed from its neighbors and then updating those interface data before the next pass. Although each local solve is smaller, repeated synchronization of boundary data across all subdomains until global convergence can dominate runtime, especially as the number of subdomains grows.

On the other hand, \emph{Random Batch Method} (RBM)—inspired by minibatch gradient descent—was introduced in \cite{ShietalRBM,MR4230431} to reduce the cost of simulating large‐scale interacting particle systems by randomly grouping particles into small batches over short temporal intervals. RBM shares with domain decomposition the idea of splitting the problem into subproblems, but rather than solving every subproblem at each step, RBM randomly selects a single subproblem in each time window. Subsequent extensions of RBM encompass finite‐dimensional dynamical systems \cite{MR4433122,MR4307003,veldman2023stability}, time‐discrete PDEs and gradient flows \cite{eisenmann2022randomized,Corella_hernandez2025}, and spatially discrete PDEs \cite{firstpaper}. However, each of these variants remains tied to a particular mesh or time‐discretization scheme, which restricts their applicability. In addition, the hallmark savings in memory and CPU time should emerge from the RBM principle itself, rather than from particular discretization choices. To address these issues, we introduce a continuous RBM at the PDE level, decoupling the RBM from any discretization to enable mesh‐independent convergence proofs and intrinsic computational savings.

\subsection{Contribution} 
Consider a connected finite metric graph $\mathcal{G} := (\mathcal{E}, \mathcal{V})$ with vertices $v \in \mathcal{V}$ and edges $e \in \mathcal{E}$. Denote by $\mathcal{V}_b$ and $\mathcal{V}_0$ the boundary and interior vertices of $\mathcal{G}$, and $\mathcal{E}(v)$ the edges adjacent to the vertex $v$. Let $T > 0$, and over each edge $e \in \mathcal{E}$, we consider the parabolic equation
\begin{align}\label{eq:heat1}
\begin{cases}
\partial_t y_e - \partial_{x}(a_e\partial_x y_e)+ b_e\partial_x y_e +p_e y_e = f_e, & (x,t) \in e \times (0, T), \\
y_{e'}(v, t) = y_{e''}(v, t), & (v,t) \in \mathcal{V}_0\times(0,T),\,\forall e', e'' \in \mathcal{E}(v), \\
\sum_{e \in \mathcal{E}(v)}a_e(v) \partial_x y_e(v, t) n_e(v) = 0, & 
(v,t) \in \mathcal{V}_0\times(0,T),\\
y_e(v, t) = g_v(t), & (v,t) \in \mathcal{V}_b\times(0,T),\, e \in \mathcal{E}(v), \\
y_e(x, 0)= y_e^0(x), & x \in e,
\end{cases}
\end{align}
where $y_e^0$ is the initial condition, $f_e$ the source term, $g_v$ the boundary data, and $a_e$, $b_e$, and $p_e$ are the diffusion, convection, and potential coefficients, respectively. Here $n_e: \mathcal{V} \to \{-1, 0, 1\}$ corresponds to the incidence vector, defined as
\begin{align*}
    n_e(v)=-1, \quad \text {if } e=(v, \cdot), \quad n_e(v)=1, \quad \text {if } e=(\cdot, v), \quad \text {and} \quad n_e(v)=0 \quad \text {otherwise. }
\end{align*}
Conditions $\eqref{eq:heat1}_2$–$\eqref{eq:heat1}_3$ correspond to continuity and the flux (Kirchhoff) condition at the interior vertices $\mathcal{V}_0$, while $\eqref{eq:heat1}_4$–$\eqref{eq:heat1}_5$ correspond to the boundary condition at $\mathcal{V}_b$ and the initial condition in $\mathcal{E}$.

Our main contributions are:
\begin{enumerate}
  \item \textbf{Continuous RBM on graphs.} We introduce an RBM for PDEs defined on graphs, specifically for \eqref{eq:heat1}. Our method relies on the following steps: First, we introduce a non‐overlapping decomposition of the graph into small subgraphs. Then, on each temporal window of length $h$, we randomly select one subgraph to evolve a localized version of \eqref{eq:heat1} while keeping the solution constant on the rest of the graph (freezing).  This freeze-and-evolve mechanism yields a randomized domain decomposition of the full-graph dynamics. We prove the resulting randomized system is well-posed and admits a unique solution in the appropriate function class.

  \item \textbf{Convergence estimates.}  
    We prove that the mean‐square error between the RBM solution and the true solution decays like $O(h)$, and we derive probability bounds ensuring each realization of the random system stays close to the exact solution of \eqref{eq:heat1} for sufficiently small $h$.

\item \textbf{Numerical examples.}
We present numerical simulations to demonstrate the $O(h)$ convergence rate. The results show that the Continuous RBM achieves accuracy comparable to solving on the full graph. Furthermore, we observe reductions in CPU time and memory usage compared to full-graph solvers. These results are illustrated using the implicit Euler scheme, the $\theta$-method, the Crank–Nicolson scheme, and the semi-implicit method.
\end{enumerate}

\subsection{Related work}\label{sec:rel_literature}

RBM was first proposed in \cite{ShietalRBM,MR4230431} for large‐scale interacting particle systems, grouping particles into random batches to reduce complexity. In \cite{MR4433122}, it was extended to finite‐dimensional linear systems—via decomposition of the system matrix—and to the linear‐quadratic regulator. Subsequent work \cite{MR4307003,veldman2023stability} applies continuous RBM to model predictive control in finite dimensions.

For PDEs, existing RBM schemes typically rely on a preliminary time discretization. In \cite{eisenmann2022randomized}, an RBM framework for nonlinear PDEs is introduced, viewed as an overlapping random domain decomposition. However, the authors first discretize the PDE in time using an implicit Euler scheme and then apply RBM, which forces the RBM parameter $h$ to coincide with the time discretization parameter $\Delta t$. They prove that the expected error between the randomized and exact solutions is $O(h)=O(\Delta t)$. Using similar ideas, \cite{MR4828491} introduces a time-discretized scheme for stochastic evolution equations and establishes convergence. In \cite{Corella_hernandez2025}, the same $O(h)$ rate is proved for finite-dimensional gradient flows and extended to infinite-dimensional flows, again, after temporal discretization via the minimizing movement scheme. In \cite{firstpaper}, a “discretize-RBM” approach is proposed for PDEs on graphs: this scheme, based on spatial discretization, yields expected‐error convergence under requirements on both $h$ and the spatial mesh size $\Delta x$. RBM has also been extended to the mean‐field limit of particle systems, notably to approximate kinetic equations such as the Landau and Fokker–Planck equations defined on the whole space \cite{MR4361973,MR4436794,MR4744252}.

It is important to note that the convergence estimates and frameworks from \cite{eisenmann2022randomized}, \cite{Corella_hernandez2025}, and \cite{firstpaper} do not imply a continuous RBM setting, since their formulations do not decouple the RBM parameter $h$ from the respective discretization parameters. Consequently, our main result cannot be deduced from any prior work.

\subsection{Outline}

The remainder of this article is organized as follows.  In \cref{sec:overview}, we present the domain decomposition setting and the continuous RBM dynamics. Subsequently, we state the main results.  In \cref{sec:example_decomp}, we show a decomposition example for a particular graph. In \cref{sec:random_derivation}, we derive the continuous RBM system. Sections \ref{sec:well-posedness} and \ref{sec:convergence} contain proofs of the well‐posedness and convergence results, respectively. Finally, in \cref{sec:numerical_simulations}, we present numerical experiments that corroborate our theoretical convergence rate and the computational savings provided by the RBM.

\section{Overview of the decomposition scheme and main results}\label{sec:overview}

In this section,  we present the RBM formulation, which leads to the definition of the random system. Then, we present the well‐posedness and convergence of the continuous RBM.

\subsection{Continuous random batch method}\label{sec:introduction_rbm}

We begin by introducing the graph decomposition, then the probabilistic setting, and finally, the random dynamics. Denote by $[M]=\{1,\dots,M\}$. An example of a decomposition satisfying the framework below is provided in \Cref{sec:example_decomp}.
\medbreak

\noindent {\bf Graph decomposition.} Let $M\in \N$ and consider a decomposition of $\mathcal{G}$ via a family of subgraphs $\{\mathcal{G}_s^i\}_{i\in[M]} := \{(\mathcal{E}_s^i, \mathcal{V}_s^i)\}_{i\in[M]}$ that is pairwise edge-disjoint and covers $\mathcal{G}$, that is, 
\begin{align*}
    \mathcal{E} = \bigcup_{i\in [M]} \mathcal{E}_s^i,\quad \mathcal{V} = \bigcup_{i\in [M]} \mathcal{V}_s^i, \quad \text{and}\quad \mathcal{E}_s^i\cap \mathcal{E}_s^j = \emptyset,\quad\text{for }i\neq j. 
\end{align*}
In the following, we denote by $\mathcal{V}_{s,0}^i$ and $\mathcal{V}_{s,b}^i$ the interior and boundary nodes of $\mathcal{V}_s^i$, respectively. For every $v\in \mathcal{V}_{s,0}^i$, we denote by $\mathcal E_s^i(v):=\mathcal E_s^i\cap\mathcal E(v)$. We assume that $\mathcal{E}^i_s(v)=\mathcal{E}(v)$ for every $v\in \mathcal{V}_{s,0}^i$, that is, interior vertices in the subgraphs keep all adjacent edges.
\begin{figure}[h]
\centering
\includegraphics[width=0.9\linewidth]{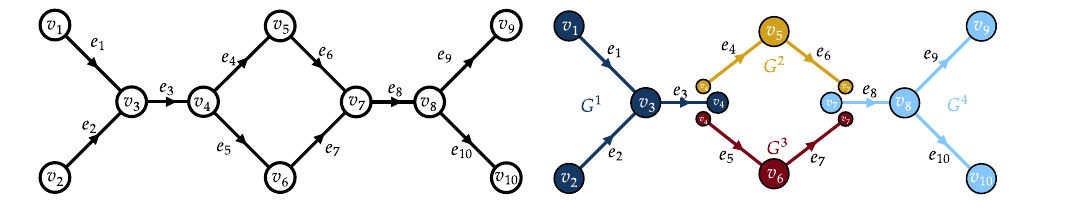}
\caption{Illustration of the graph $\mathcal{G}$ and its decomposition. (a) The full graph $\mathcal{G}$ with vertices $\mathcal{V} = \{v_1, \dots, v_{10}\}$ and edges $\mathcal{E} = \{e_1, \dots, e_{10}\}$. (b) Decomposition of $\mathcal{G}$ into non-overlapping subgraphs $\{\mathcal{G}_s^i\}_{i\in[4]}$.}\label{fig_1_deamon_graph}
\end{figure}

Denote by $ \mathcal{P}([M])$ the power set of $[M]$. Let $1\leq N\leq 2^M$ and $\mathcal{B}_1, \dots, \mathcal{B}_N\subset \mathcal{P}([M])$ be nonempty sets such that $\bigcup_{j\in [N]} \mathcal{B}_j = [M].$ Then, for every $j\in [N]$, consider the set of edges and vertices:
\begin{align}\label{eq:random_graph_intro}
  \mathcal{E}^j := \bigcup_{i\in\mathcal B_{j}}\mathcal{E}_s^i,
  \hspace{0.25cm}
  \mathcal{V}^j := \bigcup_{i\in\mathcal B_{j}}\mathcal{V}_s^i, \hspace{0.25cm}
  \mathcal{V}_0^j:=\{v\in\mathcal{V}^j : \mathcal{E}^j(v)=\mathcal{E}(v)\},
  \hspace{0.25cm}
  \mathcal{V}_b^j:=\mathcal{V}^j\setminus\mathcal{V}_0^j,
\end{align}
where $\mathcal{E}^j(v):=\mathcal{E}^j\cap \mathcal{E}(v)$. Hence, $\mathcal{V}_0^j$ collects the vertices whose incident edges are all in $\mathcal{E}^j$, while $\mathcal{V}_b^j$ contains the remaining (interface and boundary) vertices; see \Cref{sec:example_decomp} and \Cref{fig:differnt_nodes}. Then, we have the following structural assumption for the batches.
\begin{assumption}\label{assump:A1}
    For every $v\in \mathcal{V}_0$ there exists a batch $\mathcal{B}_\star$ such that $v\in \mathcal{V}^\star_0$.
\end{assumption}
\smallskip\noindent\textbf{Localization function.}
Given $\psi:\mathcal{G}\to\R$ set
\begin{align}\label{eq:def_Si}
   \chi_\psi^{i,j}(x):=\psi(x)\,\mathbf 1_{S^i_j}(x), \quad  S^i_j:=\mathcal{G}^i_s\setminus(\mathcal{V}_{b}^j\cap\mathcal{V}_0),
\end{align}
for every $i\in[M]$ and $ j\in[N]$, where $\mathbf 1_{S^i_j}$ denotes the indicator function on $S^i_j$. By definition $\operatorname{supp}\chi_\psi^{i,j}\subset\mathcal{G}_s^i$ for every $i\in[M]$, and $\sum_{i\in [M]}\chi_\psi^{i,j}=\psi$ for every $j\in [N]$ and   on each $ e\in\mathcal{E}$.
\medbreak

\noindent
{\bf Probabilistic framework:} Let $K \in \N$ and consider the RBM parameter $h = T/(K + 1) > 0$. Define the time grid $t_k:=k h$ for $k\in\{0,\dots,K\}$, and the intervals $I_k:=(t_{k-1},t_k)$ for each $k\in[K]$. Now, we introduce a family of i.i.d. random variables $\{\omega_k\}_{k\in[K]}$, each taking values in $[N]$. We denote by $p_j \in (0,1)$ the probability that $\omega_k$ equals $j \in [N]$, for each $k \in [K]$, which satisfy $\sum_{j\in [N]} p_j = 1$. 
\medbreak

Then, for a given function $\psi : \mathcal{G} \to \R$, we consider the piecewise constant in time random function  
\begin{align}\label{eq:coef_random}
    \zeta^{\omega_k}_\psi(x): = \sum_{i \in \mathcal{B}_{\omega_k}} \frac{\chi^{i,\omega_k}_\psi(x)}{\pi_i}, \hspace{0.3cm} t \in I_k, \hspace{0.3cm}\text{with} \hspace{0.3cm}  \pi_{i} = \sum_{j \in [N] : i \in \mathcal{B}_j} p_j,  \hspace{0.3cm} \text{for all } i \in [M],
\end{align} 
for every $k \in [K]$, where $\pi_i$ is a normalization constant. We assume that $\pi_i>0$ for every $i\in[M]$. It is noteworthy that by construction, for every $x\in e\in \mathcal{E}$ and $k \in [K]$ we have that  
\begin{align}\label{eq:general_unbias}
\mathbb{E}[\zeta_\psi^{\omega_k}(x)]
&=\sum_{j\in [N]} \sum_{i\in\mathcal B_j}\frac{\psi(x)\,\mathbf1_{\mathcal G_s^i}(x)}{\pi_i} p_j
=\sum_{i\in [M]} \frac{\psi(x)}{\pi_i}\,\mathbf1_{\mathcal G_s^i}(x)\sum_{j\in [N]:\,i\in\mathcal B_j}p_j\\
&\nonumber=\sum_{i\in [M]} \psi(x)\,\mathbf1_{\mathcal G_s^i}(x)
=\psi(x).
\end{align}
Therefore, the random function $\zeta_\psi^{\omega_k}$ is an unbiased estimator of $\psi$ on the edges for every $k \in [K]$. For $t\in(0,T)$ let us introduce
\begin{align}\label{eq:coef_random2}
  k_t := \min\{k\in[K] : t < k h\},
  \quad\text{and}\quad 
  \zeta_\psi(x,t) := \sum_{i\in\mathcal B_{\omega_{k_t}}}\frac{\chi_\psi^{i,\omega_{k_t}}(x)}{\pi_i}.
\end{align}
Note that  $\zeta_\psi(\cdot,t) = \zeta_\psi^{\omega_k}$ for every $t\in I_k$, thus $\zeta_\psi(\cdot,t)$ is piecewise constant in time.
\smallbreak

\noindent
{\bf Random system:} Now, for every $e\in\mathcal{E}$, the continuous RBM system reads as:
\begin{align}\label{eq:heat1random}
\begin{cases}
\partial_t z_e - \partial_x(\zeta_a\partial_x z_e) + \zeta_b\partial_x z_e + \zeta_p z_e = \zeta_1 f_e, & (x,t)\in e\times(0,T), \\
z_{e'}(v,t)=z_{e''}(v,t), & (v,t)\in\mathcal{V}_0\times(0,T), e',e''\in\mathcal{E}(v), \\
\zeta_1(v,t)\bigl(z_e(v,t)-g_v(t)\bigr)=0, & (v,t)\in\mathcal{V}_b\times(0,T),e\in\mathcal{E}(v), \\
\sum_{e\in\mathcal{E}(v)}\zeta_a(v,t)\partial_x z_e(v,t)n_e(v)=0, & (v,t)\in\mathcal{V}_0\times(0,T), \\
z_e(x,0)=y_e^0(x), & x\in e.
\end{cases}
\end{align}
Note that the vertex–continuity condition in \eqref{eq:heat1random} is enforced on the whole graph, whereas the boundary and flux terms are switched on or off by the functions $\zeta_1$ and $\zeta_a$, depending on the subgraph that is active. A rigorous derivation of~\eqref{eq:heat1random}—obtained by alternately \emph{freezing} the state on an inactive subgraph and \emph{evolving} it on the active subgraph at each interval $I_k=(t_{k-1},t_k)$—is given in Section \ref{sec:random_derivation}.

\begin{remark}[On \Cref{assump:A1}]
\Cref{assump:A1} ensures that every interior vertex in $\mathcal{V}_0$ has a positive probability of appearing as an interior vertex of some randomly chosen subgraph. Consequently, although the Kirchhoff flux condition is imposed only at the active interior vertices $\mathcal V_0^k$ within each interval, it is enforced in expectation across all $v\in\mathcal V_0$, thereby guaranteeing the consistency of the continuous RBM scheme.
\end{remark}

\subsection{Main results}

In the following, we consider the standard Lebesgue and Sobolev spaces on the graph; see \Cref{appendix1}.
Regarding the well-posedness of \eqref{eq:heat1}, we have the following theorem, whose proof is given in \Cref{sec:well-posedness}. 
\begin{theorem}[Maximal regularity]\label{th:well_posedness_nonhomo}
    Let the coefficients $(a,b,p)\in W^{1,\infty}_{pw}(\mathcal{E})\times L^\infty(\mathcal{E})\times L^\infty(\mathcal{E})$ with 
    \begin{align}\label{eq:assumption_coef_a}
        a_e(x)>a_0>0 \quad \text{for a.a.  } x\in e,\quad \text{for every } e\in\mathcal{E}.
    \end{align}
    Consider $y^0 \in L^2(\mathcal{E})$, $g \in H^1(0, T; l^2(\mathcal{V}_b))$, $f \in L^2(0, T; L^2(\mathcal{E}))$. Then, there exists a unique solution $y \in C([0, T]; L^2(\mathcal{E})) \cap L^2(0, T; H^1(\mathcal{E}))$ of \eqref{eq:heat1}. Moreover, if $y^0 \in H^1(\mathcal{E})$ and $g \in H^1(0, T; l^2(\mathcal{V}_b))$ satisfies the compatibility condition $y_e^0(v) = g_v(0)$ for each $v \in \mathcal{V}_b$, and the coefficient $b$ satisfies   
    \begin{align}\label{eq:assumption_coef_b}
        \sum_{e\in \mathcal{E}(v)}b_e(v)n_e(v) = 0, \quad\text{ for every }v\in \mathcal{V}_0,
    \end{align}
    then the unique solution of \eqref{eq:heat1} belongs to $C([0, T]; H^1(\mathcal{E}))\cap L^2(0,T; H^2(\mathcal{E}))\cap H^1(0,T; L^2(\mathcal{E}))$, and satisfies the energy estimate \begin{align}\label{eq:energy_estimation_y_wellposedness}
    \|y\|_{H^1(0,T;L^2(\mathcal{E}))} +&\|y\|_{L^2(0,T;H^2(\mathcal{E}))} \leq C \left( \|g\|_{H^1(0, T;l^2(\mathcal{V}_b))}+\|f\|_{L^2(0, T;L^2(\mathcal{E}))} + \|y^0\|_{H^1(\mathcal{E})} \right),
    \end{align}
    with $C$ a positive constant depending on $T$ and the coefficients $a,\,b$ and $p$.
\end{theorem}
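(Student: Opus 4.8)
The plan is to prove \Cref{th:well_posedness_nonhomo} by a standard lifting argument that reduces the non-homogeneous boundary data to a homogeneous problem, followed by the Galerkin/variational method combined with parabolic energy estimates. First I would reduce to homogeneous boundary conditions: pick a lifting $G \in H^1(0,T;H^1(\mathcal{E}))$ with $G_e(v,t) = g_v(t)$ on $\mathcal{V}_b$ (for instance, affine on each boundary edge, constant in the transversal structure, using that $g \in H^1(0,T;l^2(\mathcal{V}_b))$ and $\mathcal{V}_b$ is finite), and set $w := y - G$. Then $w$ solves a problem of the same type with zero boundary data, modified source $\tilde f := \zeta_1 f - \partial_t G + \partial_x(a\partial_x G) - b\partial_x G - pG$ (the second-order term in distributional sense, handled by integration by parts in the weak formulation), and initial datum $w^0 = y^0 - G(\cdot,0)$. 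The natural energy space is $V := \{v \in H^1(\mathcal{E}) : v_e(v) = v_{e''}(v) \text{ on } \mathcal{V}_0,\; v_e(v) = 0 \text{ on } \mathcal{V}_b\}$, densely and continuously embedded in $H := L^2(\mathcal{E})$, and the weak formulation reads $\langle \partial_t w, \phi\rangle + \mathfrak{a}(t;w,\phi) = \langle \tilde f,\phi\rangle$ for all $\phi \in V$, where $\mathfrak{a}(t;u,\phi) := \int_{\mathcal{E}} a\,\partial_x u\,\partial_x\phi + \int_{\mathcal{E}}(b\,\partial_x u + pu)\phi$; note the Kirchhoff condition $\eqref{eq:heat1}_3$ is the natural boundary condition built into this bilinear form and requires no essential enforcement.

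Next I would establish existence and uniqueness in $C([0,T];H)\cap L^2(0,T;V)$ via Lions' theorem (or a Faedo–Galerkin scheme on eigenfunctions of the Laplacian on $\mathcal{G}$ with the appropriate vertex conditions). The bilinear form is bounded on $V$ by the $W^{1,\infty}$-bound on $a$ and the $L^\infty$-bounds on $b,p$, and satisfies a Gårding inequality $\mathfrak{a}(t;u,u) \geq \tfrac{a_0}{2}\|\partial_x u\|_{L^2}^2 - \lambda\|u\|_{L^2}^2$ — here the convection term $\int b\,\partial_x u\,u$ is controlled by Young's inequality, absorbing $\tfrac{\varepsilon}{2}\|\partial_x u\|^2$ into the diffusion and paying $C_\varepsilon\|u\|^2$, while coercivity of $\|\partial_x\cdot\|_{L^2}$ over the $L^2$-norm on $V$ uses that $\mathcal{G}$ is connected and $\mathcal{V}_b\neq\emptyset$ (Poincaré on the graph), or is simply incorporated into the $-\lambda\|u\|^2$ term. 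Lions' theorem then gives a unique weak solution; the continuity in time and the initial-condition attainment follow from the standard Lions–Magenes regularity $w\in C([0,T];H)$ for solutions with $\partial_t w \in L^2(0,T;V')$. Uniqueness follows from testing the difference of two solutions against itself and Grönwall. Undoing the lift returns $y = w + G \in C([0,T];L^2(\mathcal{E}))\cap L^2(0,T;H^1(\mathcal{E}))$.

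For the higher regularity under $y^0\in H^1(\mathcal{E})$, the compatibility condition, and \eqref{eq:assumption_coef_b}, I would proceed formally (rigorously on the Galerkin level, then pass to the limit) by testing the equation with $\partial_t w$. This produces $\|\partial_t w\|_{L^2(\mathcal{E})}^2 + \tfrac{1}{2}\tfrac{d}{dt}\int_{\mathcal{E}} a|\partial_x w|^2 = \langle \tilde f,\partial_t w\rangle - \int b\,\partial_x w\,\partial_t w - \int pw\,\partial_t w$; absorbing the right-hand side via Young and Grönwall yields $w\in H^1(0,T;L^2(\mathcal{E}))\cap L^\infty(0,T;H^1(\mathcal{E}))$. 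The $L^2(0,T;H^2(\mathcal{E}))$ bound then comes from viewing $-\partial_x(a\partial_x w) = \tilde f - \partial_t w - b\partial_x w - pw =: F \in L^2(0,T;L^2(\mathcal{E}))$ as an elliptic equation on each edge with the vertex conditions, and invoking edge-wise elliptic regularity: on each $e$, $a_e\partial_{xx}w_e = -F_e + a_e'\partial_x w_e \in L^2$, so $w_e \in H^2(e)$ with the corresponding estimate. Collecting everything and undoing the lift (using $G \in H^1(0,T;H^1)$, and that a genuinely $H^2$-lift can be chosen since $g\in H^1(0,T;l^2(\mathcal{V}_b))$ suffices for an affine-on-edges lift which is automatically $H^2$ in $x$) gives \eqref{eq:energy_estimation_y_wellposedness}.

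The main obstacle I anticipate is the interplay between the convection term and the vertex conditions in the $\partial_t w$-test for higher regularity: the term $\int_{\mathcal{E}} b\,\partial_x w\,\partial_t w$ cannot be integrated by parts cleanly in time, but more delicately, justifying the formal energy identity requires that the test function $\partial_t w$ lie in $V$, which is exactly where the Galerkin approximation (eigenfunctions already in $V$) is needed, together with care that no spurious vertex terms appear when integrating $\int \partial_x(a\partial_x w)\,\partial_t w$ by parts — this is precisely where \eqref{eq:assumption_coef_b} and the Kirchhoff condition combine to kill the boundary contributions at $\mathcal{V}_0$, and the compatibility condition $y_e^0(v)=g_v(0)$ ensures $w^0\in V$ so that the initial value of $\tfrac12\int a|\partial_x w|^2$ is finite. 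A secondary technical point is the choice of lifting $G$ with enough regularity in both variables simultaneously; this is routine given the finiteness of $\mathcal{V}_b$ but must be stated carefully to close the estimate \eqref{eq:energy_estimation_y_wellposedness}.
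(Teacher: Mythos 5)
Your overall strategy is sound but genuinely different from the paper's. The paper also begins by lifting the boundary data, but then treats the resulting homogeneous problem by semigroup methods: it shows that the operator $A=-\partial_x(a\partial_x\cdot)+b\partial_x+p$ with Kirchhoff--Dirichlet domain is (up to a shift) m-accretive and generates an analytic semigroup, obtains maximal $L^2$-regularity from the abstract isomorphism for analytic generators, and then identifies the trace space $D_A(1/2,2)=(D(A),L^2(\mathcal{E}))_{1/2}=D(A^{1/2})$ with $H_0^1(\mathcal{E})$ via Lions' square-root theorem; that last identification uses $D(A^*)=D(A)$, and it is precisely there that hypothesis \eqref{eq:assumption_coef_b} enters. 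Your route --- Galerkin in a basis of $V$, a G\r{a}rding inequality, the second energy estimate obtained by testing with $\partial_t w_m$, and edge-wise elliptic regularity to recover $L^2(0,T;H^2(\mathcal{E}))$ --- is the classical ``improved regularity'' argument and works on a metric graph exactly as on a domain, because at the Galerkin level no integration by parts in $x$ is ever performed, so no vertex terms arise and the identity $\int_{\mathcal{E}} a\,\partial_xw_m\,\partial_x\partial_tw_m=\tfrac12\tfrac{d}{dt}\int_{\mathcal{E}} a|\partial_xw_m|^2$ only needs $a$ time-independent. A noteworthy by-product is that your argument does not appear to need \eqref{eq:assumption_coef_b} at all (the convection term is simply absorbed by Young's inequality), whereas the paper's interpolation-theoretic route does; in this respect your approach, once completed, is more elementary and slightly more general.

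The one step that fails as written is the lifting. An affine-on-edges lift $G$ with $G_e(v_b,t)=g_{v_b}(t)$ at a boundary vertex and $G\equiv 0$ away from the boundary edges is indeed edge-wise $H^2$ and continuous at the vertices, but its derivative at the interior endpoint $v_0\in\mathcal{V}_0$ of a boundary edge equals $-g_{v_b}(t)/l_e\neq 0$ in general, so $G$ violates the Kirchhoff flux condition there. Consequently $w=y-G$ solves a problem with an \emph{inhomogeneous} flux condition at $v_0$; in the weak formulation this appears as an extra right-hand side $\sum_{v\in\mathcal{V}_0}c_v(t)\,\phi(v)$, which lies in $V'$ but is not representable by an $L^2(\mathcal{E})$ source. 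This is harmless for the first (Lions/weak) part of the theorem, but it breaks the higher-regularity step: $\tilde f$ is no longer in $L^2(0,T;L^2(\mathcal{E}))$, the test with $\partial_t w_m$ cannot be closed, and $w$ genuinely fails to be $H^2$ across $v_0$. The fix is exactly the paper's choice: take a lifting that is, for instance, quadratic on each boundary edge (or affine times a cutoff) with $\mu_e(v,t)=\partial_x\mu_e(v,t)=0$ at every $v\in\mathcal{V}_0$, so that $w$ inherits the homogeneous continuity and Kirchhoff conditions and the modified source stays in $L^2(0,T;L^2(\mathcal{E}))$. With that correction (and dropping the stray $\zeta_1$ in your formula for $\tilde f$, which belongs to the random system, not to \eqref{eq:heat1}), your argument goes through.
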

\begin{remark}
    \Cref{th:well_posedness_nonhomo} establishes a maximal‐regularity result for parabolic PDEs on graphs. To our knowledge, this maximal regularity has not been established previously. The proof proceeds in two steps. First, one shows that the homogeneous (non–self‐adjoint) operator $A=-\partial_x(a\partial_x\,\cdot)+b\partial_x+p$ generates an analytic semigroup. Second, one identifies $D(A^{1/2})$ with a suitable interpolation space and then proves that this interpolation space coincides with $H_0^1(\mathcal{E})$.
\end{remark}
The following theorem addresses the well-posedness of the random system \eqref{eq:heat1random}.

\begin{theorem}[{Well-posedness of \eqref{eq:heat1random}}]\label{th:wellpossednes_random}
    Let $y^0 \in H^1(\mathcal{E})$, $f \in L^2(0, T; L^2(\mathcal{E}))$, the coefficients $(a, b, p) \in W^{1,\infty}_{pw}(\mathcal{E}) \times L^\infty(\mathcal{E}) \times L^\infty(\mathcal{E})$ satisfying \eqref{eq:assumption_coef_a}-\eqref{eq:assumption_coef_b}, and $g \in H^1(0, T; l^2(\mathcal{V}_b))$ satisfy the compatibility condition $y^0_e(v) = g_v(0)$ for each $v \in \mathcal{V}_b$. Then, there exists a unique solution $z$ of \eqref{eq:heat1random} belonging to the class $H^{1}(0, T; L^2(\mathcal{E})) \cap C([0, T]; H^1(\mathcal{E}))$, and the following energy estimate holds:
    \begin{align}\label{eq:energy_heat}
     \|z\|_{H^1(0, T; L^2(\mathcal{E}))} + &\|z\|_{L^2(0, T; H^1(\mathcal{E}))} \leq C \left( \|g\|_{H^1(0, T; l^2(\mathcal{V}_b))} + \|f\|_{L^2(0, T; L^2(\mathcal{E}))}+ \|y^0\|_{H^1(\mathcal{E})} \right),
    \end{align}
    where $C > 0$  is a constant depending on the coefficients and $T$, but independent of $h$. 
\end{theorem}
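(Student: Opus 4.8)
The plan is to reduce the well-posedness of the random system \eqref{eq:heat1random} to a finite, interval-by-interval application of \Cref{th:well_posedness_nonhomo}. On each subinterval $I_k=(t_{k-1},t_k)$ the realized coefficients $\zeta_a(\cdot,t)\equiv\zeta_a^{\omega_k}$, $\zeta_b(\cdot,t)\equiv\zeta_b^{\omega_k}$, $\zeta_p(\cdot,t)\equiv\zeta_p^{\omega_k}$ and the source multiplier $\zeta_1^{\omega_k}$ are constant in time, and by construction they are supported on the active subgraph $\mathcal{G}^{\omega_k}$ (they vanish on the frozen part up to the interface ``skin'' $\mathcal{V}_b^j\cap\mathcal{V}_0$). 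So first I would show that on $I_k$ the system decouples into: (i) a parabolic problem on $\mathcal{G}^{\omega_k}$ with diffusion $\zeta_a^{\omega_k}$, convection $\zeta_b^{\omega_k}$, potential $\zeta_p^{\omega_k}$, source $\zeta_1^{\omega_k}f$, Dirichlet data $g$ on $\mathcal{V}_b\cap\mathcal{V}_b^j$ and on the interface vertices the value inherited from the previous step; and (ii) the trivial ODE $\partial_t z_e=0$ (i.e.\ freezing, $z_e(\cdot,t)=z_e(\cdot,t_{k-1})$) on every edge of the inactive subgraphs. One must check that this decoupled pair indeed solves \eqref{eq:heat1random} on $I_k$: the vertex-continuity condition $\eqref{eq:heat1random}_2$ is automatic because continuity is inherited from $t_{k-1}$ on the frozen side and enforced by the local problem on the active side, and they match at the interface by the Dirichlet data choice; the flux condition $\eqref{eq:heat1random}_4$ holds because $\zeta_a^{\omega_k}$ vanishes outside $\mathcal{G}^{\omega_k}$, so the Kirchhoff sum at a frozen or interface vertex is vacuously zero, while at a vertex in $\mathcal{V}_0^j$ it is exactly the Kirchhoff condition for the local problem (here \Cref{assump:A1} and the structural hypothesis $\mathcal{E}_s^i(v)=\mathcal{E}(v)$ guarantee that the active interior vertices see all their edges, so the local and global Kirchhoff conditions coincide).

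Next I would invoke \Cref{th:well_posedness_nonhomo} on the active subgraph for the local problem on $I_k$. The coefficients $\zeta_a^{\omega_k}$, etc., still satisfy the hypotheses of that theorem: $\zeta_a^{\omega_k}\in W^{1,\infty}_{pw}$ with lower bound $a_0/\max_i\pi_i>0$ on the active part (since $\zeta_a=\sum_i \chi_a^{i,j}/\pi_i$ and $\pi_i\in(0,1]$), $\zeta_b^{\omega_k}\in L^\infty$, $\zeta_p^{\omega_k}\in L^\infty$, and the structural Kirchhoff-compatibility \eqref{eq:assumption_coef_b} for $\zeta_b^{\omega_k}$ at interior vertices of $\mathcal{G}^{\omega_k}$ follows from \eqref{eq:assumption_coef_b} for $b$ together with $\mathcal{E}^j(v)=\mathcal{E}(v)$ for $v\in\mathcal{V}_0^j$ and the fact that $\zeta_b^{\omega_k}=b/\pi_i$ there. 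Thus on $I_k$ we get a unique solution in $C(\overline{I_k};H^1)\cap L^2(I_k;H^2)\cap H^1(I_k;L^2)$ with the energy estimate \eqref{eq:energy_estimation_y_wellposedness}, provided the initial trace $z(\cdot,t_{k-1})\in H^1$ is compatible with the Dirichlet data at time $t_{k-1}$ — and compatibility propagates because the previous step produced an $H^1$-in-time, hence $C^0$-in-time-with-values-in-$H^1$, solution whose terminal trace matches the boundary/interface data. Gluing the $K+1$ pieces at the grid points $t_k$ (each inner trace lies in $H^1(\mathcal{E})$ and the $L^2$-derivatives match up to a finite jump set of measure zero) yields a global $z\in H^1(0,T;L^2(\mathcal{E}))\cap C([0,T];H^1(\mathcal{E}))$; uniqueness is immediate from uniqueness on each piece.

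For the $h$-independent energy estimate \eqref{eq:energy_heat}, the key point is that the per-interval constant $C$ from \eqref{eq:energy_estimation_y_wellposedness} depends only on $T$ and on uniform bounds for the randomized coefficients (namely $\|a\|_{W^{1,\infty}}/\min_i\pi_i$, $\|b\|_{L^\infty}/\min_i\pi_i$, $\|p\|_{L^\infty}/\min_i\pi_i$ and the ellipticity constant $a_0/\max_i\pi_i$), all of which are independent of $h$ and of the realization $\{\omega_k\}$. I would carry out a Gronwall-type accumulation: estimating on $I_k$ the $H^1$-norm of $z(\cdot,t_k)$ by the $H^1$-norm of $z(\cdot,t_{k-1})$ plus the local data norms $\|f\|_{L^2(I_k;L^2)}$, $\|g\|_{H^1(I_k;l^2)}$ — with a multiplicative factor $e^{Ch}$ rather than an additive drift — and summing the squared local energy-norm contributions over $k$. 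Since $\sum_k \|f\|_{L^2(I_k;L^2)}^2=\|f\|_{L^2(0,T;L^2)}^2$ and similarly for $g$, and $\prod_k e^{Ch}=e^{CT}$ is $h$-independent, the sum telescopes into the global bound \eqref{eq:energy_heat}.

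The main obstacle I anticipate is the careful bookkeeping of the Dirichlet/interface data and the compatibility condition across the gluing: the local problem on $\mathcal{G}^{\omega_k}$ has as Dirichlet boundary not only the original $\mathcal{V}_b$-nodes but also the interface nodes $\mathcal{V}_b^j\cap\mathcal{V}_0$, where the data is the (time-independent on $I_k$) frozen value inherited from $t_{k-1}$; one must verify that this boundary datum lies in $H^1(I_k;l^2)$ (it is constant in time, so trivially so, with the spatial trace in $l^2$ because $z(\cdot,t_{k-1})\in H^1(\mathcal{E})\hookrightarrow C(\mathcal{E})$) and that the compatibility $z_e(v,t_{k-1})=$ (the datum at $v$) holds at time $t_{k-1}$, which is tautological. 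A secondary subtlety is checking that the frozen part really is governed by $\partial_t z=0$ with no spurious coupling — this requires that $\zeta_a^{\omega_k}$ and $\zeta_1^{\omega_k}$ vanish identically on the inactive edges, which is exactly the support property $\operatorname{supp}\chi_\psi^{i,j}\subset\mathcal{G}_s^i$ recorded after \eqref{eq:def_Si}, combined with $\mathcal{B}_{\omega_k}$ not containing the inactive indices. Once these structural facts are nailed down, the rest is a routine finite induction plus Gronwall.
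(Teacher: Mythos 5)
Your proposal follows essentially the same route as the paper's proof: a finite induction over the intervals $I_k$, decoupling each step into the frozen ODE on the inactive edges and the local parabolic problem \eqref{eq:random_interval1} on the active subgraph (solved via \Cref{th:well_posedness_nonhomo} with the frozen trace serving as interface Dirichlet data), followed by gluing the pieces through the vertex-continuity condition. Your explicit Gronwall/telescoping argument for the $h$-independence of the constant in \eqref{eq:energy_heat} is actually spelled out in more detail than in the paper's own proof, but it is the intended mechanism.
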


\begin{remark}
The proof of \Cref{th:wellpossednes_random} hinges on two facts. (i) By \Cref{th:well_posedness_nonhomo}, on each interval $I_k$, the dynamics restricted to the active subgraph admits a unique solution whose terminal value lies in $H^{1}$ on that subgraph. (ii)  The vertex–continuity condition in \eqref{eq:heat1random} propagates this $H^{1}$ regularity to the full graph.  
Starting with $z_0\in H^{1}(\mathcal{E})$, we therefore obtain $z(t_1)\in H^{1}(\mathcal{E})$ after the first step, and the argument can be re-applied on $I_{2}$. Induction over $k\in[K]$ yields global well-posedness.
\end{remark}

\smallbreak

To quantify the variance of the RBM, we introduce $\Lambda:(0,T)\to\R$ by
\begin{align}\label{eq:def_lambda}
     \Lambda(t) :=&
    \mathbb{E}\left[\|\partial_{x}((a - \zeta_a) \partial_x y)\|_{L^2(\mathcal{E})}^2 + \| (b - \zeta_b) \partial_x y\|_{L^2(\mathcal{E})}^2\right.\\
    &\nonumber\qquad\left.+ \|(p - \zeta_p) y\|_{L^2(\mathcal{E})}^2 + \|(1-\zeta_1 ) f\|_{L^2(\mathcal{E})}^2\right],
\end{align}
where $y$ is the solution of \eqref{eq:heat1}, and $f$ and $(a,b,p)$ are the respective source term and coefficients. Note that when $N = 1$,  systems \eqref{eq:heat1} and \eqref{eq:heat1random} coincide and $\Lambda \equiv 0$.

Now, we present our main convergence theorem.

\begin{theorem}[Convergence]\label{th:convergence}
Let $y^0 \in H^1(\mathcal{E})$, $f \in L^2(0, T; L^2(\mathcal{E}))$, the coefficients $(a, b, p) \in W^{1,\infty}_{pw}(\mathcal{E}) \times L^\infty(\mathcal{E}) \times L^\infty(\mathcal{E})$ satisfying \eqref{eq:assumption_coef_a}-\eqref{eq:assumption_coef_b}, and $g \in H^1(0, T; l^2(\mathcal{V}_b))$ satisfy the compatibility condition $y^0_e(v) = g_v(0)$ for each $v \in \mathcal{V}_b$. Suppose that Assumption \ref{assump:A1} holds. Let $y$ be the solution of \eqref{eq:heat1} and $z$ the solution of \eqref{eq:heat1random}. Then, $\Lambda\in L^1(0,T)$ is uniformly bounded with respect to $h$, and
    \begin{align*}
        \sup_{t \in [0, T]} \mathbb{E}\big[\|z(t) - y(t)\|^2_{L^2(\mathcal{E})}\big] \leq C \|\Lambda\|_{L^1(0, T)}^{1/2} h,
    \end{align*}
    where $C > 0$ depends on the problem data and $T$, but is independent of $h$.
\end{theorem}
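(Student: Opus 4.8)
The plan is to derive an evolution equation for the error $w(t):=z(t)-y(t)$ and estimate $\mathbb{E}[\|w(t)\|_{L^2(\mathcal{E})}^2]$ by a Gronwall-type argument adapted to the piecewise-constant-in-time randomness. Subtracting \eqref{eq:heat1} from \eqref{eq:heat1random}, I would write, in weak form, $\partial_t w - \partial_x(a\partial_x w) + b\partial_x w + pw = F(t)$, where $F(t)$ collects the ``consistency defect'' terms $\partial_x((\zeta_a-a)\partial_x y) - (\zeta_b-b)\partial_x y - (\zeta_p-p)y + (\zeta_1-1)f$, together with the boundary/flux terms that are switched by $\zeta_1,\zeta_a$; here one must be careful that $w$ satisfies vertex-continuity on all of $\mathcal{V}_0$ but the flux condition only at the active $\mathcal{V}_0^k$, and $w=0$ at active boundary vertices while at frozen boundary vertices $w$ carries the defect $z-g$. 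The key structural fact, from \eqref{eq:general_unbias} and \Cref{assump:A1}, is that $\mathbb{E}[F(t)]=0$ on each interval conditioned on the past, so the defect is mean-zero.

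The heart of the argument is the standard RBM ``telescoping'' trick. I would test the error equation with $w$ and use coercivity (from \eqref{eq:assumption_coef_a} and, crucially, \eqref{eq:assumption_coef_b}, which makes the convection term controllable without spoiling the flux cancellation at interior vertices) to get
\begin{align*}
\frac{1}{2}\frac{d}{dt}\|w\|_{L^2(\mathcal{E})}^2 + \frac{a_0}{2}\|\partial_x w\|_{L^2(\mathcal{E})}^2 \leq C\|w\|_{L^2(\mathcal{E})}^2 + \langle F(t), w(t)\rangle.
\end{align*}
The term $\langle F(t),w(t)\rangle$ is not mean-zero because $w(t)$ correlates with $F(t)$ on the current interval $I_{k_t}$. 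The trick is to split $w(t) = w(t_{k_t-1}) + (w(t)-w(t_{k_t-1}))$. The first piece is measurable with respect to the past, so $\mathbb{E}[\langle F(t), w(t_{k_t-1})\rangle]=0$. The second piece is $O(h)$ in an appropriate norm: from the error equation, $\|w(t)-w(t_{k_t-1})\|$ on $I_{k_t}$ is controlled by $h$ times (the $L^2$ norm of $F$ on that interval plus lower-order terms), which after Cauchy--Schwarz and taking expectations contributes $C h\,\mathbb{E}[\|F(t)\|_{\cdot}^2] \le Ch\,\Lambda(t)$ up to handling the boundary defect separately via the $H^1(0,T;l^2(\mathcal{V}_b))$ regularity of $g$ and the energy bound of \Cref{th:wellpossednes_random}. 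Integrating in time and applying Gronwall yields $\sup_t \mathbb{E}[\|w(t)\|^2] \le C h \|\Lambda\|_{L^1(0,T)}$; taking square roots and absorbing gives the stated rate $C\|\Lambda\|_{L^1}^{1/2}h$ — I would double-check the exponent bookkeeping, since the cleanest version of this estimate typically produces $h$ inside, and the $\|\Lambda\|_{L^1}^{1/2}$ likely comes from a Cauchy--Schwarz split between an $L^1$-in-time factor and an $h$ factor rather than from $\mathbb{E}[\|F\|^2]\le\Lambda$ directly, so the precise route to that particular form needs care.

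Two auxiliary points must be dispatched first. I would establish $\Lambda\in L^1(0,T)$ uniformly in $h$ by noting that $\zeta_a,\zeta_b,\zeta_p,\zeta_1$ are uniformly bounded (the weights $1/\pi_i$ are fixed, independent of $h$), so $\Lambda(t) \le C\big(\|\partial_x(a\partial_x y)\|^2 + \|b\partial_x y\|^2 + \|py\|^2 + \|f\|^2\big)(t)$, and the right side is in $L^1(0,T)$ precisely by the maximal-regularity bound \eqref{eq:energy_estimation_y_wellposedness} from \Cref{th:well_posedness_nonhomo} (this is where $y\in L^2(0,T;H^2(\mathcal{E}))$ is needed). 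Second, one needs that the jumps of $w$ across the grid points $t_k$, induced by the discontinuity of $\zeta$, do not accumulate — but since $z$ and $y$ are both continuous in time with values in $L^2$ (indeed $H^1$), $w$ is genuinely continuous, so there is no jump term, only the piecewise structure of $F$.

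The main obstacle I anticipate is the careful treatment of the boundary and Kirchhoff terms in $F$: unlike the interior PDE coefficients, these live on vertices, the defect $(1-\zeta_1)f$-type contribution at boundary vertices involves $g$ and its time derivative, and the flux term $\sum_{e}\zeta_a(v)\partial_x w_e(v)n_e(v)$ at frozen interior vertices is a genuine distributional object that must be paired against $w$ correctly in the weak formulation. Making the integration-by-parts bookkeeping rigorous — so that every vertex term either cancels (active vertices), is mean-zero (by \Cref{assump:A1}), or is absorbed into $\Lambda$ plus the $g$-regularity — is where the real work lies; the rest is a routine, if lengthy, energy-plus-Gronwall computation.
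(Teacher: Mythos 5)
Your proposal follows essentially the same strategy as the paper: test the error equation with $y-z$, use the unbiasedness \eqref{eq:general_unbias} together with the independence of $\omega_k$ from the past to annihilate, in expectation, the pairing of the consistency defect with a past-measurable piece of the test function, bound the remaining increment by $O(h)$, and close with Gr\"onwall; the integrability of $\Lambda$ via the maximal regularity of $y$ is exactly the paper's \Cref{lemma:lambda_bounded}. The vertex bookkeeping you defer is carried out as you anticipate: the boundary sum localizes to $\mathcal{V}^{\omega_k}$ by the support of $\zeta_a$, vanishes at interface vertices because $\zeta_a(v)=0$ there, at active boundary vertices because $z=y=g$, and at active interior vertices by the Kirchhoff condition; frozen boundary vertices never enter the identity, so the defect $z-g$ you worry about there is invisible to the $L^2$ energy estimate.

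Two points in your sketch need correction to actually reach the stated bound. First, the increment you peel off should be that of $z$ alone: the paper writes $y - z = \bigl(y - z(t_{k-1})\bigr) + \bigl(z(t_{k-1}) - z\bigr)$, noting that $y$ is deterministic so the first piece is past-measurable. Estimating $\|w(t)-w(t_{k-1})\|$ from the error equation by ``$h$ times the norm of $F$'' is not justified, since $\partial_t w$ contains the full second-order elliptic part acting on $w$, not just $F$; the paper instead bounds $\mathbb{E}\bigl[\|z(t)-z(t_{k-1})\|^2_{L^2(\mathcal{E})}\bigr]\le h\int_{I_k}\mathbb{E}\bigl[\|\partial_t z\|^2_{L^2(\mathcal{E})}\bigr]\,d\tau$ using the $h$-uniform estimate of \Cref{th:wellpossednes_random}. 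Second, the route you describe, namely $\langle F,\text{increment}\rangle\lesssim h\,\mathbb{E}[\|F\|^2]\le h\,\Lambda(t)$, would integrate to $C h\|\Lambda\|_{L^1(0,T)}$ rather than $C\|\Lambda\|_{L^1(0,T)}^{1/2}h$. The paper keeps $\Lambda^{1/2}(t)$ after Cauchy--Schwarz in $\omega$ and in $L^2(\mathcal{E})$, pairs it with $\mathbb{E}\bigl[\|z(t_{k-1})-z(t)\|^2\bigr]^{1/2}$, and then applies Cauchy--Schwarz in time on each $I_k$ and once more over the sum in $k$, yielding $h\,\|\Lambda\|_{L^1(0,T)}^{1/2}\,\mathbb{E}\bigl[\|z\|^2_{H^1(0,T;L^2(\mathcal{E}))}\bigr]^{1/2}$. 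Neither issue reflects a wrong idea, but as written your computation does not land on the theorem's stated bound.
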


\begin{remark}
  Note that the regularity assumptions in \Cref{th:convergence} are the same as those in \Cref{th:well_posedness_nonhomo} and \Cref{th:wellpossednes_random} when the maximal regularity is established. This is not merely a coincidence; the proof of \Cref{th:convergence} relies on energy estimates, for which $H^1(0,T;L^2(\mathcal{E}))$ and $L^2(0,T;H^2_{pw}(\mathcal{E}))$ regularity of $z$ and $y$, respectively, is required. Then, after several energy estimates, and ensuring constants independent of $h$, the proof is concluded by an appropriate application of Gr\"onwall's lemma.
\end{remark}
Finally, the following corollary is a direct consequence of Theorem \ref{th:convergence}.

\begin{corollary}\label{coro:convergence}Let $y^0 \in H^1(\mathcal{E})$, $f \in L^2(0, T; L^2(\mathcal{E}))$, the coefficients $(a, b, p) \in W^{1,\infty}_{pw}(\mathcal{E}) \times L^\infty(\mathcal{E}) \times L^\infty(\mathcal{E})$ satisfying \eqref{eq:assumption_coef_a}-\eqref{eq:assumption_coef_b}, and $g \in H^1(0, T; l^2(\mathcal{V}_b))$ satisfy the compatibility condition $y^0_e(v) = g_v(0)$ for each $v \in \mathcal{V}_b$. Let $y$ be the solution of \eqref{eq:heat1} and $z$ the solution of \eqref{eq:heat1random}. Then, the following statements hold:
    \begin{itemize}
        \item[(i)] There exists a constant $C > 0$, independent of $h > 0$, such that 
        \begin{align*}
            \sup_{t \in [0, T]} \|\mathbb{E}[z(t)] - y(t)\|_{L^2(\mathcal{E})}^2 \leq C \|\Lambda\|_{L^1(0, T)}^{1/2} h.
        \end{align*}
        
        \item[(ii)] There exists a constant $C > 0$, independent of $h > 0$, such that 
        \begin{align*}
            \sup_{t \in [0, T]} \Var\big[\|z(t) - y(t)\|_{L^2(\mathcal{E})}\big] \leq C \|\Lambda\|_{L^1(0, T)}^{1/2} h.
        \end{align*}
        
        \item[(iii)] For all $\varepsilon > 0$, there exists a constant $C > 0$, independent of $h > 0$, such that 
        \begin{align*}
            \sup_{t \in [0, T]}\mathbb{P}\bigg( \|z(t) - y(t)\|_{L^2(\mathcal{E})} > \varepsilon\bigg) \leq \frac{C \|\Lambda\|_{L^1(0, T)}^{1/2} h}{\varepsilon^2}.
        \end{align*}
    \end{itemize}
\end{corollary}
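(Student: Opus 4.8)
The plan is to obtain all three statements as pointwise-in-$t$ consequences of the mean-square estimate established in \Cref{th:convergence}, namely
\begin{align*}
\sup_{t\in[0,T]}\mathbb{E}\big[\|z(t)-y(t)\|_{L^2(\mathcal{E})}^2\big]\le C\|\Lambda\|_{L^1(0,T)}^{1/2}h,
\end{align*}
combined only with elementary inequalities. First I note that under the stated hypotheses \Cref{th:wellpossednes_random} gives $z\in C([0,T];H^1(\mathcal{E}))$ along every realization, and on each window $I_k$ the random coefficients take finitely many values; hence for each fixed $t$ the map $\omega\mapsto z(t)$ is a well-defined $L^2(\mathcal{E})$-valued random variable with finite second moment, so all expectations below make sense and the Bochner-space Jensen inequality applies.

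For (i), since $y(t)$ is deterministic, $\mathbb{E}[z(t)]-y(t)=\mathbb{E}[z(t)-y(t)]$, and Jensen's inequality for the convex functional $v\mapsto\|v\|_{L^2(\mathcal{E})}^2$ on the Hilbert space $L^2(\mathcal{E})$ yields $\|\mathbb{E}[z(t)]-y(t)\|_{L^2(\mathcal{E})}^2\le\mathbb{E}[\|z(t)-y(t)\|_{L^2(\mathcal{E})}^2]$; taking the supremum over $t\in[0,T]$ and invoking \Cref{th:convergence} gives the bound. For (ii), set $X_t:=\|z(t)-y(t)\|_{L^2(\mathcal{E})}$, a nonnegative real random variable with $\mathbb{E}[X_t^2]<\infty$; then $\Var[X_t]=\mathbb{E}[X_t^2]-(\mathbb{E}[X_t])^2\le\mathbb{E}[X_t^2]=\mathbb{E}[\|z(t)-y(t)\|_{L^2(\mathcal{E})}^2]$, and again taking the supremum and applying \Cref{th:convergence} concludes. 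For (iii), I apply Markov's inequality to $X_t^2$: for every $\varepsilon>0$,
\begin{align*}
\mathbb{P}\big(\|z(t)-y(t)\|_{L^2(\mathcal{E})}>\varepsilon\big)=\mathbb{P}(X_t^2>\varepsilon^2)\le\frac{\mathbb{E}[X_t^2]}{\varepsilon^2}=\frac{\mathbb{E}[\|z(t)-y(t)\|_{L^2(\mathcal{E})}^2]}{\varepsilon^2},
\end{align*}
and bounding the numerator uniformly in $t$ by \Cref{th:convergence} produces the stated estimate, with the same constant $C$ as in that theorem.

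There is no genuine obstacle here: the corollary is a purely formal consequence of the second-moment bound. The only point deserving a line of justification—rather than a difficulty—is the measurability and integrability of $t\mapsto z(t)$ as an $L^2(\mathcal{E})$-valued random element, needed so that the Jensen step in (i) and the variance and Markov steps in (ii)–(iii) are legitimate; as noted above this follows directly from the well-posedness class in \Cref{th:wellpossednes_random} and the discrete, finitely-valued nature of the random coefficients on each interval $I_k$.
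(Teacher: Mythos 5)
Your proposal is correct and follows essentially the same route as the paper: Jensen's inequality for part (i), the identity $\Var[X]=\mathbb{E}[X^2]-(\mathbb{E}[X])^2$ for part (ii), and Markov's inequality for part (iii), all applied to the mean-square bound of \Cref{th:convergence}. The only (harmless) deviation is in (ii), where you simply drop the nonnegative term $(\mathbb{E}[X_t])^2$ rather than invoking part (i) as the paper does; this is if anything slightly cleaner, and your added remark on measurability of $\omega\mapsto z(t)$ is a reasonable extra precaution.
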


\begin{proof}
    Part (i) follows from Theorem \ref{th:convergence} and Jensen's inequality. Part (ii) is derived using the relation $\Var[X] = \mathbb{E}[X^2] - (\mathbb{E}[X])^2$ along with part (i). Finally, part (iii) is a consequence of Markov's inequality applied to Theorem \ref{th:convergence}.
\end{proof}

\Cref{coro:convergence} ensures that the expected trajectory of the continuous RBM approaches the actual trajectory as $h \to 0$. Additionally, as $h$ decreases, the variance of the RBM solution diminishes. This is particularly significant because it implies that for sufficiently small $h$, a single realization of the random system serves as a representative trajectory of the expected trajectory, thereby closely approximating the solution of the parabolic equation.

\section{A decomposition example}\label{sec:example_decomp}

We now illustrate a non-overlapping decomposition of $\mathcal{G}$ following \Cref{sec:introduction_rbm}.

\smallbreak
\noindent {\bf 1. Feasible decomposition:} 
We will consider the graph and decomposition introduced in \Cref{fig_1_deamon_graph}.  By inspection, we note that the decomposition is pairwise edge-disjoint. Moreover, for each interior vertex $v\in\mathcal{V}_{s,0}^i$ one checks that $\mathcal{E}^i_s(v)=\mathcal{E}(v)$, for every $i\in[4]$. For example, at $v_3\in\mathcal{V}_{s,0}^1$ we have  $\mathcal{E}^1_s(v_3) = \{e_1,e_2,e_3\} = \mathcal{E}(v_3)$. Then, within this decomposition, we introduce the following family of batches
\begin{itemize}
    \item {\bf Option 1:} $\mathcal{B}_{1} = \{1\}$, $\mathcal{B}_{2} = \{2\}$, $\mathcal{B}_{3} = \{3\}$, $\mathcal{B}_{4} = \{4\}$, $\mathcal{B}_{5} = \{1,2,3\}$, and $\mathcal{B}_{6} = \{2,3,4\}$; or
    \item {\bf Option 2:} $\mathcal{B}_{1} = \{1\}$, $\mathcal{B}_{2} = \{2\}$, $\mathcal{B}_{3} = \{3\}$, $\mathcal{B}_{4} = \{4\}$, $\mathcal{B}_5=[4]$.
\end{itemize}
Note that neither $v_4$ nor $v_7$ lies in any $\mathcal{V}_{s,0}^i$. However, both {\bf option 1} and  {\bf option 2}, guarantee \Cref{assump:A1}. In fact, for {\bf option 1} we have that $v_4\in \mathcal{V}^5$ and $v_7\in \mathcal{V}^6$ (see table below), while for {\bf option 2} we have that $v_4,\,v_7\in \mathcal{V}^5$. Note that, as long as there exists $\mathcal{B}_* =[M]$, \cref{assump:A1} automatically holds.
\medbreak

\noindent {\bf 2. Localization function:} Let us first analyze the sets of vertices defined in \eqref{eq:random_graph_intro}, associated with the batches from {\bf option 1}. We have the following table:

\begin{table}[H]
\centering
\resizebox{\textwidth}{!}{%
\begin{tabular}{c|cccccc}
 & $\mathcal{B}_1$ & $\mathcal{B}_2$ & $\mathcal{B}_3$ & $\mathcal{B}_4$ & $\mathcal{B}_5$ & $\mathcal{B}_6$ \\
\hline
$\mathcal{V}^j$   & $\{v_1,v_2,v_3,v_4\}$                    & $\{v_4,v_5,v_7\}$                        & $\{v_4,v_6,v_7\}$                        & $\{v_7,v_8,v_9,v_{10}\}$                 & $\{v_1,v_2,v_3,v_4,v_5,v_6,v_7\}$            & $\{v_4,v_5,v_6,v_7,v_8,v_9,v_{10}\}$      \\
$\mathcal{V}_0^j$ & $\{v_3\}$                                & $\{v_5\}$                                & $\{v_6\}$                                & $\{v_8\}$                                & $\{v_3,v_4,v_5,v_6\}$                        & $\{v_4,v_5,v_6,v_7,v_8\}$                    \\
$\mathcal{V}_b^j$ & $\{v_1,v_2,v_4\}$                        & $\{v_4,v_7\}$                            & $\{v_4,v_7\}$                            & $\{v_7,v_9,v_{10}\}$                     & $\{v_1,v_2,v_7\}$                            & $\{v_3,v_9,v_{10}\}$                         \\
$\mathcal{V}_b^j\cap\mathcal{V}_0$ & $\{v_4\}$ & $\{v_4,v_7\}$ & $\{v_4,v_7\}$ & $\{v_7\}$ & $\{v_7\}$ & $\{v_3\}$ \\
\end{tabular}%
}
\end{table}

Now, we can analyze the function $\zeta^j_\psi$ defined in \eqref{eq:coef_random} for $j\in[6]$. By definition:
\begin{align}\label{eq:example_zetas}
     &\nonumber\zeta^5_\psi(x) = \sum_{i\in \mathcal B_5}\frac{\chi_\psi^{i,5}}{\pi_i} =\frac{\chi_\psi^{1,5}(x)}{\pi_1}+\frac{\chi_\psi^{2,5}(x)}{\pi_2}+\frac{\chi_\psi^{3,5}(x)}{\pi_3}, \\
     &\zeta^6_\psi(x) = \sum_{i\in  \mathcal B_6}\frac{\chi_\psi^{i,6}}{\pi_i} =\frac{\chi_\psi^{2,6}(x)}{\pi_2}+\frac{\chi_\psi^{3,6}(x)}{\pi_3}+\frac{\chi_\psi^{4,6}(x)}{\pi_4}, \\
     &\nonumber\zeta^j_\psi(x) = \sum_{i\in  \mathcal B_j}\frac{\chi_\psi^{i,j}}{\pi_i} =\frac{\chi_\psi^{j,j}(x)}{\pi_j}, \quad \text{for }j\in[4].
\end{align}
Figure \ref{fig:chi_functions} illustrates $\zeta^1_\psi$, $\zeta^5_\psi$, and $\zeta^6_\psi$ when $\psi\equiv1$, and $p_j=1/6$ for $j\in [6]$. In this setting, we have that 
\begin{align*}
    \pi_1 = \frac{1}{6}+\frac{1}{6} =\frac{1}{3},\quad \pi_2 = \frac{1}{6}+\frac{1}{6}+\frac{1}{6} =\frac{1}{2},\quad
    \pi_3 = \frac{1}{6}+\frac{1}{6}+\frac{1}{6} =\frac{1}{2},\quad \pi_4 = \frac{1}{6}+\frac{1}{6} =\frac{1}{3}.
\end{align*}
\begin{figure}[h!]
    \centering
    \includegraphics[width=\linewidth]{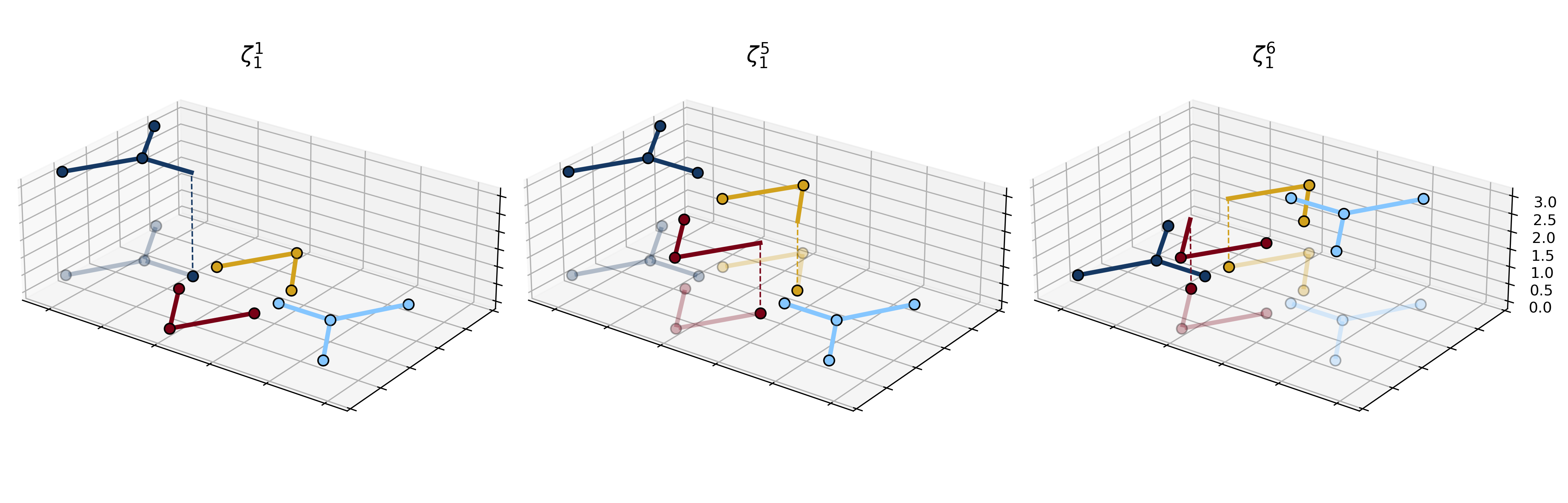}
    \caption{Illustration of the functions $\zeta^1_1$, $\zeta^5_1$, and $\zeta^6_1$ defined in \eqref{eq:example_zetas}. Here, each edge and vertex is identified as in \Cref{fig_1_deamon_graph}. The z-axis for the three plots varies between 0 and 3.}
    \label{fig:chi_functions}
\end{figure}
Note that $\zeta^j_1$ can be discontinuous at $\mathcal{V}_0$. Thus, even if $\psi$ is continuous across the vertices, $\zeta^j_1$ might not be. In general, if $\psi\in W^{1,p}_{pw}(\mathcal{E})$, then $\zeta^j_\psi\in W^{1,p}_{pw}(\mathcal{E})$ for every $j\in[N]$ and $p\in [1,\infty]$. 

\section{Derivation of the random system}\label{sec:random_derivation}

Let us derive the random system \eqref{eq:heat1random}. Initially, the system is defined separately for each time interval. In the following, for simplicity, for every $k\in [K]$ we denote by $\zeta^k = \zeta^{\omega_k}$ and $\mathcal{G}^k=(\mathcal{E}^k,\mathcal{V}^k) = (\mathcal{E}^{\omega_k},\mathcal{V}^{\omega_k})=\mathcal{G}^{\omega_k}$. Now, for each $k \in [K]$, we solve the ODE 
\begin{align}\label{eq:random_interval2}
    \begin{cases}
        \partial_t \xi_e^k = 0, & (x,t) \in e \times I_k, \, e \in \left( \mathcal{E}^k \right)^c, \\
        \xi_{e'}^k(v,t) = \xi_{e''}^{k}(v,t), &v\in \left( (\mathcal{V}^k)^c \setminus \mathcal{V}_b \right),\,\forall e',\, e'' \in \mathcal{E}(v), \\
        \xi_e^k(x,t_{k-1}) = z_e^{k-1}(x,t_{k-1}), & x \in e \in \left( \mathcal{E}^k \right)^c,
    \end{cases}
\end{align}
for every $t\in I_k$. Here, the function $z_e^{k}$ is defined by
\begin{align}\label{eq:definition_z}
      z_e^{k} = \begin{cases}
          \xi_e^{k}(x,t_{k}), & \text{in } \left( \mathcal{E}^k \right)^c, \\
          \eta_e^{k}(x,t_{k}), & \text{in } \mathcal{E}^k,
      \end{cases}
\end{align}
for every $k \in [K]$, with $z^0_e(x,0) = y_e^0(x)$, and $\eta$ is the solution of the parabolic equation
\begin{align}\label{eq:random_interval1}
      \begin{cases}
        \partial_t \eta_e^k - \partial_{x}(\zeta^k_a \partial_x \eta^k_e) + \zeta_b^k \partial_x \eta_e^k + \zeta_p^k \eta_e^k = \zeta^k_1 f_e, & (x,t) \in e \times I_k, \, e \in \mathcal{E}^k, \\
        \eta_{e'}^k(v,t) = \eta_{e''}^k(v,t), & v \in \mathcal{V}_0^k,\, \forall e',\, e'' \in \mathcal{E}(v),\\
        \eta_{e'}^k(v,t) = \xi_{e''}^k(v,t), & v \in (\mathcal{V}^k_b \cap \mathcal{V}_0),\, \forall e', e'' \in \mathcal{E}(v), \\
        \eta_e^k(v,t) = g_v(t), & v \in (\mathcal{V}_b^k \cap \mathcal{V}_b),\, e \in \mathcal{E}(v), \\
        \sum_{e \in \mathcal{E}(v)}\zeta^k_a(v)\partial_x \eta_e^k(v) n_e(v) = 0, &v \in \mathcal{V}_0^k, \\
        \eta_e^k(x,t_{k-1}) = z^{k-1}(x,t_{k-1}), & x \in e \in \mathcal{E}^k,
      \end{cases}
\end{align}
for every $t\in I_k$ and $k \in [K]$. Here $\mathcal{V}^k_b \cap \mathcal{V}_0$ are the vertices on the interface between the chosen subgraph and its complement (see \Cref{fig:differnt_nodes}). The second condition in \eqref{eq:random_interval1} corresponds to a continuity condition inside the chosen subgraph $\mathcal{G}^k$. The third and fourth conditions in \eqref{eq:random_interval1} correspond to the boundary data of the problem. The fifth condition is the flux condition inside $\mathcal{V}_0^k$. 

\begin{remark}\label{remark_system_discrete} On each interval $I_k$, equation \eqref{eq:random_interval2} simply freezes the solution from the previous time interval on the inactive edges by setting $  \xi^k_e \equiv z^{k-1}_e$ for $e\in(\mathcal{E}^k)^c,$ while \eqref{eq:random_interval1} evolves the solution on the active subgraph $\mathcal{G}^k$, imposing continuity across the interface vertices and the prescribed Dirichlet data on $\mathcal{V}_b^k\cap\mathcal{V}_b$. Since no flux condition is enforced at the interface (to avoid overdetermination of the system), the glued solution $z^k$ generally does not belong to $H^2(\mathcal E)$, although the continuity condition guarantees $z^k\in H^1(\mathcal E)$ at each step.
\end{remark}

\begin{figure}[h]
      \centering
      \includegraphics[width=0.6\linewidth]{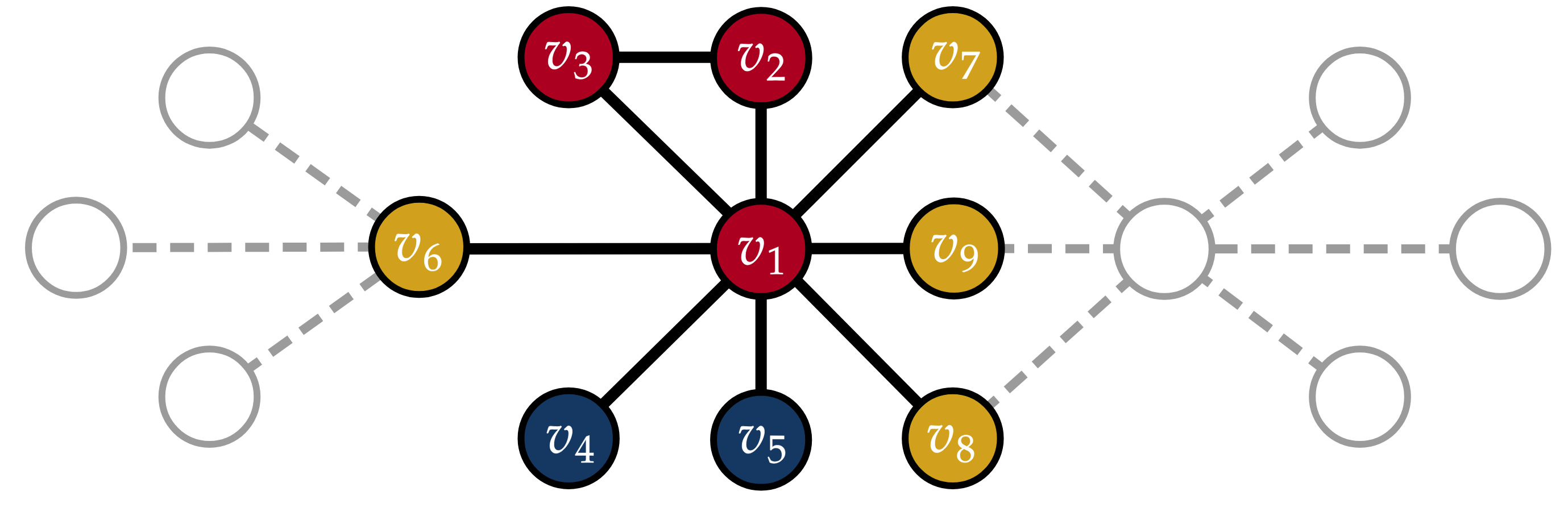}
      \caption{Illustration of a subgraph defined by \eqref{eq:random_graph_intro}. For a given batch $\mathcal{B}_j$, let $\mathcal{E}^j$ and $\mathcal{V}^j$ be the edges and vertices associated. We illustrate $\mathcal{E}^j$ with solid black lines. The colored vertices belong to $\mathcal{V}^j$. Red vertices $\{v_1,v_2,v_3\}=\mathcal{V}_0^j$; blue vertices $\{v_4,v_5\}=\mathcal{V}_b^j \cap \mathcal{V}_b$; yellow vertices (interface vertices) $\{v_6,v_7,v_8,v_9\}=\mathcal{V}_b^j \cap \mathcal{V}_0$; the blue and yellow vertices $\{v_4,v_5,v_6,v_7,v_8,v_9\}=\mathcal{V}_b^j$. Remaining unlabeled vertices are in $(\mathcal{V}^j)^c$.}
      \label{fig:differnt_nodes}
    \end{figure}

 One can verify that, on each $I_k$,  \eqref{eq:heat1random} on $(\mathcal{G}^k)^c$ reduces to \eqref{eq:random_interval2} and \eqref{eq:heat1random} on $\mathcal{G}^k$ reduces to \eqref{eq:random_interval1}. System \eqref{eq:heat1random} will be primarily used to establish the convergence of the method, while systems \eqref{eq:random_interval2}-\eqref{eq:random_interval1} are more suitable (yet equivalent) for numerical implementation. In particular, system \eqref{eq:random_interval2}-\eqref{eq:random_interval1} can be solved by following \Cref{alg1}.

\begin{algorithm}[h!]
  \caption{Subgraph Decomposition Scheme}\label{alg1}
  \begin{algorithmic}[1]
    \State Define $z^{0}\gets y^{0}$
    \For{$k=1,\dots,K$}
      \State Sample $\omega_{k}$; set $\mathcal{G}^{\omega_k}=(\mathcal{E}^{\omega_k},\mathcal{V}^{\omega_k})$ by \eqref{eq:random_graph_intro}
      \State Define $\xi^{k}\gets z^{\,k-1}$ on $\mathcal{E}\setminus\mathcal{E}^{\omega_k}$
      \State Compute $(\zeta_a^k,\zeta_b^k,\zeta_p^k,\zeta_1^k)$ via \eqref{eq:coef_random2}
      \State Solve \eqref{eq:random_interval1} on $\mathcal{G}^{\omega_k}$ for $\eta^{k}$ using $\xi^{k}$
      \State Define $z^{k}_{e}\gets$  
        $\eta^{k}_{e}$ and $\xi^{k}_{e}$ via \eqref{eq:definition_z}
    \EndFor
  \end{algorithmic}
\end{algorithm}

\section{Well-posedness of the parabolic equation on a graph}\label{sec:well-posedness} The proof of \Cref{th:well_posedness_nonhomo} proceeds in two main steps. First, we show that the corresponding homogeneous system is well-posed by proving that its associated operator is m-accretive. By the Hille--Yosida theorem, this guarantees a unique solution in $C([0,T];L^2(\mathcal{E}))$. Second, we prove the maximal regularity by establishing that the operator associated with the homogeneous system is analytic. Hence, for any initial condition in the real interpolation space $(D(A),L^2(\mathcal{E}))_{1/2}$, there exists a unique solution in $H^1(0,T; L^2(\mathcal{E}))\cap L^2(0,T;D(A))$, where $D(A)$ is the operator domain. We then show that $(D(A),L^2(\mathcal{E}))_{1/2}\cong H_0^1(\mathcal{E})$. These technical results can be found in \Cref{appendix1}. With these results established, the existence and uniqueness of the non-homogeneous system are followed by lifting the boundary data to the right-hand side of the PDE.

\subsection{Well-posedness of the homogeneous system}
We proceed by showing the well-posedness of the homogeneous Dirichlet boundary problem:
\begin{align}\label{eq:homo_dirichlet}
    \begin{cases}
        \partial_t u_e - \partial_{x}( a_e \partial_x u_e) + b_e \partial_x u_e + p_e u_e = \vartheta_e, & (x,t) \in e \times (0, T),  \, e \in \mathcal{E},\\
        u_{e'}(v, t) = u_{e''}(v, t), &v \in \mathcal{V}_0, \forall e', e'' \in \mathcal{E}(v), \\
        u_e(v, t) = 0, & v\in \mathcal{V}_b, \, e \in \mathcal{E}(v), \\
        \sum_{e \in \mathcal{E}(v)} a_e(v)\partial_x u_e(v, t) n_e(v) = 0, & v \in \mathcal{V}_0, \\
        u_e(x, 0) = u_e^0(x), & x \in e,
    \end{cases}
\end{align}
for every $t\in(0,T)$, where the coefficients $(a, b, p) \in W^{1,\infty}_{pw}(\mathcal{E}) \times L^{\infty}(\mathcal{E}) \times L^{\infty}(\mathcal{E})$ satisfy \eqref{eq:assumption_coef_a} and \eqref{eq:assumption_coef_b}. Then, the following theorem holds.
\begin{lemma}\label{th:well_posed_heat_graph}
    Let $u^0 \in L^2(\mathcal{E})$ and $\vartheta \in L^2(0, T; L^2(\mathcal{E}))$. Then, \eqref{eq:homo_dirichlet} has a unique solution $u \in C([0, T]; L^2(\mathcal{E}))$. Moreover, if $u^0 \in H_0^1(\mathcal{E})$, then \eqref{eq:homo_dirichlet} has a unique solution $ u \in C([0, T]; H_0^1(\mathcal{E})) \cap H^1(0, T; L^2(\mathcal{E})) \cap L^2(0, T; H^2(\mathcal{E})),$ and there exists a constant $C > 0$ such that the following energy estimate holds:
    \begin{align}\label{eq:u_estimation}
        \|u\|_{H^1(0, T; L^2(\mathcal{E}))} + \|u\|_{L^2(0, T; H^2(\mathcal{E}))} \leq C \left( \|\vartheta\|_{L^2(0, T; L^2(\mathcal{E}))} + \|u^0\|_{H^1_0(\mathcal{E})} \right).
    \end{align}
\end{lemma}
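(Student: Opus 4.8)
The plan is to treat the operator $A u := -\partial_x(a\,\partial_x u) + b\,\partial_x u + p\,u$ on the natural domain
\[
D(A) := \{u \in H^2(\mathcal{E}) : u_{e'}(v) = u_{e''}(v) \text{ for } v\in\mathcal{V}_0, \ u_e(v)=0 \text{ for } v\in\mathcal{V}_b, \ \textstyle\sum_{e\in\mathcal{E}(v)} a_e(v)\partial_x u_e(v) n_e(v) = 0 \text{ for } v\in\mathcal{V}_0\}
\]
and to invoke the abstract semigroup machinery collected in \Cref{appendix1}. First I would set up the sesquilinear form $\mathfrak{a}(u,v) := \int_{\mathcal{E}} a\,\partial_x u\,\overline{\partial_x v} + \int_{\mathcal{E}} b\,(\partial_x u)\,\overline{v} + \int_{\mathcal{E}} p\,u\,\overline{v}$ on $H^1_0(\mathcal{E})\times H^1_0(\mathcal{E})$. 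The ellipticity bound \eqref{eq:assumption_coef_a}, $a \ge a_0 > 0$, together with Young's inequality to absorb the first-order term $b\,\partial_x u$, gives a Gårding inequality $\operatorname{Re}\mathfrak{a}(u,u) \ge \frac{a_0}{2}\|\partial_x u\|_{L^2}^2 - \lambda_0\|u\|_{L^2}^2$ for a suitable $\lambda_0 = \lambda_0(a_0,\|b\|_\infty,\|p\|_\infty)$; continuity of $\mathfrak a$ on $H^1_0(\mathcal E)$ is immediate from the $L^\infty$ bounds on the coefficients. Hence $A + \lambda_0$ is m-accretive (equivalently $\mathfrak a + \lambda_0$ is coercive), so by Hille--Yosida $-A$ generates a strongly continuous contraction-type semigroup on $L^2(\mathcal{E})$, which yields the first claim: for $u^0\in L^2(\mathcal{E})$ and $\vartheta\in L^2(0,T;L^2(\mathcal{E}))$ there is a unique mild/weak solution $u\in C([0,T];L^2(\mathcal{E}))$.

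For the maximal-regularity part I would upgrade accretivity to \emph{analyticity} of the semigroup. The standard route is to check that the numerical range of $\mathfrak a$ lies in a sector: since the antisymmetric part coming from $b\,\partial_x u$ is controlled by the symmetric part (again Young's inequality, $|\operatorname{Im}\mathfrak a(u,u)| \le \|b\|_\infty\|\partial_x u\|_{L^2}\|u\|_{L^2} \lesssim \operatorname{Re}\mathfrak a(u,u) + \|u\|_{L^2}^2$), the form is sectorial, so $-A$ generates an analytic semigroup on $L^2(\mathcal{E})$ (this is the sectorial-form theorem, presumably recorded in \Cref{appendix1}). Analyticity gives, for $u^0$ in the real interpolation space $(D(A),L^2(\mathcal{E}))_{1/2,2}$, a unique solution in $H^1(0,T;L^2(\mathcal{E}))\cap L^2(0,T;D(A))$, with the estimate $\|u\|_{H^1(0,T;L^2)} + \|u\|_{L^2(0,T;D(A))} \le C(\|\vartheta\|_{L^2(0,T;L^2)} + \|u^0\|_{(D(A),L^2)_{1/2,2}})$. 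It remains to identify the interpolation space and the graph norm: using the form characterization $D(A^{1/2}) = [D(A),L^2(\mathcal{E})]_{1/2} = H^1_0(\mathcal{E})$ (Kato's square-root property for the sectorial form, also to be cited from \Cref{appendix1}), the initial-data space is exactly $H^1_0(\mathcal{E})$, and elliptic regularity on the metric graph — solving $-\partial_x(a\partial_x u) = F$ edge by edge with the vertex conditions, which is a finite coupled system of two-point boundary value problems — gives $D(A)\hookrightarrow H^2(\mathcal{E})$ with $\|u\|_{H^2(\mathcal{E})} \le C(\|Au\|_{L^2} + \|u\|_{L^2})$. Combining these yields \eqref{eq:u_estimation} and $u\in C([0,T];H^1_0(\mathcal{E}))$ via the trace/interpolation embedding $H^1(0,T;L^2)\cap L^2(0,T;H^2) \hookrightarrow C([0,T];H^1)$.

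The main obstacle I expect is \textbf{pinning down the correct vertex conditions for $D(A)$ and its interpolation identity when $b$ does not satisfy a Kirchhoff-type condition at vertices} — this is precisely why the hypothesis \eqref{eq:assumption_coef_b}, $\sum_{e\in\mathcal{E}(v)} b_e(v) n_e(v) = 0$ for $v\in\mathcal{V}_0$, appears. Without it, the adjoint form carries a boundary term at interior vertices and the clean identification $D(A^{1/2}) = H^1_0(\mathcal{E})$ (equivalently, the symmetry needed to run Kato's theorem, or at least to keep the vertex conditions in $D(A)$ matching those of the principal part) can fail; with it, integration by parts in the form moves the convection term without generating vertex contributions, so the sectoriality and the square-root identification go through with the \emph{same} natural (continuity + Kirchhoff) vertex conditions as the self-adjoint Laplacian. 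A secondary, more technical point is making the elliptic $H^2$-regularity on the graph fully rigorous at the vertices — one must check that the finite linear system coupling the edge solutions through continuity and the Kirchhoff flux condition is invertible (up to the spectral shift $\lambda_0$), which follows from the coercivity of $\mathfrak a + \lambda_0$ but deserves an explicit argument. Once these are in place, the energy estimate \eqref{eq:u_estimation} is just the quantitative form of maximal regularity plus the interpolation embedding, and the constant depends only on $a_0$, $\|a\|_{W^{1,\infty}_{pw}}$, $\|b\|_\infty$, $\|p\|_\infty$, and $T$.
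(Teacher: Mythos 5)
Your proposal follows essentially the same route as the paper: the paper's proof of this lemma is a direct citation of \Cref{prop:A_deg_generator} (m-accretivity via the G\aa{}rding inequality and Lax--Milgram, then analyticity of the semigroup in the variational/Gelfand-triple framework) together with \Cref{th:maximal_regularity} and \Cref{lemma:fractional_power}, which identify $D_A(1/2,2)=(D(A),L^2(\mathcal{E}))_{1/2}=D(A^{1/2})=H^1_0(\mathcal{E})$ and give the maximal-regularity estimate. The only loose phrase is your blanket appeal to ``Kato's square-root property for the sectorial form,'' which is false for general non-symmetric forms; the paper instead verifies $D(A^*)=D(A)$ --- which is precisely where \eqref{eq:assumption_coef_b} enters, as you correctly anticipated in your ``main obstacle'' paragraph --- and then invokes Lions' theorem to conclude $D(A^{1/2})=D(A^{*1/2})=H^1_0(\mathcal{E})$.
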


\begin{proof}
    Define the linear operator $A: D(A) \subset L^2(\mathcal{E}) \to L^2(\mathcal{E})$ by
    \begin{align}\label{def_A_heat}
    \begin{cases} Au := - \partial_x(a_e \partial_x u_e) + b_e \partial_x u_e + p_e u_e, \quad \text{for every } e \in \mathcal{E}, \\
    D(A) := \left\{ u \in H^1_0(\mathcal{E}) \cap H_{pw}^2(\mathcal{E}) \,\Big|\, \sum_{e \in \mathcal{E}(v)} a_e(v)\partial_x u_e(v) n_e(v) = 0, \, \forall v \in \mathcal{V}_0 \right\}.
        \end{cases}
    \end{align}
    By \Cref{prop:A_deg_generator}, $A$ is the infinitesimal generator of a strongly continuous semigroup on $L^2(\mathcal{E})$. Given $u^0 \in L^2(\mathcal{E})$, $(a, b, p) \in W^{1,\infty}_{pw}(\mathcal{E}) \times L^{\infty}(\mathcal{E}) \times L^{\infty}(\mathcal{E})$ satisfying \eqref{eq:assumption_coef_a}, and $\vartheta \in L^2(0, T; L^2(\mathcal{E}))$, \cite[Proposition 3.1]{Bensoussan} ensures the existence of a unique solution $u$ of \eqref{eq:homo_dirichlet} in $C([0, T]; L^2(\mathcal{E}))$.
    
    Furthermore, due to \Cref{th:maximal_regularity}, we have that if $u^0\in H_0^1(\mathcal{E})$ and the coefficient $b$ satisfies \eqref{eq:assumption_coef_b}, then the unique solution $u$ of \eqref{eq:homo_dirichlet} fulfills
    \begin{align*}
        u \in H^1(0, T; L^2(\mathcal{E})) \cap L^2(0, T; H^2(\mathcal{E})) \cap C([0, T]; H_0^1(\mathcal{E})),
    \end{align*}
    and the energy estimate \eqref{eq:u_estimation} holds, concluding the result.
\end{proof}

\subsection{Well-posedness of the non-homogeneous system}
\begin{proof}[Proof of \Cref{th:well_posedness_nonhomo}]
    Consider a quadratic function $\mu$ defined by $\mu_e(x,t) = \alpha_e(t) x^2 + \beta_e(t) x + \gamma_e(t),$ satisfying the boundary conditions $\mu_e(v,t) = \partial_x \mu_e(v,t) = 0$ for every $v \in \mathcal{V}_0$ and $ \mu_e(v,t) = g_v(t)$ for every $v \in \mathcal{V}_b$. As $g_v \in H^1(0,T)$, choosing $\alpha,\beta,\gamma \in H^1(0,T)$ yields $\mu \in H^1(0,T; H^2(\mathcal{E}))$. Define $u$ to be the solution to the homogeneous problem \eqref{eq:homo_dirichlet} with the modified source term $\vartheta_e := \partial_x(a_e \partial_x \mu_e) - b_e \partial_x \mu_e - p_e \mu_e + f_e,$ and the initial condition $ u^0_e(x) = y^0_e(x) - \mu_e(x,0)$. Since $(a, b, p) \in W^{1,\infty}_{pw}(\mathcal{E}) \times L^{\infty}(\mathcal{E}) \times L^{\infty}(\mathcal{E})$, we have that $\vartheta_e\in L^2(0,T; L^2(\mathcal{E}))$.
    By \Cref{th:well_posed_heat_graph}, there exists a unique solution $u \in C([0,T]; L^2(\mathcal{E}))$ to \eqref{eq:homo_dirichlet}. Moreover, if $y^0 \in H^1(\mathcal{E})$ and the compatibility condition $u^0_e(v) = g_v(0)$ holds for every $v \in \mathcal{V}_b$, then $u$ satisfies
    \begin{align*}
        u \in H^1(0,T; L^2(\mathcal{E})) \cap L^2(0,T; H^2(\mathcal{E})) \cap C([0,T]; H^1(\mathcal{E})),
    \end{align*}
    and the energy estimate
    \begin{align*}
        \|u\|_{H^1(0,T; L^2(\mathcal{E}))} &+ \|u\|_{L^2(0,T; H^2(\mathcal{E}))} \\
        &\leq C \left( \|g\|_{H^1(0,T; l^2(\mathcal{V}_b))} + \|f\|_{L^2(0,T; L^2(\mathcal{E}))} + \|u^0\|_{H^1(\mathcal{E})} \right),
    \end{align*}
    holds, where $C > 0$ is a constant independent of $h$. Define $y = u + \mu$, then $y$ satisfies \eqref{eq:heat1} with the initial condition $y_e(x,0) = u_e(x,0) + \mu_e(x,0) = y^0_e(x)$ and boundary conditions given by $g_v(t)$. For the uniqueness of $y$, let $y_1$ and $y_2$ be two solutions of \eqref{eq:heat1}, then $\tilde u:= y_1-y_2$ is solution of \eqref{eq:homo_dirichlet} with $\vartheta\equiv u_0\equiv 0$. By \eqref{eq:u_estimation}, we deduce the uniqueness. Moreover, using the regularity of $u$ and $\mu$, we obtain
    \begin{align*}
        y \in H^1(0,T; L^2(\mathcal{E})) \cap L^2(0,T; H^2(\mathcal{E})) \cap C([0,T]; H^1(\mathcal{E})).
    \end{align*}
    The energy estimate for $y$ follows from the estimates on $u$ and the properties of $\mu$. Specifically, applying \eqref{eq:u_estimation} and utilizing the continuity of the right-inverse of the trace operator in $H_0^1(\mathcal{E})$ (see \cite[Chapter I, Theorem 8.3]{lions_book}), we obtain
    \begin{align*}
        \|y\|_{H^1(0,T; L^2(\mathcal{E}))} +&\|y\|_{L^2(0,T; H^2(\mathcal{E}))}\leq C \left( \|g\|_{H^1(0,T; l^2(\mathcal{V}_b))} + \|f\|_{L^2(0,T; L^2(\mathcal{E}))} + \|u^0\|_{H^1(\mathcal{E})} \right),
    \end{align*}
    where $C > 0$ is a constant independent of $h$.
\end{proof}

 \subsection{Well-posedness of the continuous RBM}\label{sec:well-posed-random}

Let us prove \Cref{th:wellpossednes_random}.

\begin{proof}[Proof of \Cref{th:wellpossednes_random}]
We prove the well-posedness by induction on $k \in [K]$.

\smallbreak

\noindent
\textbf{Base Case ($k = 1$):} Fix a subgraph $\mathcal{G}^1 = (\mathcal{E}^1, \mathcal{V}^1)$ selected at random. Since $y^0 \in H^1(\mathcal{E})$, it follows that $z^0 \in H^1\bigl((\mathcal{E}^1)^c\bigr)$. Hence, $\xi^1(x,t) = z^0(x)$ is the unique solution of \eqref{eq:random_interval2} on $I_1 \times (\mathcal{E}^1)^c$. Next, fixing  $\xi^1(v,t)$ as boundary data in $(\mathcal{V}_b^1 \cap \mathcal{V}_0)$ and $g_v(t)$ on $(\mathcal{V}_b^1 \cap \mathcal{V}_b)$, \Cref{th:well_posedness_nonhomo} ensures the existence of a unique solution
\begin{align*}
  \eta^1 \in C\bigl(\overline{I_1}; H^1(\mathcal{E}^1)\bigr) 
  \cap
  H^1\bigl(I_1; L^2(\mathcal{E}^1)\bigr)
  \cap
  L^2\bigl(I_1; H^2(\mathcal{E}^1)\bigr)
\end{align*}
to \eqref{eq:random_interval1}. Define
\begin{align*}
  z^1(x,t)
  =
  \begin{cases}
    \eta^1(x,t), & x \in \mathcal{E}^1, \\
    \xi^1(x,t),    & x \in \bigl(\mathcal{E}^1\bigr)^c.
  \end{cases}
\end{align*}
Then $z^1\in 
  C\bigl(\overline{I_1}; H^1(\mathcal{E})\bigr)
  \cap
  H^1\bigl(I_1; L^2(\mathcal{E})\bigr)$ is the unique solution of \eqref{eq:heat1random} on $I_1 \times \mathcal{E}$.

\smallbreak

\noindent
\textbf{Inductive step:} 
For some $n\in [K-1]$, suppose that there is a unique solution
\begin{align*}
  z \in 
  C\bigl(\overline{I_n}; H^1(\mathcal{E})\bigr)
  \cap
  H^1\bigl(I_n; L^2(\mathcal{E})\bigr)
  \cap
  L^2\bigl(I_n; H^1(\mathcal{E})\bigr)
\end{align*}
of \eqref{eq:heat1random} on $I_n \times \mathcal{E}$. We now prove the existence and uniqueness for $n+1$.

Pick a random subgraph $\mathcal{G}^{n+1} = (\mathcal{E}^{n+1}, \mathcal{V}^{n+1})$. Let $z^n(x,t_n) \in H^1(\mathcal{E})$ be the initial condition for \eqref{eq:random_interval1} and \eqref{eq:random_interval2} in their respective subgraphs. Then, $ \xi^{n+1}(x,t) = z^n(x,t_n)$ is the unique solution of \eqref{eq:random_interval2} in $I_{n+1} \times (\mathcal{E}^{n+1})^c$. Considering $\xi^{n+1}(v,t)$ as boundary data in $ \mathcal{V}_b^{n+1} \cap \mathcal{V}_0$ and $g_v(t)$ on $\mathcal{V}_b^{n+1} \cap \mathcal{V}_b$, \Cref{th:well_posedness_nonhomo} guarantees the existence of unique solution of \eqref{eq:random_interval1} in the class
\begin{align*}
  \eta^{n+1} 
  \in 
  C\bigl(\overline{I_{n+1}}; H^1(\mathcal{E}^{n+1})\bigr)
  \cap
  H^1\bigl(I_{n+1}; L^2(\mathcal{E}^{n+1})\bigr)
  \cap
  L^2\bigl(I_{n+1}; H^2(\mathcal{E}^{n+1})\bigr).
\end{align*}
Define
\begin{align*}
  z^{n+1}(x,t)
  =
  \begin{cases}
    \eta^{n+1}(x,t), & x \in \mathcal{E}^{n+1}, \\
    \xi^{n+1}(x,t),    & x \in \bigl(\mathcal{E}^{n+1}\bigr)^c.
  \end{cases}
\end{align*}
Then $z^{n+1}$ is the unique solution of \eqref{eq:heat1random} on $I_{n+1} \times \mathcal{E}$. By \Cref{th:well_posedness_nonhomo},
\begin{align*}
  z^{n+1} 
  \in 
  C\bigl(\overline{I_{n+1}}; H^1(\mathcal{E})\bigr)
  \cap
  H^1\bigl(I_{n+1}; L^2(\mathcal{E})\bigr).
\end{align*}
This completes the induction step. Thus, by induction, for each $k \in [K]$, there is a unique solution of \eqref{eq:heat1random} on $I_k \times \mathcal{E}$. Consequently, $ z(x,t) = z^{k_t}(x,t),$ corresponds to the unique solution of \eqref{eq:heat1random} in $(0,T) \times \mathcal{E}$. Furthermore, $z 
  \in
  C\bigl([0,T]; H^1(\mathcal{E})\bigr)
  \cap
  H^1\bigl(0,T; L^2(\mathcal{E})\bigr),$
and satisfies $z \in L^2\bigl(I_k; H^2(\mathcal{E}^k)\bigr)$ for each $k \in [K]$.  
\end{proof}

\section{Convergence of the random scheme}\label{sec:convergence}

In this section, we prove the convergence in expectation of the continuous RBM. We first show the following lemma.

\begin{lemma}\label{lemma:lambda_bounded}
Let $\Lambda$ be defined as in \eqref{eq:def_lambda}. Then, under the assumptions of \Cref{th:well_posedness_nonhomo}, we have $\Lambda \in L^1(0,T)$ and
    \begin{align*}
        \|\Lambda\|_{L^1(0, T)} \leq C \left( \|g\|_{H^1(0, T; l^2(\mathcal{V}_b))}^2 + \|f\|_{L^2(0, T; L^2(\mathcal{E}))}^2 + \|y^0\|_{H^1(\mathcal{E})}^2 \right),
    \end{align*}
    where $C > 0$ depends on $(a,b,p)$, but it is independent of $h$.
\end{lemma}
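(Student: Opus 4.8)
The plan is to control each of the four terms in the definition \eqref{eq:def_lambda} of $\Lambda(t)$ by an $h$-independent multiple of a norm of the exact solution $y$ and the data, then integrate in time and invoke the maximal-regularity estimate \eqref{eq:energy_estimation_y_wellposedness} from \Cref{th:well_posedness_nonhomo}. The starting point is the observation that, for each of the localization functions, the pointwise bound $|\zeta_\psi^{\omega_k}(x)| \le \psi(x)/\pi_{\min}$ holds with $\pi_{\min} := \min_{i\in[M]}\pi_i > 0$, since on each edge $\zeta_\psi^{\omega_k}$ is a sub-sum of $\sum_i \chi_\psi^{i,\omega_k}/\pi_i$ and the $\chi$'s have disjoint supports up to the vertex set. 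Hence $|a - \zeta_a| \le (1 + 1/\pi_{\min})\|a\|_{L^\infty}$ a.e., and similarly for $b$, $p$ and the constant $1$; also $\zeta_a \in W^{1,\infty}_{pw}$ with $\|\partial_x \zeta_a\|_{L^\infty}$ bounded by $\|a\|_{W^{1,\infty}_{pw}}/\pi_{\min}$, because differentiation acts edgewise and does not see the vertex jumps.

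With these uniform $L^\infty$ (resp. $W^{1,\infty}_{pw}$) bounds in hand, I would estimate the integrand pointwise in $t$, before taking expectations. For the first term, expand $\partial_x((a-\zeta_a)\partial_x y) = (a-\zeta_a)\partial_{xx} y + \partial_x(a-\zeta_a)\partial_x y$ on each edge and bound it by $C(\|a\|_{W^{1,\infty}_{pw}},\pi_{\min})\bigl(\|\partial_{xx} y\|_{L^2(\mathcal{E})} + \|\partial_x y\|_{L^2(\mathcal{E})}\bigr)$; the second and third terms are bounded by $C\|\partial_x y\|_{L^2(\mathcal{E})}$ and $C\|y\|_{L^2(\mathcal{E})}$ respectively; the last term by $C\|f(t)\|_{L^2(\mathcal{E})}$. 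None of these constants depends on $h$ or on the realization $\omega_k$, so taking expectations is harmless and yields
\begin{align*}
\Lambda(t) \le C\Bigl(\|y(t)\|_{H^2(\mathcal{E})}^2 + \|f(t)\|_{L^2(\mathcal{E})}^2\Bigr),
\end{align*}
which already shows $\Lambda(t)$ is finite for a.e. $t$ since $y \in L^2(0,T;H^2(\mathcal{E}))$ and $f \in L^2(0,T;L^2(\mathcal{E}))$ under the hypotheses of \Cref{th:well_posedness_nonhomo}.

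Integrating this inequality over $(0,T)$ gives $\|\Lambda\|_{L^1(0,T)} \le C\bigl(\|y\|_{L^2(0,T;H^2(\mathcal{E}))}^2 + \|f\|_{L^2(0,T;L^2(\mathcal{E}))}^2\bigr)$, and then the maximal-regularity bound \eqref{eq:energy_estimation_y_wellposedness} controls $\|y\|_{L^2(0,T;H^2(\mathcal{E}))}$ (indeed $\|y\|_{H^1(0,T;L^2(\mathcal{E}))}+\|y\|_{L^2(0,T;H^2(\mathcal{E}))}$) by $C(\|g\|_{H^1(0,T;l^2(\mathcal{V}_b))} + \|f\|_{L^2(0,T;L^2(\mathcal{E}))} + \|y^0\|_{H^1(\mathcal{E})})$; squaring and absorbing cross terms delivers the claimed estimate, with $C$ depending only on $(a,b,p)$, $T$ and the decomposition (through $\pi_{\min}$), but not on $h$. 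The main technical point to be careful about — rather than a genuine obstacle — is justifying the uniform-in-$h$, uniform-in-$\omega$ bounds on $\zeta_a,\zeta_b,\zeta_p,\zeta_1$ and in particular on $\partial_x\zeta_a$: one has to check that the normalization constants $\pi_i$ are fixed positive numbers determined by the batches and probabilities (independent of $K$ and hence of $h$), and that the edgewise derivative of the indicator-truncated coefficient introduces no boundary Dirac masses because $S^i_j$ is a union of whole edges together with some vertices, so $\mathbf 1_{S^i_j}$ is locally constant on the interior of every edge. Once that is settled, the rest is a routine chain of Cauchy–Schwarz and triangle inequalities.
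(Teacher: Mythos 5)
Your proposal is correct and follows essentially the same route as the paper: bound each of the four terms of $\Lambda$ by an $h$- and $\omega$-independent constant times $\|y(t)\|_{H^2(\mathcal{E})}^2$ (plus $\|f(t)\|_{L^2(\mathcal{E})}^2$), using that $\operatorname{supp}\zeta_\psi$ consists of whole edges so no vertex singularities arise in $\partial_x\zeta_a$, then integrate in time and invoke the maximal-regularity estimate \eqref{eq:energy_estimation_y_wellposedness}. The only cosmetic difference is that the paper works with the quantity $\mathbb{E}\bigl[\lvert 1-\sum_{i\in\mathcal B_{\omega_k}}\mathbf 1_{S^i_{\omega_k}}/\pi_i\rvert^2\bigr]\le 1/\pi_\ast-1$ instead of your cruder worst-case pointwise bound $(1+1/\pi_{\min})^2$; both yield constants independent of $h$ and of the realization.
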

\begin{proof}[Proof of \Cref{lemma:lambda_bounded}]
We observe that for each $k\in [K]$ and $t\in I_k$ we have that
\begin{align*}
&\mathbb{E}\left[\|\partial_{x}((a - \zeta_a) \partial_x y)\|_{L^2(\mathcal{E})}^2\right]  = \mathbb{E}\left[\|\partial_{x}((a - \zeta_a^k) \partial_x y)\|_{L^2(\mathcal{E}^k)}^2+\|\partial_{x}( a \,\partial_x y)\|_{L^2((\mathcal{E}^k)^c)}^2\right]\\
&\hspace{0.5cm} \leq \mathbb{E}\left[\left\|\partial_{x}\left(\left(a - \left[\sum_{i\in \mathcal{B}_{\omega_k}}\frac{a\mathbf 1_{S^{i}_{\omega_k}}}{\pi_i}\right]\right) \partial_x y\right)\right\|_{L^2(\mathcal{E}^k)}^2\right]+\|\partial_{x}a \partial_x y + a\partial_{xx}y\|_{L^2((\mathcal{E}^k)^c)}^2.
\end{align*}
Denote by $ \rho_k := 1 - \sum_{i\in \mathcal{B}_{\omega_k}}\mathbf 1_{S^{i}_{\omega_k}}/\pi_i$ the piece-wise function on space. Then, applying H\"older's inequality we have 
\begin{align*}
\mathbb{E}\left[\|\partial_{x}((a - \zeta_a) \partial_x y)\|_{L^2(\mathcal{E})}^2\right] &\leq \mathbb{E}\left[\|\rho_k\partial_{x}a \partial_x y+\rho_ka\partial_{xx}y\|_{L^2(\mathcal{E}^k)}^2\right]+\mathbb{E}\left[\|\partial_{x}a \partial_x y + a\partial_{xx}y\|_{L^2((\mathcal{E}^k)^c)}^2\right]\\
    &\leq2\mathbb{E}[|\rho_k|^2]\left(\|\partial_x a\|_{L^\infty(\mathcal{E}^k)}^2\|\partial_x y \|_{L^2(\mathcal{E}^k)}^2 +\|a\|_{L^\infty(\mathcal{E}^k)}^2\|\partial_{xx}y\|^2_{L^2(\mathcal{E}^k)} \right)\\
    &\hspace{2cm}\qquad+ 2\left(\|\partial_x a\|_{L^\infty(\mathcal{E})}^2\|\partial_x y\|_{L^2(\mathcal{E})}^2+ \| a\|_{L^\infty(\mathcal{E})}^2\|\partial_{xx} y\|_{L^2(\mathcal{E})}^2\right)\\
    &\leq 2\max\{\mathbb{E}[|\rho_k|^2],1\}\left( \|a\|_{W^{1,\infty}(\mathcal{E})}^2\|y \|_{H^2(\mathcal{E})}^2\right).
\end{align*}
 for $t\in I_k$. On the other hand, let $\pi_\ast:=\min_i\pi_i>0$, then for each $e\in \mathcal{E}^k$ we have that $\mathbb{E}[|\rho_k|^2]\leq \frac{1}{\pi_\ast}-1 =:C_{\pi_\ast}$ for every $k$. Therefore, the same arguments lead us to
\begin{align*}
    \|\Lambda\|_{L^1(0,T)}\leq  2\max\{C_{\pi_\ast},1\}\left(\|a\|_{W^{1,\infty}(\mathcal{E})}^2+\|b\|_{L^{\infty}(\mathcal{E})}^2+\|p\|_{L^{\infty}(\mathcal{E})}^2\right)\|y\|^2_{L^2(0,T;H^2(\mathcal{E}))}.
\end{align*}
Since \Cref{th:well_posedness_nonhomo} holds, we conclude by applying \eqref{eq:energy_estimation_y_wellposedness}.
\end{proof}
Now, we continue with the convergence proof. 
\begin{proof}[Proof of  \Cref{th:convergence}]
    Let $y$ and $z$ denote the solutions of \eqref{eq:heat1} and \eqref{eq:heat1random}, respectively. Fix an arbitrary index $k \in [K]$. For every $t \in I_k$, we have
    \begin{align}\label{eq:ineq1}
      \langle \partial_t (y - z), y - z \rangle_{L^2(\mathcal{E})}
      &= \left\langle
           \partial_x(a\partial_x y)
           - \partial_x(\zeta_a\partial_x z),
           y - z
         \right\rangle_{L^2(\mathcal{E})} +
         \left\langle
           b\partial_x y
           - \zeta_b \partial_x z,
           y - z
         \right\rangle_{L^2(\mathcal{E})}\\
         &\qquad+
         \left\langle
           py
           - \zeta_pz,
           y - z
         \right\rangle_{L^2(\mathcal{E})}
         +
         \left\langle
           (\zeta_1 - 1) f,
           y - z
         \right\rangle_{L^2(\mathcal{E})}
      \nonumber \\
      &=: P_1 + P_2 + P_3 + P_4.
    \end{align}

    In the following, we split the proof into three steps. First, we estimate $P_1$, and then,the terms $P_2, P_3,$ and $P_4$. Finally, we will conclude the proof by using the regularity of $z$ on time and applying Gr\"onwall's inequality. 
\smallbreak
    
    \noindent
    {\bf Step 1. $P_1$ estimation. } For simplicity, we denote by $\zeta_\psi = \zeta_\psi^k = \zeta^{\omega_k}_\psi$ for every $t\in I_k$. Now, for $P_1$, integrating by parts yields
    \begin{align*}
      P_1 
      &= 
      \big\langle 
        \partial_x(\zeta_a\partial_x(y - z)), 
        y - z
      \big\rangle_{L^2(\mathcal{E})}
      +
      \big\langle
        \partial_x\bigl((a - \zeta_a)\partial_x y\bigr),
        y - z
      \big\rangle_{L^2(\mathcal{E})}
      \nonumber \\
      &=
      \big\langle 
        \partial_x(\zeta_a\partial_x(y - z)), 
        y - z
      \big\rangle_{L^2(\mathcal{E})}
      +
      \big\langle
        \partial_x\bigl((a - \zeta_a)\partial_x y\bigr),
        y - z(t_{k-1})
      \big\rangle_{L^2(\mathcal{E})}
      \nonumber \\
      &\quad
      +
      \big\langle
        \partial_x\bigl((a - \zeta_a)\partial_x y\bigr),
        z(t_{k-1}) - z
      \big\rangle_{L^2(\mathcal{E})}
      \nonumber \\
      &=
      -\bigl\|\sqrt{\zeta_a}\partial_x(y - z)\bigr\|_{L^2(\mathcal{E}^k)}^2
      +
      \sum_{v \in \mathcal{V}}
      \sum_{e \in \mathcal{E}(v)}
         \zeta_a(v)\,\bigl(y_e - z_e\bigr)(v)\partial_x(y_e - z_e)(v)n_e(v)
      \nonumber \\
      &\quad
      +
      \big\langle
        \partial_x\bigl((a - \zeta_a)\partial_x y\bigr),
        y - z(t_{k-1})
      \big\rangle_{L^2(\mathcal{E})}
      +
      \big\langle
        \partial_x\bigl((a - \zeta_a)\partial_x y\bigr),
        z(t_{k-1}) - z
      \big\rangle_{L^2(\mathcal{E})}.
    \end{align*}

    \noindent
   Let us analyze the boundary term.  Since $\supp(\zeta_a) \subset \mathcal{G}^k$ on $[t_{k-1}, t_k]$, we have that 
\begin{align}\label{eq:support_boundary}
     \sum_{v \in \mathcal{V}}
      \sum_{e \in \mathcal{E}(v)}
         \Psi_e(v) =  \sum_{v \in \mathcal{V}^k}
      \sum_{e \in \mathcal{E}(v)}
         \Psi_e(v)
\end{align}
where $\Psi_e(v) = \zeta_a(v)\,\bigl(y_e - z_e\bigr)(v)\partial_x(y_e - z_e)(v)\,n_e(v)$, and  $\mathcal{V}^k$ is the set of vertices of $\mathcal{G}^k$. Observe that the set of vertices $\mathcal{V}^k$ can be decomposed as 
    \begin{align}\label{eq:decompose_boundary}
      \mathcal{V}^k
      =
      \bigl(\mathcal{V}_0^k\bigr)
      \cup
      \bigl(\mathcal{V}_b^k \,\cap\, \mathcal{V}_b\bigr)
         \,\cup\,
         \bigl(\mathcal{V}_b^k \,\cap\, \mathcal{V}_0\bigl),
    \end{align}
    see \Cref{fig:differnt_nodes}. Therefore, since \eqref{eq:support_boundary} and \eqref{eq:decompose_boundary}, we deduce that
\begin{align}\label{eq:decompostion_sum_boundary}
      \sum_{v \in \mathcal{V}^k}
      \sum_{e \in \mathcal{E}(v)}
         \Psi_e(v) =\sum_{v \in \mathcal{V}^k_0}
      \sum_{e \in \mathcal{E}(v)}
          \Psi_e(v)+\hspace{-0.4cm}\sum_{v \in \mathcal{V}_b^k \cap \mathcal{V}_b}
      \sum_{e \in \mathcal{E}(v)}\Psi_e(v)+\hspace{-0.4cm}\sum_{v \in \mathcal{V}_b^k \cap \mathcal{V}_0}
      \sum_{e \in \mathcal{E}(v)}
          \Psi_e(v).
    \end{align}
Now, by definition of $\zeta_a$, we have that
\begin{align}\label{eq:zeta_a_zero}
\zeta_a(v) = 0, \quad \text{for every}\quad v \in \bigl(\mathcal{V}_b^k \cap \mathcal{V}_0\bigr).
\end{align}
Moreover, by construction 
\begin{align}\label{eq:equal_boundary}
z_e(v,t) = y_e(v,t) = g_v(t), \quad \text{for every}\quad v \in \bigl(\mathcal{V}_b^k \cap \mathcal{V}_b\bigr).
\end{align}
Finally, it follows from \eqref{eq:decompostion_sum_boundary}, \eqref{eq:zeta_a_zero} and \eqref{eq:equal_boundary} that
\begin{align*}
\sum_{v \in \mathcal{V}}\sum_{e \in \mathcal{E}(v)}
   \Psi_e(v) &=\sum_{v \in \mathcal{V}^k_0}
   (y - z)(v)
   \bigg(
     \sum_{e \in \mathcal{E}(v)} \zeta_a(v)\partial_x y_e(v)n_e(v)-\zeta_a(v)\partial_x z_e(v)n_e(v)
   \bigg)\\
   &= \sum_{v \in \mathcal{V}^k_0}
   (y - z)(v)
     \sum_{e \in \mathcal{E}(v)} \zeta_a(v)\partial_x y_e(v)n_e(v)
   \\
   &=\sum_{v \in \mathcal{V}^k_0}
   (y - z)(v)
     \sum_{e \in \mathcal{E}(v)} \sum_{i\in \mathcal{B}_{\omega_k}}\frac{1_{S_{\omega_k}^i}(v)a_e(v)}{\pi_i}\partial_x y_e(v)n_e(v)
     \\
   &=\sum_{v \in \mathcal{V}^k_0}
   (y - z)(v)
      \sum_{i\in \mathcal{B}_{\omega_k}}\frac{1_{S_{\omega_k}^i}(v)}{\pi_i}\sum_{e \in \mathcal{E}(v)}a_e(v)\partial_x y_e(v)n_e(v) = 0,
\end{align*}
where we have used that $y,z \in H^1(\mathcal{E}^k)$ and both satisfy Kirchhoff's condition on $\mathcal{V}_{0}^k$. 
Consequently, $P_1$ becomes
\begin{align}\label{eq:last_relation_p1}
P_1
&=
-\Bigl\|\sqrt{\zeta_a}\partial_x (y - z)\Bigr\|_{L^2(\mathcal{E}^k)}^2
+
\bigl\langle
  \partial_x\bigl((a - \zeta_a)\partial_x y\bigr),
  y - z(t_{k-1})
\bigr\rangle_{L^2(\mathcal{E})}
\nonumber \\
&\hspace{4.2cm}
+
\bigl\langle
  \partial_x\bigl((a - \zeta_a)\partial_x y\bigr),
  z(t_{k-1}) - z
\bigr\rangle_{L^2(\mathcal{E})}.
\end{align}

\noindent
{\bf Step 2. $P_2$, $P_3$ and $P_4$ estimation. } Since for every $t\in (t_{k-1},t_k)$ we have $\supp(\zeta_b)\subset \mathcal{E}^k $, for $P_2$ we obtain that
\begin{align}\label{eq_inep2}
P_2
&=
\bigl\langle
  \zeta_b\partial_x (y - z),
  y - z
\bigr\rangle_{L^2(\mathcal{E})}
+
\bigl\langle
  (b - \zeta_b)\partial_x y,
  y - z
\bigr\rangle_{L^2(\mathcal{E})}
\nonumber \\
&=
\bigl\langle
  \zeta_b\partial_x (y - z),
  y - z
\bigr\rangle_{L^2(\mathcal{E}^k)}
+
\bigl\langle
  (b - \zeta_b)\partial_x y,
  y - z(t_{k-1})
\bigr\rangle_{L^2(\mathcal{E})}
\nonumber \\
&\hspace{4.4cm}
+
\bigl\langle
  (b - \zeta_b)\partial_x y,
  z(t_{k-1}) - z
\bigr\rangle_{L^2(\mathcal{E})}.
\end{align}
Since $a$ satisfies \eqref{eq:assumption_coef_a}, $ \zeta_a^k>0$ in $\mathcal{E}^k$. Thus, using  Hölder’s inequality in \eqref{eq_inep2} yields
\begin{align}\label{eq:ineqp22}
\nonumber P_2
\leq
\Bigl\|\sqrt{\zeta_a}\partial_x (y - z)\Bigr\|_{L^2(\mathcal{E}^k)}
\Bigl\|\tfrac{\zeta_b}{\sqrt{\zeta_a}}(y - z)\Bigr\|_{L^2(\mathcal{E}^k)}
&+
\bigl\langle
  (b - \zeta_b)\,\partial_x y,
  y - z(t_{k-1})
\bigr\rangle_{L^2(\mathcal{E})}
 \\
&
+
\bigl\langle
  (b - \zeta_b)\partial_x y,
  z(t_{k-1}) - z
\bigr\rangle_{L^2(\mathcal{E})}.
\end{align}
Now, applying Young’s inequality to \eqref{eq:ineqp22} we get
\begin{align}\label{eq:last_P2}
P_2
&\le
\tfrac12
\Bigl\|\sqrt{\zeta_a}\partial_x (y - z)\Bigr\|_{L^2(\mathcal{E}^k)}^2
+
\tfrac12
\Bigl\|\tfrac{\zeta_b}{\sqrt{\zeta_a}}\Bigr\|_{L^\infty(\mathcal{E}^k)}^2
\Bigl\|y - z\Bigr\|_{L^2(\mathcal{E})}^2
\nonumber \\
&\quad
+
\bigl\langle
  (b - \zeta_b)\partial_x y,
  y - z(t_{k-1})
\bigr\rangle_{L^2(\mathcal{E})}
+
\bigl\langle
  (b - \zeta_b)\partial_x y,
  z(t_{k-1}) - z
\bigr\rangle_{L^2(\mathcal{E})}.
\end{align}
Finally, we estimate $P_3 + P_4$; by applying Hölder’s inequality, we get
\begin{align}\label{eq:last_P34}
P_3 + P_4
&=
\bigl\langle
  \zeta_p(y - z),
  \,y - z
\bigr\rangle_{L^2(\mathcal{E})}
+
\bigl\langle
  (p - \zeta_p)y,
  y - z
\bigr\rangle_{L^2(\mathcal{E})}
+
\bigl\langle
  (\zeta_1 - 1)f,
  y - z
\bigr\rangle_{L^2(\mathcal{E})}
\nonumber \\
&\le
\|\zeta_p\|_{L^\infty(\mathcal{E}^k)}\|y - z\|_{L^2(\mathcal{E}^k)}^2
+
\bigl\langle
  (p - \zeta_p)y,
  y - z(t_{k-1})
\bigr\rangle_{L^2(\mathcal{E})}
\nonumber \\
&\quad
+
\bigl\langle
  (p - \zeta_p)y,
  \,z(t_{k-1}) - z
\bigr\rangle_{L^2(\mathcal{E})}
+
\bigl\langle
  (\zeta_1 - 1)f,
  y - z(t_{k-1})
\bigr\rangle_{L^2(\mathcal{E})}
\nonumber \\
&\quad
+
\bigl\langle
  (\zeta_1 - 1)f,
  z(t_{k-1}) - z
\bigr\rangle_{L^2(\mathcal{E})}.
\end{align}
Before proceeding, note that $z(t_{k-1})$ depends on the realization of the random variable $\omega_{k-1}$. By the independence of the sequence $\{\omega_k\}_{k=1}^K$, the value $z(t_{k-1})$ is independent of $\zeta_\psi^k$, which is defined through $\omega_k$. Thus, for any functions $\psi, \rho, \phi: \mathcal{G} \to \mathbb{R}$, we have
\begin{align}\label{eq:relation_Expectation}
\nonumber
\mathbb{E}\Bigl[\langle (\zeta_\psi - \psi)\,\phi,\,\rho - z(t_{k-1})\rangle_{L^2(\mathcal{E})}\Bigr]
&=
\Bigl\langle
  \mathbb{E}\bigl[(\zeta_\psi - \psi)\,\phi\bigr],
  \,\mathbb{E}\bigl[\rho - z(t_{k-1})\bigr]
\Bigr\rangle_{L^2(\mathcal{E})}
\\
&=
\Bigl\langle
  \mathbb{E}\bigl[\zeta_\psi - \psi\bigr]\,\phi,
  \,\mathbb{E}\bigl[\rho - z(t_{k-1})\bigr]
\Bigr\rangle_{L^2(\mathcal{E})}
= 0.
\end{align}
Now, taking expectation in \eqref{eq:relation_Expectation} and substituting \eqref{eq:last_relation_p1}, \eqref{eq_inep2}, \eqref{eq:last_P2}, and \eqref{eq:last_P34}, some terms vanish due to \eqref{eq:relation_Expectation}, we obtain
\begin{align}\label{eq:fist_after_expectation}
\nonumber\frac{1}{2}\,\partial_t \Bigl(\mathbb{E}\bigl[\|\,y - z\|_{L^2(\mathcal{E})}^2\bigr]\Bigr)&
+
\frac{1}{2}
\mathbb{E}\Bigl[\bigl\|\sqrt{\zeta_a}\partial_x(y - z)\bigr\|_{L^2(\mathcal{E}^k)}^2\Bigr]
\\
\nonumber&\hspace{-1.9cm}\le
\mathbb{E}\bigl[\langle \partial_x\bigl((a - \zeta_a)\partial_x y\bigr),z(t_{k-1}) - z\rangle_{L^2(\mathcal{E})}\bigr]
+
\mathbb{E}\bigl[\bigl(C_{\zeta_a,\zeta_b} + C_p\bigr)\|y - z\|_{L^2(\mathcal{E})}^2\bigr]
\\
\nonumber&\hspace{-1.9cm}\quad
+
\mathbb{E}\bigl[\langle (b - \zeta_b)\partial_x y,\,z(t_{k-1}) - z\rangle_{L^2(\mathcal{E})}\bigr]
+
\mathbb{E}\bigl[\langle (p - \zeta_p)y,z(t_{k-1}) - z\rangle_{L^2(\mathcal{E})}\bigr]
\\
&\hspace{-1.9cm}\quad
+
\mathbb{E}\bigl[\langle (\zeta_1 - 1)f,z(t_{k-1}) - z\rangle_{L^2(\mathcal{E})}\bigr],
\end{align}
where $C_{\zeta_a,\zeta_b} = \bigl\|\tfrac{\zeta_b}{\sqrt{\zeta_a}}\bigr\|_{L^\infty(\mathcal{E}^k)}^2$ and $C_p = \|p\|_{L^\infty(\mathcal{E})}^2/\pi_\ast$ with $\pi_\ast:=\min_i\pi_i>0$. Then, due to the Cauchy-Schwarz inequality and the definition of $\zeta_\psi$, we have that
\begin{align*}
C_{\zeta_a,\zeta_b} &= \left\| \frac{\displaystyle\sum_{i\in \mathcal{B}_{\omega_k}}{ \frac{\chi^{i,\omega_k}_b}{\pi_i}}}{\left(\displaystyle\sum_{i\in \mathcal{B}_{\omega_k}}{ \frac{\chi^{i,\omega_k}_a}{\pi_i}}\right)^{\frac{1}{2}}}\right\|_{L^\infty(\mathcal{E}^k)}^2\hspace{-0.3cm}\leq \hspace{0.1cm} \left\| \frac{\left(\displaystyle\sum_{i\in \mathcal{B}_{\omega_k}}{ \frac{(\chi^{i,\omega_k}_b)^2}{\chi^{i,\omega_k}_a\pi_i}}\right)^{\frac{1}{2}}\left(\displaystyle\sum_{i\in \mathcal{B}_{\omega_k}}{ \frac{\chi^{i,\omega_k}_a}{\pi_i}}\right)^{\frac{1}{2}}}{\left(\displaystyle\sum_{i\in \mathcal{B}_{\omega_k}}{ \frac{\chi^{i,\omega_k}_a}{\pi_i}}\right)^{\frac{1}{2}}}\right\|_{L^\infty(\mathcal{E}^k)}^2\\
&\leq \frac{1}{\pi_\ast}\left\|\frac{b^2}{a} \right\|_{L^\infty(\mathcal{E}^k)}\leq \frac{\|b\|_{L^\infty(\mathcal{E})}^2}{\pi_\ast a_0}=: C_{a_0,b},
\end{align*}
being $a_0>0$ the ellipticity constant from \eqref{eq:assumption_coef_a}. Now, using Hölder’s inequality in \eqref{eq:fist_after_expectation} for the $L^2(\mathcal{E})$ product and for the expectation, we get
\begin{align}\label{eq:simplify_relation}
\partial_t \Bigl(\mathbb{E}\bigl[\|y - z\|_{L^2(\mathcal{E})}^2\bigr]\Bigr)
\le
C\mathbb{E}\bigl[\|y - z\|_{L^2(\mathcal{E})}^2\bigr]
+
4\Lambda^{\frac{1}{2}}(t)\mathbb{E}\bigl[\|z(t_{k-1}) - z\|_{L^2(\mathcal{E})}^2\bigr]^{\frac{1}{2}},
\end{align}
where $C = 2(C_{a_0,b} + C_p)$, and $\Lambda$ is defined in \eqref{eq:def_lambda}. 
\smallbreak
\noindent
{\bf Step 3. Error composition.}  Integrating \eqref{eq:simplify_relation} over $(t_{k-1}, t)$ gives
\begin{align}\label{eq:ine_from_k}
\nonumber
\mathbb{E}\bigl[\|y(t) - z(t)\|_{L^2(\mathcal{E})}^2\bigr]&
\le
\mathbb{E}\bigl[\|y(t_{k-1}) - z(t_{k-1})\|_{L^2(\mathcal{E})}^2\bigr]\\
\nonumber
&\hspace{-3cm}+
C \int_{t_{k-1}}^{t}
\mathbb{E}\bigl[\|y(s) - z(s)\|_{L^2(\mathcal{E})}^2\bigr]\,
ds
+ 4 \int_{t_{k-1}}^{t}
\Lambda^{\frac{1}{2}}(s)\,\mathbb{E}\Bigl[\|z(t_{k-1}) - z(s)\|_{L^2(\mathcal{E})}^2\Bigr]^{\frac{1}{2}}ds
\\
\nonumber&\hspace{-2cm}\le
\mathbb{E}\bigl[\|y(t_{k-1}) - z(t_{k-1})\|_{L^2(\mathcal{E})}^2\bigr]
+
C \int_{t_{k-1}}^{t}
\mathbb{E}\bigl[\|y(s) - z(s)\|_{L^2(\mathcal{E})}^2\bigr]
ds
\\
&\hspace{-1.0cm}\quad
+ 4 \Bigl(\int_{t_{k-1}}^{t}\Lambda(s)ds\Bigr)^{\frac{1}{2}}
\Bigl(\int_{t_{k-1}}^{t}
\mathbb{E}\bigl[\|z(t_{k-1}) - z(s)\|_{L^2(\mathcal{E})}^2\bigr]ds\Bigr)^{\frac{1}{2}}.
\end{align}
Now, composing the estimate \eqref{eq:ine_from_k} for every $k\in [k_t]$ yields
\begin{align*}
&\mathbb{E}\bigl[\|y(t) - z(t)\|_{L^2(\mathcal{E})}^2\bigr]
\le
\mathbb{E}\bigl[\|y(0) - z(0)\|_{L^2(\mathcal{E})}^2\bigr]
+
C_{a,b,p} \int_{0}^{t}
\mathbb{E}\bigl[\|y(s) - z(s)\|_{L^2(\mathcal{E})}^2\bigr]
ds
\\
&\hspace{2.5cm}\quad
+4 \sum_{k=1}^{k_t - 1}
\Bigl(\int_{t_{k-1}}^{t_{k}}\Lambda(s)\,ds\Bigr)^{\frac{1}{2}}
\Bigl(\int_{t_{k-1}}^{t_{k}}
\mathbb{E}\bigl[\|z(t_{k-1}) - z(s)\|_{L^2(\mathcal{E})}^2\bigr]
ds\Bigr)^{\!\frac{1}{2}}
\\
&\hspace{2.5cm}\quad
+
\Bigl(\int_{t_{k_t - 1}}^{t}\Lambda(s)\,ds\Bigr)^{\frac{1}{2}}
\Bigl(\int_{t_{k_t - 1}}^{t}
\mathbb{E}\bigl[\|z(t_{k_t - 1}) - z(s)\|_{L^2(\mathcal{E})}^2\bigr]
ds\Bigr)^{\!\frac{1}{2}}.
\end{align*}
Since $y(0) = z(0)$, applying the integral Gr\"onwall lemma (see \cite[Appendix B.k]{evans_book}) gives
\begin{align*}
&\mathbb{E}\bigl[\|y(t) - z(t)\|_{L^2(\mathcal{E})}^2\bigr]\le \Bigl(\int_{t_{k_t - 1}}^{t}\Lambda(s)ds\Bigr)^{\frac{1}{2}}
   \Bigl(\int_{t_{k_t - 1}}^{t}
     \mathbb{E}\bigl[\|z(t_{k_t - 1}) - z(s)\|_{L^2(\mathcal{E})}^2\bigr]
     ds\Bigr)^{\frac{1}{2}}\\
     &\hspace{2cm}+
4e^{Ct}\Biggl[
  \sum_{k=1}^{k_t - 1}
    \Bigl(\int_{t_{k-1}}^{t_{k}}\Lambda(s)ds\Bigr)^{\frac{1}{2}}
    \Bigl(\int_{t_{k-1}}^{t_{k}}
      \mathbb{E}\bigl[\|z(t_{k-1}) - z(s)\|_{L^2(\mathcal{E})}^2\bigr]
      ds\Bigr)^{\frac{1}{2}}
\Biggr].
\end{align*}
Applying the Cauchy-Schwarz inequality to the sum yields
\begin{align}\label{eq:after_gronwall}
\nonumber
\mathbb{E}\bigl[\|\,y(t) - z(t)\|_{L^2(\mathcal{E})}^2\bigr]
&\le
4e^{Ct}
\Biggl[
  \Bigl(\int_{0}^{t}\!\Lambda(s)ds\Bigr)^{\frac{1}{2}}
  \Biggl(
    \sum_{k=1}^{k_t - 1}
      \int_{t_{k-1}}^{t_{k}}
         \mathbb{E}\bigl[\|z(t_{k-1}) - z(s)\|_{L^2(\mathcal{E})}^2\bigr]\,
      ds
\\
&\quad
\left.
    +
    \int_{t_{k_t - 1}}^{t}
      \mathbb{E}\bigl[\|\,z(t_{k_t - 1}) - z(s)\|_{L^2(\mathcal{E})}^2\bigr]
    ds
  \Biggr)^{\frac{1}{2}}
\right].
\end{align}
On the other hand, since $z \in H^1(0,T; L^2(\mathcal{E}))$, we have
\begin{align}\label{eq:abs_z}
\nonumber\mathbb{E}\bigl[\|z(t_{k-1}) - z(s)\|_{L^2(\mathcal{E})}^2\bigr]
&=
\mathbb{E}\Bigl[\Bigl\|\int_{t_{k-1}}^s \! \! \partial_t z(\tau)d\tau\Bigr\|_{L^2(\mathcal{E})}^2\Bigr]\le
\mathbb{E}\Bigl[\Bigl(\int_{t_{k-1}}^s \! \!\|\partial_t z(\tau)\|_{L^2(\mathcal{E})}\,d\tau\Bigr)^2\Bigr]\\
&\le
\mathbb{E}\Bigl[h\int_{t_{k-1}}^s  \! \!\|\partial_t z(\tau)\|_{L^2(\mathcal{E})}^2d\tau \Bigr]=
h\int_{t_{k-1}}^{t_k}
\mathbb{E}\bigl[\|\partial_t z(\tau)\|_{L^2(\mathcal{E})}^2\bigr]d\tau.
\end{align}
Substituting \eqref{eq:abs_z} into \eqref{eq:after_gronwall} we get
\begin{align*}
\mathbb{E}\bigl[\|y(t) - z(t)\|_{L^2(\mathcal{E})}^2\bigr]&\leq4e^{Ct}\Biggl[\Bigl(\int_{0}^{t} \Lambda(s)ds\Bigr)^{\frac{1}{2}}
\Biggl(h \sum_{k=1}^{k_t - 1}\int_{t_{k-1}}^{t_k}\int_{t_{k-1}}^{t_k}\mathbb{E}\bigl[\|\partial_t z(\tau)\|_{L^2(\mathcal{E})}^2\bigr]d\tau ds\\
&\hspace{2cm}\quad+h\int_{t_{k_t - 1}}^{t}\int_{t_{k-1}}^{t_k}
\mathbb{E}\bigl[\|\partial_t z(\tau)\|_{L^2(\mathcal{E})}^2\bigr]
d\tau ds\Biggr)^{\frac{1}{2}}\Biggr]\\
&\le 4e^{CT}h\Bigl(\int_{0}^{t}\Lambda(s)ds\Bigr)^{\frac{1}{2}}
\Bigl(\int_{0}^{t}\mathbb{E}\bigl[\|\partial_t z(\tau)\|_{L^2(\mathcal{E})}^2\bigr]d\tau\Bigr)^{\frac{1}{2}}. 
\end{align*}
Now, applying the energy estimate from Theorem \ref{th:wellpossednes_random}, we obtain 
\begin{align*}
\mathbb{E}\bigl[\|y(t) - z(t)\|_{L^2(\mathcal{E})}^2\bigr]  &\le4he^{CT}\|\Lambda\|_{L^1(0,T)}^{\frac{1}{2}}\mathbb{E}\bigl[\|z\|_{H^1(0,T;L^2(\mathcal{E}))}^2\bigr]^{\frac{1}{2}}\leq4he^{CT}\hat{C}\|\Lambda\|_{L^1(0,T)}^{\frac{1}{2}},
\end{align*}
with $\hat{C} = C_T(\|g\|_{H^1(0,T;l^2(\mathcal{V}_b))} + \|f\|_{L^2(0,T;L^2(\mathcal{E}))} + \|y^0\|_{H^1(\mathcal{E})}).$ Completing the proof.
\end{proof}

\section{Numerical simulations}\label{sec:numerical_simulations}
In this section, numerical examples are presented to validate \Cref{th:convergence}. Moreover, we compare the memory usage and computational time required to solve \eqref{eq:heat1random} with those needed to solve \eqref{eq:heat1} on the full graph. We consider the graph depicted in \Cref{fig_1_deamon_graph}. Two examples are considered: the first uses the batches from \textit{option 2} in \Cref{sec:example_decomp}, and the second uses those from \textit{option 1}.

\subsection{Example 1}\label{section:example1}
 We begin by introducing the finite element framework for systems \eqref{eq:heat1} and \eqref{eq:heat1random}.

\noindent
\textbf{Finite Element Setting.} For the discretization of the parabolic equation \eqref{eq:heat1} on the graph, we employ a finite element discretization using standard piecewise linear ($P1$) basis functions (see \cite{firstpaper} for an illustrative introduction). Let $n\geq 1$, and denote by $\mathfrak{V}$ the finite element space spanned by the $P1$ basis functions $\{\phi_i\}_{i=1}^n$. The semi-discrete formulation of \eqref{eq:heat1} consists of finding $y_n\in C^1(0,T;\mathfrak{V})$ that satisfies
\begin{align}\label{eq:semi_discrete_parabolic}
    \begin{cases}
        \left(\partial_t y_n,\phi_i\right)_{L^2(\mathcal{E})} + \left(a\,\partial_x y_n, \partial_x \phi_i\right)_{L^2(\mathcal{E})} + \left(b\,\partial_x y_n, \phi_i\right)_{L^2(\mathcal{E})}\\\hspace{6.5cm}+ \left(p\,y_n, \phi_i\right)_{L^2(\mathcal{E})} = \left(f, \phi_i\right)_{L^2(\mathcal{E})},\\[1mm]
        (y_n(0,\cdot), \phi_i)_{L^2(\mathcal{E})} = (y^0, \phi_i)_{L^2(\mathcal{E})},
    \end{cases}
\end{align}
for every $t\in (0,T)$ and $i\in[n]$, with the condition $y_n(t,v)=g(t)$ for all $v\in \mathcal{V}_b$.

Next, consider the decomposition illustrated in \Cref{fig_1_deamon_graph}. In this decomposition, the graph is partitioned into four subgraphs $\{\mathcal{G}_i\}_{i=1}^4$. To introduce the RBM framework, we consider the batches from \textit{option 2} in \Cref{sec:example_decomp}:
\begin{align}\label{eq:decompostion1}
    \mathcal{B}_1 =\{1\},\quad \mathcal{B}_2 =\{2\},\quad \mathcal{B}_3 =\{3\},\quad \mathcal{B}_4 =\{4\},\quad \mathcal{B}_5 =[4].
\end{align}
As is observed in \Cref{sec:example_decomp}, this decomposition and the associated batches satisfy \Cref{assump:A1}. This batch selection imposes that either one subgraph is chosen or the full graph is chosen. For $K\in \N$, we introduce the random variables $\{\omega_k\}_{k=1}^K$, with associated probabilities $p_j=1/5$ for each $j\in[5]$. Note that $\pi_i=2/5$ for $i\in[4]$. 

The implementation of \eqref{eq:heat1random} is carried out using finite elements together with \Cref{alg1}. For each $k\in[K]$, on the randomly selected graph $\mathcal{E}^k$, the semi-discrete problem reads as:
\begin{align}\label{eq:semi_discrete_random}
    \begin{cases}
        \left(\partial_t \eta_n,\phi_i\right)_{L^2(\mathcal{E}^k)} + \left(\zeta_a\,\partial_x \eta_n, \partial_x \phi_i\right)_{L^2(\mathcal{E}^k)} + \left(\zeta_b\,\partial_x \eta_n, \phi_i\right)_{L^2(\mathcal{E}^k)}\\
        \hspace{5.5cm}+ \left(\zeta_p\,\eta_n, \phi_i\right)_{L^2(\mathcal{E}^k)} = \left(\zeta_1 f, \phi_i\right)_{L^2(\mathcal{E}^k)},\\[1mm]
        (\eta_n(\cdot,t_{k-1}), \phi_i)_{L^2(\mathcal{E}^k)} = (z_n(\cdot,t_{k-1}), \phi_i)_{L^2(\mathcal{E}^k)},
    \end{cases}
\end{align}
for every $t\in I_k$ and $i\in[n]$. The function $\eta_n$ is subject to the boundary conditions $\eta_n(v,t)=\xi_n(v,t)$ for every $v\in \mathcal{V}_0\cap \mathcal{V}_b^k$ and $\eta_n(v,t)=g_v(t)$ for every $v\in \mathcal{V}_b\cap \mathcal{V}_b^k$, with $\xi_n(x,t)=z_n(x,t_{k-1})$ for all $(x,t)\in e\times I_k$ for every $e\in (\mathcal{E}^k)^c$, and $z_n(x,0) =y^0(x)$ for all $x\in e$ for every $e\in \mathcal{E}$.

From now on, each edge $e\in\mathcal{E}$ is identified with the interval $(0,1)$ (after fixing an orientation). Also, the PDE coefficients will be taken as 
\begin{align*}
    a_e(x) = x(x-1)+\frac{1}{2},\quad b_e(x) = \frac{\sin(\pi x)}{2},\quad p_e(x)= \sin(\pi x),\quad\text{for every }e\in\mathcal{E}.
\end{align*}
To construct an explicit solution, the method of manufactured solutions (see \cite[Chapter 12]{MR3931345}) is used. Specifically, we prescribe a function $y$ on $\mathcal{G}\times(0,T)$ that satisfies the continuity and Kirchhoff conditions. Then, the corresponding source term $f$, boundary data $g$, and initial condition $y_0$ are computed as
\begin{align*}
    f_e:= \partial_t y_e
- \partial_x\bigl(a_e\,\partial_x y_e \bigr)
+ b_e\,\partial_x y_e
+ p_e\,y_e, \quad g_v(t) := y_e(v,t), \quad y_e^0(x)=y_e(x,0),
\end{align*}
for every $e\in \mathcal{E}$ and $v\in\mathcal{V}_b$. We assume that
\begin{align*}
    y_e(x,t) = w_e(x)v(t), \hspace{0.3cm} \text{with} \hspace{0.3cm} w_e(x)=\alpha_e x^4+\beta_e x^3+\gamma_e x^2+\delta_e x+\epsilon,
\end{align*}
and $ v(t)= \sin(2\pi t)$, where the coefficients $\alpha_e$ and $\beta_e$ are given in \Cref{tab:coefficients}. These coefficients are chosen such that the exact solution satisfies both the continuity and Kirchhoff conditions.

\begin{table}[h!]
\centering
\caption{Coefficients $\alpha_e$ and $\beta_e$ for each edge.}\label{tab:coefficients}
\begin{tabular}{c|cccccccccc}
\toprule
        Coefficients      & $e_1$ & $e_2$ & $e_3$ & $e_4$ & $e_5$ & $e_6$ & $e_7$ & $e_8$ & $e_9$ & $e_{10}$ \\
\midrule
$\alpha_e$   & 10      & -3      & 5      & 4      & 4      & 10      & -3      & 5      & -3      & 4       \\
$\beta_e$    & -12     & 1       & -7     & -6     & -6     & -12     & 1       & -7     & 1       & -6      \\
\bottomrule
\end{tabular}
\end{table}

\noindent
\textbf{Numerical Implementation.}
For the time discretization of \eqref{eq:semi_discrete_parabolic} and \eqref{eq:semi_discrete_random}, we compare the first-order methods: implicit Euler (IE), the $\theta$-method, and the semi-implicit Euler method (SIEM), as well as the second-order method: Crank–Nicolson (CN).

For each edge $e\in\mathcal{E}$, $100$ basis functions are introduced per edge; therefore, we use $1000$ for all edges. Moreover, to include the vertex (interior and boundary) conditions, we consider a basis function for each vertex. Since the graph in \eqref{fig_1_deamon_graph} has $10$ vertices, the finite element space $\mathfrak{V}$ is spanned by $n=1010$ basis functions. We set $T=1$ and use a time step of $\Delta t = 0.002$. Using the previous schemes, the computational results obtained by solving \eqref{eq:semi_discrete_parabolic} are summarized in \Cref{tab:deterministic}. Since the CN scheme is second-order, its error is smaller than that of the other methods, albeit at the expense of higher computational cost. In \Cref{tab:deterministic}, we define
\begin{align*}
    \text{Error}:=\sup_{t\in[0,T]}\|y(t)-y_{n}(t)\|^2_{L^2(\mathcal{E})}.
\end{align*}
\begin{table}[h!]
\centering
\caption{Deterministic solver comparison on the full graph problem}\label{tab:deterministic}
\resizebox{\textwidth}{!}{
\begin{tabular}{l|cccc}
\toprule
\textbf{Metric}        & \textbf{IE}        & \textbf{CN}         & \boldmath$\theta=\tfrac34$ & \textbf{SIEM}    \\
\midrule
Error                  & $3.764\times10^{-1}$ & $7.327\times10^{-2}$  & $3.747\times10^{-1}$   & $3.689\times10^{-1}$ \\
Execution Time (s)              & $57.13$             & $74.68$              & $59.79$                & $56.71$            \\
Memory Usage (MB)            & $200.14$            & $208.00$             & $200.36$               & $199.93$           \\
\bottomrule
\end{tabular}}
\end{table}

\begin{figure}[h!]
    \centering
    \includegraphics[width=1\linewidth]{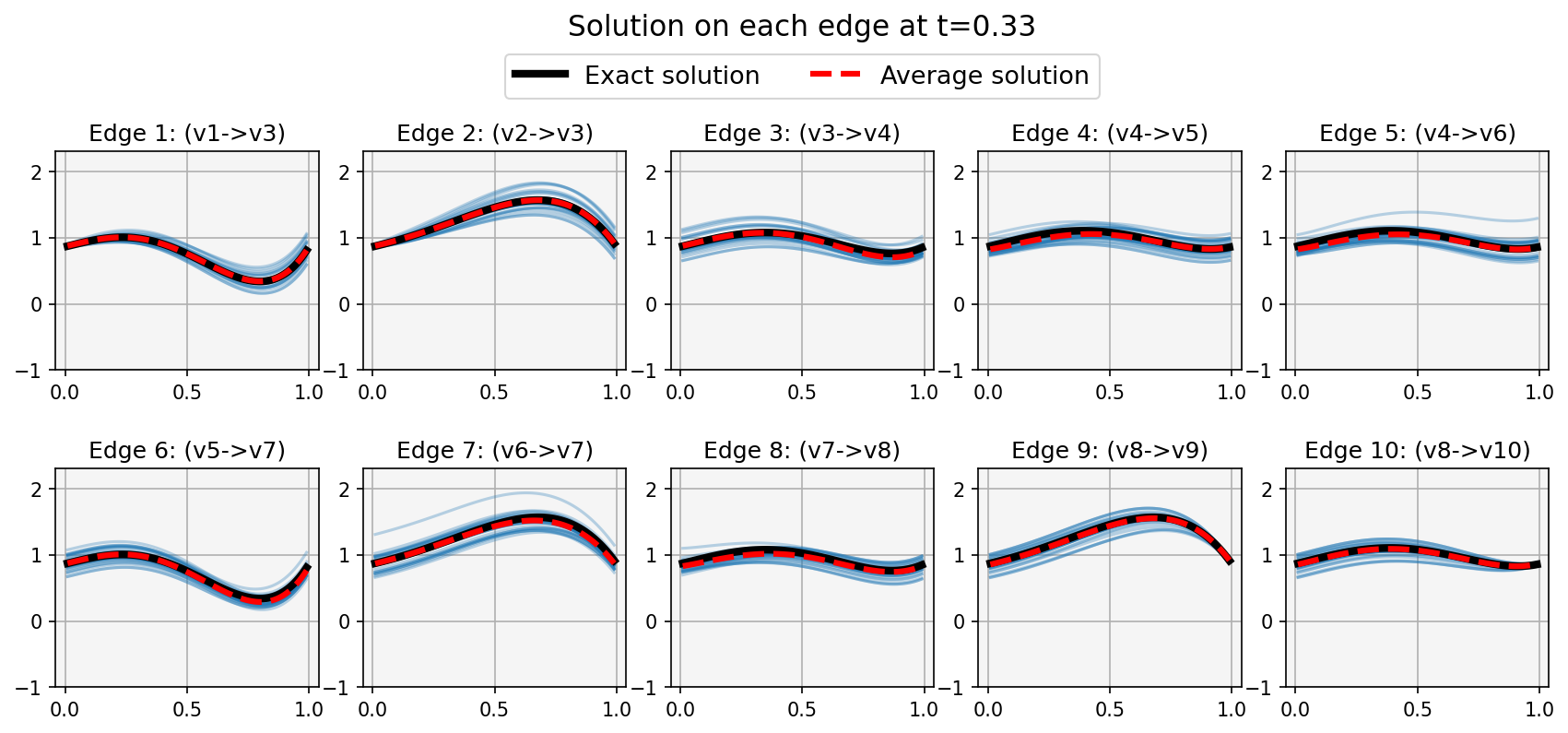}
    \caption{Illustration of the exact solution and the average solution on each edge. The exact solution corresponds to the black continuous lines, while the red dashed lines are the average solution. Thin blue lines illustrate the different realizations.}
    \label{fig:trajectories}
\end{figure}

For the RBM, equation \eqref{eq:heat1random} is solved using $100$ basis functions per edge on the randomly selected graph, with a time step $\Delta t = 0.002$. For $h\in\{0.002,0.004,0.006\}$, the results obtained by using the same time-schemes are shown in \Cref{table:rbm_dec1,table:rbm_dec1_theta_siem}. To approximate the expected value, $20$ realizations are employed. The results for the case $h=0.002$ are shown in \Cref{fig:trajectories}. In \Cref{table:rbm_dec1}, we define
\begin{align}\label{eq:definition_error}
    \text{Error}_1 = \sup_{t\in [0,T]} \mathbb{E}\left[\|y(t)-z(t)\|^2_{L^2(\mathcal{E})}\right],\,\,\, \text{Error}_2 = \sup_{t\in [0,T]} \|y(t)-\mathbb{E}\left[z(t)\right]\|^2_{L^2(\mathcal{E})}.
\end{align}
Further, in \Cref{table:rbm_dec1}, the average execution time and memory usage refer to the mean computational time and memory consumption, respectively. Note that the average execution time is approximately half that required to solve the equation on the full graph. Although RBM uses less memory, the difference is not substantial because, while the equation is solved on a small subgraph when one of the batches $\mathcal{B}_1$, $\mathcal{B}_2$, $\mathcal{B}_3$, or $\mathcal{B}_4$ is selected, the full graph is used when $\mathcal{B}_5$ is chosen. Furthermore, the best approximation is achieved for $h=0.002$, in accordance with \Cref{th:convergence}. Note that employing the CN scheme does not enhance accuracy in this context, which suggests that RBM converges more slowly than a second-order method. This observation does not contradict \Cref{th:convergence}, which guarantees an order of at least one.
\begin{table}[htbp]
\centering
\caption{Performance of RBM with IE and CN, using decomposition \eqref{eq:decompostion1}.}\label{table:rbm_dec1}
\resizebox{\textwidth}{!}{
\begin{tabular}{l|cc|cc|cc}
\toprule
\multirow{2}{*}{\textbf{Metric}} 
    & \multicolumn{2}{c|}{$h=0.002$} 
    & \multicolumn{2}{c|}{$h=0.004$} 
    & \multicolumn{2}{c}{$h=0.006$} \\
\cmidrule(r){2-3} \cmidrule(r){4-5} \cmidrule(r){6-7}
    & \textbf{IE} & \textbf{CN} & \textbf{IE} & \textbf{CN} & \textbf{IE} & \textbf{CN} \\
\midrule
$\mathrm{Error}_1$             & $3.896\times10^{-1}$ & $3.866\times10^{-1}$ & $7.904\times10^{-1}$ & $7.910\times10^{-1}$ & $1.105\times10^{0}$ & $1.109\times10^{0}$ \\
$\mathrm{Error}_2$             & $4.273\times10^{-2}$ & $3.576\times10^{-2}$ & $2.092\times10^{-1}$ & $2.238\times10^{-1}$ & $1.474\times10^{-1}$ & $1.409\times10^{-1}$ \\
Av.\ Execution Time (s)        & $27.153$             & $38.144$             & $22.278$             & $32.382$             & $28.506$             & $40.005$             \\
Av.\ Memory Usage (MB)         & $163.945$            & $165.754$            & $182.832$            & $183.754$            & $183.441$            & $179.992$            \\
\bottomrule
\end{tabular}}
\end{table}

\begin{table}[htbp]
\centering
\caption{Performance RBM with $\theta$-method and SIEM, using decomposition \eqref{eq:decompostion1}.}\label{table:rbm_dec1_theta_siem}
\resizebox{\textwidth}{!}{
\begin{tabular}{l|cc|cc|cc}
\toprule
\multirow{2}{*}{\textbf{Metric}} 
    & \multicolumn{2}{c|}{$h=0.002$} 
    & \multicolumn{2}{c|}{$h=0.004$} 
    & \multicolumn{2}{c}{$h=0.006$} \\
\cmidrule(r){2-3} \cmidrule(r){4-5} \cmidrule(r){6-7}
    & $\boldsymbol\theta$ & \textbf{SIEM} & $\boldsymbol\theta$ & \textbf{SIEM} & $\boldsymbol\theta$ & \textbf{SIEM} \\
\midrule
$\mathrm{Error}_1$             & $3.879\times10^{-1}$ & $3.915\times10^{-1}$ & $7.912\times10^{-1}$ & $7.949\times10^{-1}$ & $1.103\times10^{0}$ & $1.107\times10^{0}$ \\
$\mathrm{Error}_2$             & $3.741\times10^{-2}$ & $4.162\times10^{-2}$ & $2.092\times10^{-1}$ & $2.113\times10^{-1}$ & $1.408\times10^{-1}$ & $1.422\times10^{-1}$ \\
Av.\ Execution Time (s)        & $27.559$             & $24.273$             & $22.854$             & $22.378$             & $29.251$             & $27.849$             \\
Av.\ Memory Usage (MB)         & $162.992$            & $162.570$            & $182.121$            & $185.500$            & $183.512$            & $183.043$            \\
\bottomrule
\end{tabular}}
\end{table}

We now analyze the limit $h\to 0$ for IE. Table \ref{tab:errors_1} presents various values of $h$ alongside the corresponding $\text{Error}_1$ and $\text{Error}_2$ defined in \eqref{eq:definition_error}. Clearly, the error decreases as $h$ diminishes. Furthermore, \Cref{fig:error-convergence} compares these errors with the linear rate of convergence predicted by \Cref{th:convergence} (and \Cref{coro:convergence}).

\begin{figure}[htbp]
\centering
\begin{minipage}[t]{0.45\textwidth}
  \centering
  \subfloat[]{\includegraphics[width=\textwidth]{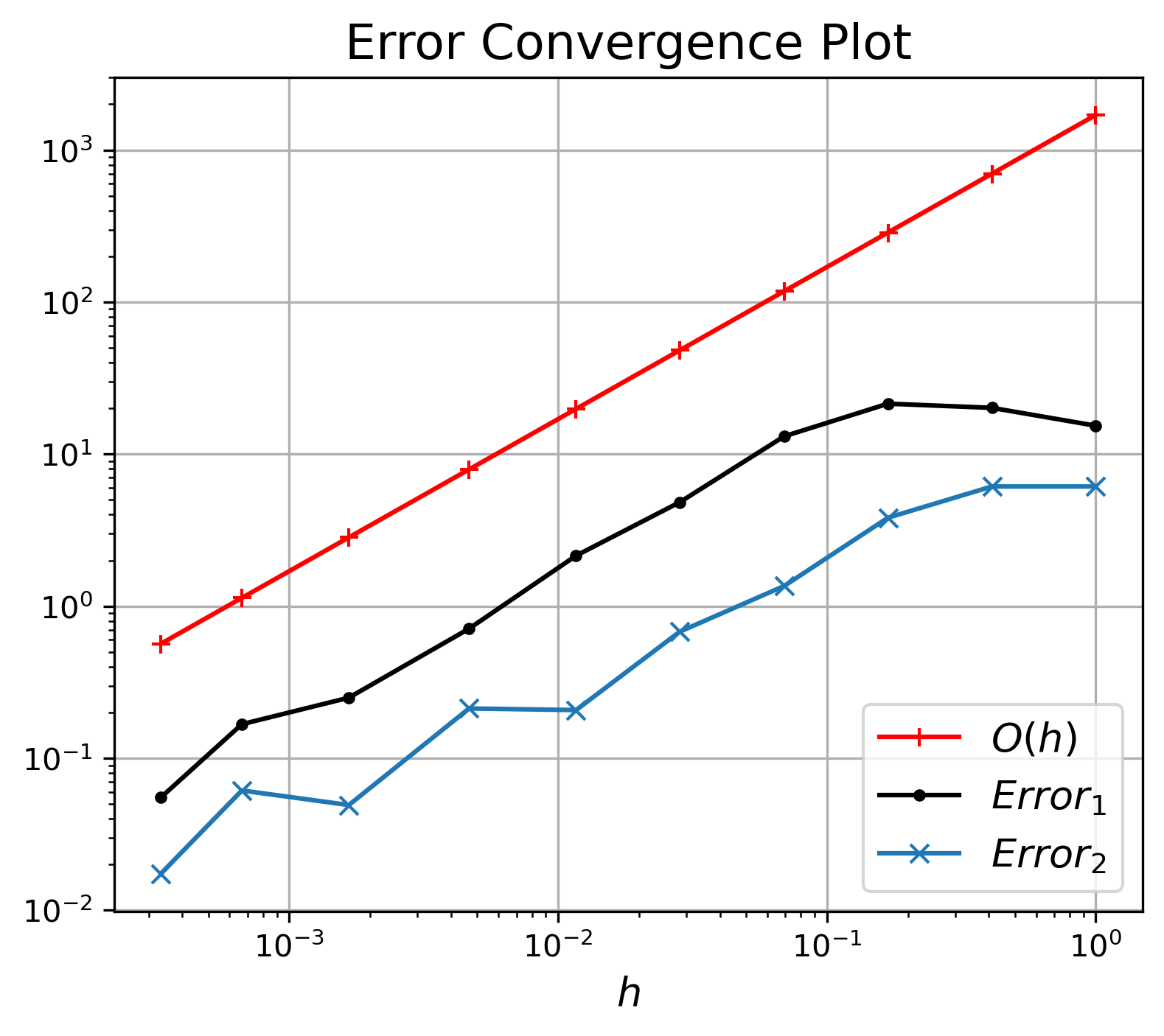}}
\end{minipage}\hfill
\begin{minipage}[t]{0.45\textwidth}
  \centering
  \captionof{table}{Errors 
 and $h$ values.}\label{tab:errors_1}
  \begin{tabular}{c|c|c}
    \toprule
    $\displaystyle h$ & $\displaystyle \text{Error}_1$ & $\displaystyle \text{Error}_2$ \\
    \midrule
    $3.333\times10^{-4}$ & $5.530\times10^{-2}$ & $1.744\times10^{-2}$ \\
    $6.667\times10^{-4}$ & $1.675\times10^{-1}$ & $6.139\times10^{-2}$ \\
    $1.667\times10^{-3}$ & $2.502\times10^{-1}$ & $4.924\times10^{-2}$ \\
    $4.667\times10^{-3}$ & $7.118\times10^{-1}$ & $2.125\times10^{-1}$ \\
    $1.167\times10^{-2}$ & $2.150\times10^{0}$  & $2.074\times10^{-1}$ \\
    $2.833\times10^{-2}$ & $4.826\times10^{0}$  & $6.793\times10^{-1}$ \\
    $6.933\times10^{-2}$ & $1.310\times10^{1}$  & $1.358\times10^{0}$ \\
    $1.687\times10^{-1}$ & $2.147\times10^{1}$  & $3.818\times10^{0}$ \\
    $4.107\times10^{-1}$ & $2.016\times10^{1}$  & $6.128\times10^{0}$ \\
    $1.000\times10^{0}$   & $1.538\times10^{1}$  & $6.128\times10^{0}$ \\
    \bottomrule
  \end{tabular}
\end{minipage}
  \caption{(a) Log–log plot of the errors versus the RBM step size $h$. The black curve with dots represents $\text{Error}_1$, the blue curve with crosses represents $\text{Error}_2$, and the red line with plus markers indicates the expected $\mathcal{O}(h)$ convergence. \Cref{tab:errors_1} lists the corresponding error values for different $h$.}
\label{fig:error-convergence}
\end{figure}

\subsection{Example 2}
Now, for the graph decomposition described in \Cref{fig_1_deamon_graph}, we consider the batches from {\bf option 1} introduced in \Cref{sec:example_decomp}; that is,
\begin{align}\label{eq:decompostion2}
    \mathcal{B}_1 =\{1\},\,\,\, \mathcal{B}_2 =\{2\},\,\, \,\mathcal{B}_3 =\{3\},\,\,\, \mathcal{B}_4 =\{4\},\,\,\, \mathcal{B}_5 =\{1,2,3\},\,\,\, \mathcal{B}_6 =\{2,3,4\}.
\end{align}
By setting the probabilities $p_j=1/6$ for $j\in\{1,\dots,6\}$, it follows that $\pi_1=\pi_4=1/3$ and $\pi_2=\pi_3=1/2$. With this configuration, \eqref{eq:semi_discrete_random} is solved using $100$ basis functions per edge, employing the previous time-schemes with a time step $\Delta t =0.002$, and for various values of $h\in\{0.002,0.004,0.006\}$. The corresponding results are presented in \Cref{table:rbm_dec2,table:rbm_dec2_theta_siem}. Note that the accuracy obtained with the batches \eqref{eq:decompostion2} is similar to that in \Cref{section:example1}. However, in this instance, the equation is never solved on the full graph, resulting in lower average memory usage compared to that in \Cref{table:rbm_dec1,table:rbm_dec1_theta_siem}. Moreover, RBM reduces memory consumption by approximately one-half relative to the full graph approach (see \Cref{tab:deterministic}).
\begin{table}[htbp]
\centering
\caption{Performance RBM with IE and CN, using decomposition \eqref{eq:decompostion2}.}\label{table:rbm_dec2}
\resizebox{\textwidth}{!}{
\begin{tabular}{l|cc|cc|cc}
\toprule
\multirow{2}{*}{\textbf{Metric}} 
    & \multicolumn{2}{c|}{$h=0.002$} 
    & \multicolumn{2}{c|}{$h=0.004$} 
    & \multicolumn{2}{c}{$h=0.006$} \\
\cmidrule(r){2-3} \cmidrule(r){4-5} \cmidrule(r){6-7}
    & \textbf{IE} & \textbf{CN} & \textbf{IE} & \textbf{CN} & \textbf{IE} & \textbf{CN} \\
\midrule
$\mathrm{Error}_1$             & $3.563\times10^{-1}$ & $3.503\times10^{-1}$ & $8.981\times10^{-1}$ & $9.134\times10^{-1}$ & $1.307\times10^{0}$ & $1.309\times10^{0}$ \\
$\mathrm{Error}_2$             & $7.238\times10^{-2}$ & $4.725\times10^{-2}$ & $1.796\times10^{-1}$ & $1.892\times10^{-1}$ & $2.656\times10^{-1}$ & $2.462\times10^{-1}$ \\
Av.\ Execution Time (s)        & $23.142$             & $33.798$             & $23.672$             & $35.799$             & $23.068$             & $34.339$             \\
Av.\ Memory Usage (MB)         & $106.582$            & $107.004$            & $106.672$            & $102.113$            & $101.703$            & $100.848$            \\
\bottomrule
\end{tabular}}
\end{table}

\begin{table}[htbp]
\centering
\caption{Performance RBM with $\theta$-method and SIEM, using decomposition \eqref{eq:decompostion2}.}\label{table:rbm_dec2_theta_siem}
\resizebox{\textwidth}{!}{
\begin{tabular}{l|cc|cc|cc}
\toprule
\multirow{2}{*}{\textbf{Metric}} 
    & \multicolumn{2}{c|}{$h=0.002$} 
    & \multicolumn{2}{c|}{$h=0.004$} 
    & \multicolumn{2}{c}{$h=0.006$} \\
\cmidrule(r){2-3} \cmidrule(r){4-5} \cmidrule(r){6-7}
    & $\boldsymbol\theta$ & \textbf{SIEM} & $\boldsymbol\theta$ & \textbf{SIEM} & $\boldsymbol\theta$ & \textbf{SIEM} \\
\midrule
$\mathrm{Error}_1$             & $3.551\times10^{-1}$ & $3.589\times10^{-1}$ & $8.952\times10^{-1}$ & $8.997\times10^{-1}$ & $1.305\times10^{0}$ & $1.309\times10^{0}$ \\
$\mathrm{Error}_2$             & $5.931\times10^{-2}$ & $7.029\times10^{-2}$ & $1.739\times10^{-1}$ & $1.774\times10^{-1}$ & $2.606\times10^{-1}$ & $2.632\times10^{-1}$ \\
Av.\ Execution Time (s)        & $23.952$             & $23.742$             & $23.941$             & $23.864$             & $23.741$             & $22.442$             \\
Av.\ Memory Usage (MB)         & $106.352$            & $107.762$            & $102.180$            & $102.109$            & $102.113$            & $101.734$            \\
\bottomrule
\end{tabular}}
\end{table}

\noindent
\textbf{Computational Setup.} The previous numerical experiments were executed on a device featuring an AMD Ryzen 9 5900HS processor with Radeon Graphics, clocked at $3.30$ GHz, and equipped with $16.0$ GB of RAM (with $15.4$ GB usable). The experiments were implemented in Python, utilizing the NumPy and SciPy libraries.

\section{Conclusion and open problems}
In this work, we have introduced a fully continuous Random Batch Method for linear parabolic equations on graphs and proved in Theorem \ref{th:convergence} that the mean square error between the exact solution and the RBM approximation decays like $O(h)$. Numerical experiments confirm this convergence rate and show that the method delivers savings in both CPU time and memory, and these gains are independent of the chosen time discretization scheme.

We conclude by highlighting several directions for future work:
\smallbreak

\noindent
$\bullet$  \textbf{Optimal control.} Extending RBM to optimal control entails introducing a randomized version of the usual cost functional, penalizing both state and control in the RBM‐driven dynamics. In the distributed‐control setting with an $\alpha$‐convex (LQR) cost, one can adapt the energy‐estimate proofs from \cite{MR4433122}. For general Lipschitz (nonconvex) functionals, the Pontryagin principle and associated adjoint states must be invoked, and one might only expect convergence up to a constant reflecting the fact that small perturbations of the dynamics may drive to different local minimizers of the cost. Boundary‐control problems pose additional difficulties, since Theorem \ref{th:convergence} requires $H^1$‐in‐time boundary data; here, transposition methods or refined trace‐space frameworks may be necessary.


\noindent
$\bullet$ \textbf{Abstract operator framework.} One can ask whether RBM admits a formulation purely in operator‐theoretic terms: let $(A,D(A))$ be a densely defined operator on a Hilbert space $H$ generating an analytic semigroup, and assume initial data in the real interpolation space $(D(A),H)_{1/2}$ to secure maximal regularity.  In principle, the continuous RBM can then be recast by randomizing the action of $A$ (or its resolvent) on suitable subspaces, mirroring our edge‐wise decomposition.  However, proving convergence in this abstract setting will likely hinge on delicate issues such as the boundedness of the fractional power $A^{1/2}$ and the admissibility of any boundary (or control) operators in $H$, which may fail without additional structure.

\noindent 
$\bullet$ \textbf{Optimal decomposition.}  
As the graph grows in size and complexity, it becomes essential to choose both the subgraph partition and the batch‐selection probabilities in a way that reflects the underlying PDE.  One may borrow from spectral graph theory—using eigenvectors of the weighted Laplacian to suggest cuts—but standard spectral weights ignore PDE data (coefficients, sources, initial conditions).  To make these partitions truly “physics‐informed,’’ one can incorporate problem data into the Laplacian weights before computing the spectrum.  An alternative is to train a graph-neural network on the dual graph, with problem data as input features, to predict which edges to retain in each batch.  Finally, another possibility is that any candidate decomposition can be evaluated by its induced variance operator $\Lambda$, and therefore the goal is to minimize $\|\Lambda\|_{L^1(0,T)}$. This approach minimizes the multiplicative error in Theorem~\ref{th:convergence} and speeds up convergence.  It remains open which of these strategies—spectral weighting, learned pruning, or variance‐based optimization—offers the best trade-off between computational cost and accuracy.

\appendix

\section{Functional spaces on graphs}\label{appendix1}

Let $\mathcal{G} := (\mathcal{E}, \mathcal{V}, l)$ be a metric graph with vertices $v \in \mathcal{V}$ and edges $e \in \mathcal{E}$, each having length $l_e \in (0,\infty)$. Define $\mathbb{N}_0 := \mathbb{N} \cup \{0\}$. Every edge $e \in \mathcal{E}$ is identified with the interval $[0,l_e]$, so that a function $y:\mathcal{G}\to\mathbb{R}$ corresponds to a collection $\{y_e\}_{e\in\mathcal{E}}$, where each $y_e:[0,l_e]\to\mathbb{R}$.

\subsection{Sobolev spaces}

We summarize the function spaces on graphs used in this work; see \cite[Chapter 3]{Mugnolo} for additional details.
\smallbreak

\noindent
\textbf{Piecewise spaces.} For $m \in \mathbb{N}_0$ and $p \in [1,\infty]$, the piecewise Sobolev space is
\[
W_{pw}^{m,p}(\mathcal{E})
:=
\Bigl\{\,y:\mathcal{E}\to\mathbb{R} \,\Big|\,
y_e \in W^{m,p}(0,l_e),\text{for each }e\in\mathcal{E}\Bigr\}.
\]
We write $L^p(\mathcal{E}):=W_{pw}^{0,p}(\mathcal{E})$ and $H_{pw}^m(\mathcal{E}) := W_{pw}^{m,2}(\mathcal{E})$. We endow $W_{pw}^{m,p}$ with the norm
\begin{align}
\|y\|_{W_{pw}^{m,p}(\mathcal{E})}^p
:=
\sum_{e\in\mathcal{E}}
\bigl\|y_e\bigr\|_{W^{m,p}(0,l_e)}^p,
\quad (1 \le p < \infty).
\end{align}

 For $k \in \mathbb{N}_0\cup\{\infty\}$, define the piecewise continuous space
\[
C_{pw}^k(\mathcal{E})
:=
\Bigl\{
  y:\mathcal{E}\to\mathbb{R}
  \,\Big|\,
  y_e \in C^k([0,l_e]),\text{for each }e\in\mathcal{E}
\Bigr\}.
\]
\textbf{Vertex-continuous spaces.} By \cite[Chap. 5.6, Thm. 6]{evans_book}, we have that $W^{m,p}(0,l_e)\subset C^{m-1,1-1/p}(0,l_e)$ if $m,\,p>1$. Thus, $W_{pw}^{m,p}(\mathcal{E}) \subset C^{m-1}_{pw}(\mathcal{E})$ if $m,\,p>1$. Thus, functions in $W_{pw}^{m,p}(\mathcal{E})$ are continuous on each edge $e$, but may be discontinuous at vertices. On the other hand, every function in $W^{m,p}(0,l_e)$ with $m\geq1$ and $p\in[1,\infty)$ has a well-defined trace (\cite[Chapter 5.5, Theorem 1]{evans_book}). This last observation motivates the Sobolev spaces of functions continuous at the vertices:
\[
W^{m,p}(\mathcal{E})
:=
\Bigl\{
  y \in W_{pw}^{m,p}(\mathcal{E})
  \,\Big|\,
  y_{e_1}(v) = y_{e_2}(v), 
  \forall v\in\mathcal{V}_0,\forall\,e_1,e_2\in\mathcal{E}(v)
\Bigr\},
\]
where $\mathcal{E}(v)$ is the set of all edges incident at $v$. For $p=2$, we write $H^m(\mathcal{E}):=W^{m,2}(\mathcal{E})$. If the graph has boundary vertices $\mathcal{V}_b\subset\mathcal{V}$, then
\[
H_0^m(\mathcal{E})
:=
\Bigl\{
  y \in H^m(\mathcal{E})
  \,\Big|\,
  y_e(v)=0,
  \text{for every }v\in\mathcal{V}_b
  \text{ and each } e\in\mathcal{E}(v)
\Bigr\}.
\]
It is noteworthy that the spaces $H^m(\mathcal{E})$ and $H_0^m(\mathcal{E})$ are Hilbert spaces since they are closed subspaces of the Hilbert space $\prod_{e\in \mathcal{E}} H^m(e)$ with (finite) constraints defined by continuous linear maps. Finally, for $k\in \mathbb{N}_0\cup\{\infty\}$, we define the space of $k$-times continuously differentiable functions on the graph by 
\begin{align}\label{eq:continuous_functions}
C^k(\mathcal{E})
:=
\Bigl\{
   y \in C_{pw}^k(\mathcal{E})
   \,\Big|\,
   \partial^k y_{e_1}(v)
   =
   \partial^k y_{e_2}(v),\,
   \forall v\in\mathcal{V}_0,\,
   e_1,e_2 \in\mathcal{E}(v)
\Bigr\}.
\end{align}
Here $\partial^k y$ denotes the $k$-th derivative of $y$. If $k=0$, we write $C(\mathcal{E}):=C^0(\mathcal{E})$. For simplicity, we retain the notation $C(\mathcal{E})$, even though functions in $C(\mathcal{E})$ belong to $C(\mathcal{G})$ and admit well-defined values at the vertices. For $k=\infty$, we let $C^\infty(\mathcal{E})$ be the set of infinitely differentiable functions satisfying vertex continuity of all derivatives. Moreover,
$C^\infty_c(\mathcal{E})
\subset
C^\infty(\mathcal{E})$ denotes those functions with compact support in $\mathcal{E}$, and $C^\infty_0(\mathcal{E})
\subset
C^\infty(\mathcal{E})$ denotes those functions vanishing at every boundary vertex in $\mathcal{V}_b$.

\smallbreak

\noindent
\textbf{Spaces on vertices.} For $p\in[1,\infty)$, we consider $l^p(\mathcal{V})$, the space of real-valued functions defined on the set of vertices $\mathcal{V}$, given by
\begin{align*}
l^p(\mathcal{V})
=
\Bigl\{
  f:\mathcal{V} \to \mathbb{R}
  \Big|
  \|f\|_{l^p(\mathcal{V})}
  :=
  \Bigl(\sum_{v \in \mathcal{V}} |f(v)|^p\Bigr)^{1/p}
  < \infty
\Bigr\}.
\end{align*}
In the remainder, we focus primarily on the space $l^2(\mathcal{V}_b)$.
\smallbreak

\noindent
{\bf Bochner spaces.} Finally, we also consider the standard definitions of Bochner spaces $L^p(0,T;X)$, continuous spaces $C([0,T];X)$, and Sobolev spaces $W^{m,p}(0,T;X)$ (with $H^m(0,T;X)=W^{m,2}(0,T;X)$), where $X$ will be taken as a functional space defined on a graph. See \cite[Section 5.9.2]{evans_book} for a comprehensive introduction of Bochner spaces.

\subsection{Density results}
Regarding the relation between the previous spaces, we have the following lemmas.

    \begin{lemma}\label{lemma:dense_CinfH1}
The space $C^\infty_0(\mathcal{E})$ is dense in $H_0^1(\mathcal{E})$.
\end{lemma}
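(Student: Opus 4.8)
The plan is to approximate in three stages: first subtract off the vertex values so as to reduce to functions vanishing at \emph{all} vertices, then cut off near the vertices, then mollify edge by edge. Throughout I use that $\mathcal{G}$ has finitely many edges of finite length, so $\min_{e\in\mathcal{E}} l_e>0$.

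\emph{Step 1 (reduction to vertex-vanishing functions).} Given $u\in H_0^1(\mathcal E)$, the trace $u(v)$ is well defined at every $v\in\mathcal V$ and $u(v)=0$ for $v\in\mathcal V_b$. Fix $\delta\in\bigl(0,\tfrac13\min_e l_e\bigr)$ and construct $\phi\in C^\infty(\mathcal E)$ which, on each edge $e\cong[0,l_e]$ with endpoints $v_0$ (at $0$) and $v_1$ (at $l_e$), equals the constant $u(v_0)$ on $[0,\delta]$, the constant $u(v_1)$ on $[l_e-\delta,l_e]$, and interpolates smoothly on $[\delta,l_e-\delta]$. Being locally constant near each vertex, $\phi$ has all edgewise derivatives vanishing there, hence satisfies vertex continuity of every derivative; and since $u\equiv0$ on $\mathcal V_b$ it vanishes at the boundary vertices, so $\phi\in C_0^\infty(\mathcal E)$. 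Then $w:=u-\phi\in H_0^1(\mathcal E)$ and $w(v)=0$ for \emph{every} $v\in\mathcal V$.

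\emph{Step 2 (cutoff near the vertices).} Let $N_r$ denote the $r$-neighborhood of $\mathcal V$ in $\mathcal G$, and let $\theta_\varepsilon:\mathcal G\to[0,1]$ be smooth on each edge with $\theta_\varepsilon\equiv0$ on $N_\varepsilon$, $\theta_\varepsilon\equiv1$ off $N_{2\varepsilon}$, and $|\partial_x\theta_\varepsilon|\le C/\varepsilon$. I claim $\theta_\varepsilon w\to w$ in $H^1(\mathcal E)$ as $\varepsilon\to0$. Indeed, $\|\theta_\varepsilon w-w\|_{L^2(\mathcal E)}^2\le\int_{N_{2\varepsilon}}|w|^2\to0$ and $\|(\theta_\varepsilon-1)\partial_x w\|_{L^2(\mathcal E)}^2\le\int_{N_{2\varepsilon}}|\partial_x w|^2\to0$ by dominated convergence, since $|N_{2\varepsilon}|\to0$. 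For the remaining term $w\,\partial_x\theta_\varepsilon$, I use that $w$ vanishes at each vertex $v$: for $x\in N_{2\varepsilon}$ with $d(x,v)<2\varepsilon$, the fundamental theorem of calculus along the incident edge gives $|w(x)|^2\le d(x,v)\int_{N_{2\varepsilon}}|\partial_x w|^2\le 2\varepsilon\int_{N_{2\varepsilon}}|\partial_x w|^2$; combined with $|\partial_x\theta_\varepsilon|^2\le C\varepsilon^{-2}$ and $|N_{2\varepsilon}\setminus N_\varepsilon|\le 2|\mathcal E|\,\varepsilon$ this yields $\|w\,\partial_x\theta_\varepsilon\|_{L^2(\mathcal E)}^2\le 4C|\mathcal E|\int_{N_{2\varepsilon}}|\partial_x w|^2\to0$. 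Each $\theta_\varepsilon w$ is supported in a finite union of closed subintervals of the open edges, i.e.\ away from $\mathcal V$.

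\emph{Step 3 (mollification) and conclusion.} A function supported away from $\mathcal V$ is a finite family of $H^1$ functions on intervals, each vanishing near the interval endpoints; convolving each with a standard mollifier $\rho_\sigma$ for $\sigma$ small keeps the support strictly inside the open edges and converges in $H^1$, producing an element of $C_c^\infty(\mathcal E)\subset C_0^\infty(\mathcal E)$. Given $\eta>0$, pick $\varepsilon$ with $\|\theta_\varepsilon w-w\|_{H^1(\mathcal E)}<\eta/2$, then a mollification $\chi\in C_c^\infty(\mathcal E)$ of $\theta_\varepsilon w$ with $\|\chi-\theta_\varepsilon w\|_{H^1(\mathcal E)}<\eta/2$; then $\phi+\chi\in C_0^\infty(\mathcal E)$ and $\|(\phi+\chi)-u\|_{H^1(\mathcal E)}=\|\chi-w\|_{H^1(\mathcal E)}<\eta$. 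I expect Step 2 to be the main obstacle: applying the cutoff to $u$ directly fails, since if $u(v)\neq0$ at an interior vertex then $\|u\,\partial_x\theta_\varepsilon\|_{L^2(\mathcal E)}^2\sim|u(v)|^2/\varepsilon\to\infty$; hence the reduction in Step 1 is essential, and the convergence $\theta_\varepsilon w\to w$ rests on the Poincaré/Hardy-type estimate near each vertex recorded above.
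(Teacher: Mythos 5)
Your proof is correct, and it takes a genuinely different route from the paper's. The paper works edge by edge: it approximates $u_e$ in $H^1(e)$ by smooth functions on each closed edge (vanishing at boundary vertices where required), and then repairs the resulting mismatch of values at each interior vertex by adding a small correction $\eta_v^n(x)\bigl(\rho_{e_1}^n(v)-\rho_{e_j}^n(v)\bigr)$ supported near $v$; the correction is harmless because the coefficient $\rho_{e_1}^n(v)-\rho_{e_j}^n(v)\to0$ by the Sobolev embedding $H^1(e)\hookrightarrow C(\overline e)$. You instead subtract a globally smooth function $\phi$ carrying the vertex values, reduce to a function $w$ vanishing at \emph{all} vertices, and then run the standard cutoff-plus-mollification argument, with the Hardy-type estimate $|w(x)|^2\le d(x,v)\int|\partial_x w|^2$ controlling the problematic term $w\,\partial_x\theta_\varepsilon$. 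Your approach buys two things: (i) the approximants are locally constant near every vertex, so they satisfy vertex continuity of \emph{all} derivatives and hence land squarely in $C^\infty_0(\mathcal E)$ as defined in \eqref{eq:continuous_functions}, whereas the paper's patched functions only match in value at interior vertices; and (ii) it isolates correctly where the vertex-vanishing hypothesis is indispensable (your observation that cutting off $u$ directly would blow up like $|u(v)|^2/\varepsilon$ is exactly right, since points have positive capacity in one dimension). The paper's argument is shorter and leans on the classical one-dimensional density results, at the cost of a slightly delicate gluing step. Both are valid proofs of the lemma.
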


\begin{proof} Take any $u \in H_0^1(\mathcal{E})$. By definition: $u_e \in H^1(e)$ for each $e \in \mathcal{E}$, $u_e(v) = 0$ for every $v\in \mathcal{V}_b$, and $ u_{e_1}(v) = u_{e_2}(v)$ for every $v\in \mathcal{V}_0$, and $e_1,\,e_2 \in \mathcal{E}(v)$. We will approximate $u$ in $H_0^1(\mathcal{E})$ by functions $\rho$ belonging to $C^\infty_0(\mathcal{E})$. We proceed in two steps:
\smallbreak

\noindent
\textbf{Step 1. Approximation on each edge.}
Let $e \in \mathcal{E}$ be fixed. We have two cases:

\noindent
\emph{-Case (a). $e$ meets a boundary vertex $v \in \mathcal{V}_b$.}
In this case  $u_e \in H^1_v(e)$ where $H^1_v(e)$ denotes the subspace of $H^1(e)$ whose functions also vanish at $v$. Then it follows from \cite[Section 8.3]{brezis_book} that $C^\infty_{0(v)}(\overline{e})$ is dense in $H^1_v(e)$. This implies the existence of a sequence $\{\rho_e^n\}_{n\in \N}\subset C^\infty_{0(v)}(\overline{e})$ with $\rho_e^n \to u_e$ in $H^1(e)$. Here $C^\infty_{0(v)}(\overline{e})$ denotes smooth functions on $\overline{e}$ that vanish at $v$.

\noindent
\emph{-Case (b). $e$ does not meet any boundary vertex.}
Then $u_e \in H^1(e)$, and by \cite[Section 5.3.3]{evans_book}, $C^\infty(\overline{e})$ is dense in $H^1(e)$. Hence there is a sequence $\{\rho_e^n\}_{n\in \N}\subset C^\infty(\overline{e})$ such that $\rho_e^n\to u_e$ in $H^1(e)$.
\smallbreak

\noindent
\textbf{Step 2. Global smooth function.}
Let $v \in \mathcal{V}_0$ be an interior vertex, and suppose it is incident to $|E| \ge 2$ edges, labeled $e_1,\dots,e_k \in \mathcal{E}(v)$. Since $u_{e_i}(v)$ is the same for all $i\in[|E|]$, each sequence $\{\rho_{e_i}^n\}_{n\in \N}$ converges to $u_{e_i}$ in $H^1(e_i)$, and by Sobolev embedding $H^1(e_i)\hookrightarrow C(\overline{e_i})$, we have $\rho_{e_i}^n(v)\to u(v)$ in the supremum norm for each $i$. Hence for large $n$, the differences $|\rho_{e_i}^n(v)-\rho_{e_j}^n(v)|$ become arbitrarily small, but may not vanish. To obtain a family $\{\tilde{\rho}_{e_i}^n\}_{n\in\N}$, whose values \emph{exactly} coincide at $v$, for each $j\in \{2,\dots,|E|\}$, and every $n\in \N$ define
\begin{align*}
\tilde{\rho}_{e_j}^n(x)
:=
\rho_{e_j}^n(x) 
+ 
\eta_v^n(x)\,\bigl(\rho_{e_1}^n(v)-\rho_{e_j}^n(v)\bigr),
\quad
\tilde{\rho}_{e_1}^n(x) := \rho_{e_1}^n(x),
\end{align*}
where $\eta_v^n$ is a smooth cutoff supported in a small neighborhood of $v$ (on edge $e_j$) such that $\eta_v^n(v)=1$ and $\|\eta_v^n\|_{H^1(e_j)}$ can be made arbitrarily small. This adjustment forces $\tilde{\rho}_{e_1}^n(v) 
=
\tilde{\rho}_{e_2}^n(v)
=\cdots=
\tilde{\rho}_{e_{|E|}}^n(v),$ for every $n\in \N$. Finally, by defining $\psi^n \colon \mathcal{E}\to \mathbb{R}$ as $\psi^n|_e =\tilde{\rho}_e^n$, we have that $\{\psi^n\}_{n\in \N}\subset C^\infty_0(\mathcal{E})$. Moreover, by construction $\psi^n\to u$ in $H_0^1(\mathcal{E})$ as $n\to \infty$. Concluding the proof.

\end{proof}

\begin{lemma}\label{lemma:dense_H1L2}
The space $H_0^1(\mathcal{E})$ is dense in $L^2(\mathcal{E})$.
\end{lemma}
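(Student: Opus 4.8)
The plan is to show that a dense subspace of $L^2(\mathcal{E})$ is already contained in $H_0^1(\mathcal{E})$, so the density follows immediately. The natural candidate is $C_c^\infty(\mathcal{E})$, the space of smooth functions with compact support inside the (open) edges: such functions vanish in a neighborhood of every vertex, so in particular they vanish at the boundary vertices $\mathcal{V}_b$ and are trivially continuous at the interior vertices $\mathcal{V}_0$, hence $C_c^\infty(\mathcal{E})\subset C_0^\infty(\mathcal{E})\subset H_0^1(\mathcal{E})$.

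First I would recall that $L^2(\mathcal{E})=\bigoplus_{e\in\mathcal{E}} L^2(0,l_e)$ as an (orthogonal) direct sum, since the graph is finite. On each edge $L^2(0,l_e)$ is a classical Lebesgue space, and $C_c^\infty(0,l_e)$ is dense in it (standard mollification / approximation result, e.g.\ \cite[Section 4.4]{brezis_book}). Then, given $u=\{u_e\}_{e\in\mathcal{E}}\in L^2(\mathcal{E})$ and $\varepsilon>0$, for each edge $e$ pick $\varphi_e\in C_c^\infty(0,l_e)$ with $\|u_e-\varphi_e\|_{L^2(0,l_e)}<\varepsilon/\sqrt{|\mathcal{E}|}$. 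Define $\varphi:\mathcal{E}\to\R$ by $\varphi|_e=\varphi_e$; since each $\varphi_e$ vanishes near the endpoints of $e$, $\varphi\in C_c^\infty(\mathcal{E})\subset H_0^1(\mathcal{E})$, and
\begin{align*}
\|u-\varphi\|_{L^2(\mathcal{E})}^2 = \sum_{e\in\mathcal{E}}\|u_e-\varphi_e\|_{L^2(0,l_e)}^2 < \varepsilon^2.
\end{align*}
As $\varepsilon>0$ was arbitrary, $H_0^1(\mathcal{E})$ is dense in $L^2(\mathcal{E})$.

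There is essentially no obstacle here: the only point requiring a moment's care is checking that functions in $C_c^\infty(\mathcal{E})$ genuinely lie in $H_0^1(\mathcal{E})$ (which is immediate once one observes they vanish near all vertices, so both the vertex-continuity condition at $\mathcal{V}_0$ and the vanishing condition at $\mathcal{V}_b$ hold), and that the finiteness of $\mathcal{E}$ lets one combine the edgewise approximations additively. One could alternatively invoke \Cref{lemma:dense_CinfH1} to avoid even naming $C_c^\infty(\mathcal{E})$, but the direct edgewise argument is cleaner and self-contained.
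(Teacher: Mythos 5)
Your proof is correct and follows essentially the same route as the paper: both arguments observe that smooth functions compactly supported in the open edges are dense in $L^2(\mathcal{E})$ edge by edge and already lie in $H_0^1(\mathcal{E})$, so density follows from the inclusion. Your version merely spells out the edgewise approximation more explicitly.
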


\begin{proof}
Since $C_0^\infty(e)$ is dense in $L^2(e)$ for each $e \in \mathcal{E}$, it follows that $C_0^\infty(\mathcal{E})$ is dense in $L^2(\mathcal{E})$. Because $C_0^\infty(\mathcal{E}) \subset H_0^1(\mathcal{E}) \subset L^2(\mathcal{E})$, we can take closure in the $L^2(\mathcal{E})$ topology, and deduce the result.
\end{proof}
\begin{lemma}\label{lemma:dense_L2H-1}
Let $H^{-1}(\mathcal{E})$ be the dual of $H_0^1(\mathcal{E})$. Then $L^2(\mathcal{E})$ is dense in $H^{-1}(\mathcal{E})$.
\end{lemma}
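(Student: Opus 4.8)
The plan is to exploit the Gelfand-triple structure $H_0^1(\mathcal{E}) \hookrightarrow L^2(\mathcal{E}) \hookrightarrow H^{-1}(\mathcal{E})$ together with the general principle that the adjoint of a dense continuous embedding of Hilbert spaces is again a dense continuous embedding. First I would make the inclusion $L^2(\mathcal{E}) \hookrightarrow H^{-1}(\mathcal{E})$ explicit: to each $f \in L^2(\mathcal{E})$ associate the functional $T_f \in H^{-1}(\mathcal{E})$ given by $\langle T_f, u\rangle := (f,u)_{L^2(\mathcal{E})}$ for $u \in H_0^1(\mathcal{E})$. Using the continuity of the embedding $H_0^1(\mathcal{E}) \hookrightarrow L^2(\mathcal{E})$ one gets $\|T_f\|_{H^{-1}(\mathcal{E})} \le C\,\|f\|_{L^2(\mathcal{E})}$, and using \Cref{lemma:dense_H1L2} (density of $H_0^1(\mathcal{E})$ in $L^2(\mathcal{E})$) one checks that $f \mapsto T_f$ is injective, so that $L^2(\mathcal{E})$ is genuinely a subspace of $H^{-1}(\mathcal{E})$ on which the duality pairing restricts to the $L^2(\mathcal{E})$ inner product.

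Next I would prove the density by a Hahn--Banach argument. Since $H_0^1(\mathcal{E})$ is a Hilbert space, it is reflexive, hence $(H^{-1}(\mathcal{E}))^* \cong H_0^1(\mathcal{E})$. Thus it suffices to show that every $u \in H_0^1(\mathcal{E})$ that annihilates $L^2(\mathcal{E})$ inside $H^{-1}(\mathcal{E})$ must vanish. For such a $u$ and every $f \in L^2(\mathcal{E})$ we have $0 = \langle T_f, u\rangle = (f,u)_{L^2(\mathcal{E})}$; taking $f = u \in H_0^1(\mathcal{E}) \subset L^2(\mathcal{E})$ yields $\|u\|_{L^2(\mathcal{E})}^2 = 0$, hence $u = 0$ in $L^2(\mathcal{E})$ and therefore in $H_0^1(\mathcal{E})$. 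By Hahn--Banach, $L^2(\mathcal{E})$ is dense in $H^{-1}(\mathcal{E})$, which is the claim.

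There is no serious obstacle here; the statement is a soft functional-analytic consequence of \Cref{lemma:dense_H1L2}. The only point needing a little care is the correct identification of the duality pairing $\langle \cdot,\cdot\rangle_{H^{-1}(\mathcal{E}),H_0^1(\mathcal{E})}$ with the $L^2(\mathcal{E})$ inner product on the copy of $L^2(\mathcal{E})$ sitting inside $H^{-1}(\mathcal{E})$ --- precisely the place where density of $H_0^1(\mathcal{E})$ in $L^2(\mathcal{E})$ is invoked, to guarantee that $f \mapsto T_f$ loses no information. If one prefers to avoid reflexivity, the same conclusion follows by working in the Hilbert space $H^{-1}(\mathcal{E})$ directly: the orthogonal complement of $\overline{L^2(\mathcal{E})}$ corresponds, via the Riesz isomorphism $H^{-1}(\mathcal{E}) \cong H_0^1(\mathcal{E})$, to those $u \in H_0^1(\mathcal{E})$ with $(f,u)_{L^2(\mathcal{E})} = 0$ for all $f \in L^2(\mathcal{E})$, which again forces $u = 0$.
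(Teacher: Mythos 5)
Your argument is correct and follows essentially the same route as the paper: the Hahn--Banach density criterion combined with reflexivity of $H_0^1(\mathcal{E})$ to identify $(H^{-1}(\mathcal{E}))^*$ with $H_0^1(\mathcal{E})$, and then testing against $f=u$ to force $u=0$. Your additional remarks on the injectivity of $f\mapsto T_f$ and the Riesz-isomorphism variant are fine but not needed beyond what the paper records.
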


\begin{proof}
For every $f\in L^2(\mathcal{E})$ consider $f \mapsto F_f,$ where $F_f(u) = (f, u)_{L^2(\mathcal{E})}$ for all $u \in H_0^1(\mathcal{E}).$ Assume for contradiction that $ L^2(\mathcal{E}) $ is not dense in $ H^{-1}(\mathcal{E}) $. By the Hahn-Banach theorem, there exists a non-zero linear functional $ \Phi \in (H^{-1}(\mathcal{E}))^*$ such that $\Phi(F_f) = 0,$ for all $f \in L^2(\mathcal{E}).$ By reflexivity, $(H^{-1}(\mathcal{E}))^*$ is isometrically isomorphic to $ H_0^1(\mathcal{E})$; hence there exists a non-zero $u\in H_0^1(\mathcal{E})$ satisfying $\Phi(F) = F(u)$ for all $F \in H^{-1}(\mathcal{E}).$ In particular, for all $ f \in L^2(\mathcal{E}) $ we have
\begin{align*}
    0 = \Phi(F_f) = F_f(u) = (f, u)_{L^2(\mathcal{E})}.
\end{align*}
This implies $u =0 $ in $ L^2(\mathcal{E})$ and consequently $u =0 $ in $ H_0^1(\mathcal{E})$, contradicting $u \neq 0$. Hence, $L^2(\mathcal{E})$ must be dense in $H^{-1}(\mathcal{E}) $.
\end{proof}

 \section{Properties of the main operator on the graph}\label{appendix3}

\subsection{M-accretivity  and analyticity}
\begin{lemma}\label{prop:A_deg_generator}
Consider
$A : D(A) \subset L^2(\mathcal{E}) \to L^2(\mathcal{E})$ defined in \eqref{def_A_heat}. Then $(A+\lambda I)$ is $M$-accretive for $\lambda>0$, and $A$ generates an analytic semigroup 
on $L^2(\mathcal{E})$.
\end{lemma}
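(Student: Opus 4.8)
The plan is to realise $A$ as the operator associated with a bounded, $H$-elliptic sesquilinear form and then invoke the classical form method. Put $H:=L^2(\mathcal E)$ and $V:=H_0^1(\mathcal E)$, which by \Cref{lemma:dense_H1L2} embeds densely and continuously into $H$, and define
\[
\mathfrak a(u,w):=\sum_{e\in\mathcal E}\int_e a_e\,\partial_x u_e\,\overline{\partial_x w_e}
+\sum_{e\in\mathcal E}\int_e b_e\,\partial_x u_e\,\overline{w_e}
+\sum_{e\in\mathcal E}\int_e p_e\,u_e\,\overline{w_e},
\qquad u,w\in V.
\]
I would first check the two structural properties of $\mathfrak a$. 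Continuity on $V\times V$ is immediate from Cauchy--Schwarz and $a,b,p\in L^\infty(\mathcal E)$. For the G\aa rding inequality, use the ellipticity \eqref{eq:assumption_coef_a}, Young's inequality on the convection term, and $|p_e|\le\|p\|_{L^\infty(\mathcal E)}$ to get
\[
\operatorname{Re}\mathfrak a(u,u)\;\ge\;\tfrac{a_0}{2}\,\|\partial_x u\|_{L^2(\mathcal E)}^2-\lambda_0\,\|u\|_{L^2(\mathcal E)}^2,
\qquad \lambda_0:=\frac{\|b\|_{L^\infty(\mathcal E)}^2}{2a_0}+\|p\|_{L^\infty(\mathcal E)},
\]
so that $\operatorname{Re}\mathfrak a(u,u)+(\lambda_0+\tfrac{a_0}{2})\|u\|_H^2\ge\tfrac{a_0}{2}\|u\|_V^2$; note that no Poincar\'e inequality is needed here. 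Hence $\mathfrak a$ is a bounded, $H$-elliptic (m-sectorial up to the shift $\lambda_0$) form.

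The next step is to identify the operator $B$ associated with $\mathfrak a$ — defined by $D(B)=\{u\in V:\exists\,g\in H,\ \mathfrak a(u,w)=(g,w)_H\ \forall w\in V\}$, $Bu=g$ — with the operator $A$ of \eqref{def_A_heat}. Given $u\in D(B)$, testing with $w\in C_c^\infty(e)$ extended by $0$, for each edge $e$, shows that $-\partial_x(a_e\partial_x u_e)+b_e\partial_x u_e+p_e u_e=g_e$ holds in $\mathcal D'(e)$; since $\partial_x u_e\in L^2(e)$, $a_e\in W^{1,\infty}(e)$ with $a_e\ge a_0>0$, and the distributional derivative of $a_e\partial_x u_e$ equals $b_e\partial_x u_e+p_e u_e-g_e\in L^2(e)$, one gets $a_e\partial_x u_e\in H^1(e)$, whence $\partial_x u_e=(a_e\partial_x u_e)/a_e\in H^1(e)$ and $u_e\in H^2(e)$; thus $u\in H_0^1(\mathcal E)\cap H_{pw}^2(\mathcal E)$. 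Integrating by parts edge by edge and inserting the edgewise equations, the identity $\mathfrak a(u,w)=(g,w)_H$ reduces, for every $w\in V$, to
\[
0=\sum_{v\in\mathcal V}\overline{w(v)}\sum_{e\in\mathcal E(v)}a_e(v)\,\partial_x u_e(v)\,n_e(v)
=\sum_{v\in\mathcal V_0}\overline{w(v)}\sum_{e\in\mathcal E(v)}a_e(v)\,\partial_x u_e(v)\,n_e(v),
\]
because elements of $V$ are vertex-continuous and vanish on $\mathcal V_b$; as the traces $\{w(v)\}_{v\in\mathcal V_0}$ range over all of $\mathbb C^{\mathcal V_0}$, the Kirchhoff condition at every $v\in\mathcal V_0$ follows, i.e.\ $u\in D(A)$ and $Bu=Au$. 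Reading the same integration by parts backwards gives $D(A)\subset D(B)$, so $B=A$.

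Finally, the classical form method (Lax--Milgram together with Kato's theory of sesquilinear forms) shows that $A=B$ is m-sectorial up to the shift $\lambda_0$ and therefore generates an analytic semigroup on $L^2(\mathcal E)$. For $M$-accretivity of $A+\lambda I$ with $\lambda\ge\lambda_0$: accretivity is exactly $\operatorname{Re}\mathfrak a(u,u)+\lambda\|u\|_H^2\ge0$ on $D(A)$, which holds since $\tfrac{a_0}{2}\|\partial_x u\|_{L^2}^2\ge0$; and for $\mu>0$ small the form $\mathfrak a(\cdot,\cdot)+(\lambda+\tfrac1\mu)(\cdot,\cdot)_H$ is coercive on $V$, so Lax--Milgram produces, for each $f\in H$, a $u\in V$ with $(A+(\lambda+\tfrac1\mu)I)u=f$ by the identification above, giving surjectivity of $I+\mu(A+\lambda I)$. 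The routine parts of this plan are the continuity and G\aa rding estimates; the step I expect to be the main obstacle is the identification $B=A$ — specifically the elliptic-regularity argument upgrading a weak solution to $H_{pw}^2(\mathcal E)$ on each edge, and, above all, the extraction of the \emph{natural} transmission (Kirchhoff) condition at the interior vertices from the variational identity, which is where the vertex structure of the graph genuinely enters.
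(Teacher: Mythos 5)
Your proposal is correct and follows essentially the same route as the paper: a G\aa rding-type coercivity estimate for the form on $H_0^1(\mathcal{E})$, Lax--Milgram for surjectivity of the shifted operator, edgewise elliptic regularity to upgrade weak solutions to $H^2_{pw}(\mathcal{E})$, extraction of the Kirchhoff condition from the variational identity to identify the form operator with $A$, and the standard form/Gelfand-triple machinery for analyticity. The only differences are organizational (you run everything through the form method at once, whereas the paper proves accretivity directly on $D(A)$ first and treats analyticity separately via the $H$--$V$ coercivity of $\mathfrak{b}$), and these do not affect correctness.
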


\begin{proof}
We will start first by showing the M-accretivity of the operator $A$.
\smallbreak

\noindent
\textbf{1. M-accretivity.}
We split the argument into two parts: 

\smallbreak
\textit{(i) Proof of accretivity.} Let $y \in D(A)$. By integrating by parts,
\begin{align*}
(Ay,y)_{L^2(\mathcal{E})} 
&= 
\bigl(-\partial_x\bigl(a\partial_x y\bigr),y\bigr)_{L^2(\mathcal{E})}
+\bigl(b\partial_x y,y\bigr)_{L^2(\mathcal{E})}
+\bigl(py,y\bigr)_{L^2(\mathcal{E})}
\\
&= 
\bigl(a\,\partial_x y,\partial_x y\bigr)_{L^2(\mathcal{E})}
-
\sum_{v \in \mathcal{V}}\sum_{e \in \mathcal{E}(v)}
a_e(v)y_e(v)\,\partial_x y_e(v)n_e(v)\\
&\qquad+\bigl(b\partial_x y,y\bigr)_{L^2(\mathcal{E})}+\bigl(py,y\bigr)_{L^2(\mathcal{E})}.
\end{align*}
Because $y(v)=0$ for every $v \in \mathcal{V}_b$ and 
\begin{align*}
\sum_{e \in \mathcal{E}(v)} a_e(v)\,y_e(v)\,\partial_x y_e(v)\,n_e(v) 
= 0
\quad 
\text{for all } v \in \mathcal{V}_0,
\end{align*}
the boundary term vanishes, giving
\begin{align*}
(Ay,y)_{L^2(\mathcal{E})}
=
\bigl(a\,\partial_x y,\,\partial_x y\bigr)_{L^2(\mathcal{E})}
+\bigl(b\,\partial_x y,\,y\bigr)_{L^2(\mathcal{E})}
+\bigl(p\,y,\,y\bigr)_{L^2(\mathcal{E})}.
\end{align*}
Using Hölder’s and Young’s inequalities and the uniform ellipticity of $a$, we deduce
\begin{align*}
(Ay,y)_{L^2(\mathcal{E})}
&\geq 
a_0\|\partial_x y\|_{L^2(\mathcal{E})}^2
-
\frac{\varepsilon\|b\|_{L^\infty(\mathcal{E})}^2}{2}\|\partial_x y\|_{L^2(\mathcal{E})}^2-\frac{\varepsilon\|b\|_{L^\infty(\mathcal{E})}^2}{2\varepsilon}\|y\|_{L^2(\mathcal{E})}^2\\
&\qquad
-
\|p\|_{L^\infty(\mathcal{E})}\,\|y\|_{L^2(\mathcal{E})}^2\\
&\geq \left(a_0-\frac{\varepsilon\|b\|_{L^\infty(\mathcal{E})}^2}{2}\right)\|\partial_x y\|_{L^2(\mathcal{E})}^2 - \left(\frac{\|b\|_{L^\infty(\mathcal{E})}^2}{2\varepsilon}+\|p\|_{L^\infty(\mathcal{E})}\right)\|y\|_{L^2(\mathcal{E})}^2\\
&\geq C\|\partial_x y\|_{L^2(\mathcal{E})}^2 -\lambda\|y\|_{L^2(\mathcal{E})}^2, 
\end{align*}
Therefore, by choosing $\varepsilon = a_0/\|b\|_{L^\infty(\mathcal{E})}^2$, it follows that
\begin{align}\label{eq:momotonicity}
    ((A+\lambda I )y,y)_{L^2(\mathcal{E})}\geq \frac{a_0}{2}\| y\|_{H_0^1(\mathcal{E})}^2\geq 0,
\end{align}
with $\lambda = \|b\|_{L^\infty(\mathcal{E})}^2/2\varepsilon+\|p\|_{L^\infty(\mathcal{E})}$. Hence, $A +\lambda I$ is accretive.

\smallbreak
\textit{(ii) Surjectivity of  $A+\lambda I$.} To conclude the $M$-accretivity, it is enough to show that
$\text{Range}(\lambda I - A) = L^2(\mathcal{E})$. Let us consider the bilinear form $\mathfrak{a}:H_0^1(\mathcal{E})\times H_0^1(\mathcal{E})\to \R$
    \begin{align*}
        \mathfrak{a}_\lambda(y,z)= (a\partial_x y, \partial_x z)_{L^2(\mathcal{E})} +(b\partial_x y,z)_{L^2(\mathcal{E})} + (py,z)_{L^2(\mathcal{E})}+(\lambda y,z)_{L^2(\mathcal{E})}.
    \end{align*}
To verify that $\mathfrak{a}$ is continuous, let us apply Hölder’s and Young’s inequalities to get
\begin{align}\label{eq:dissip2}
   \mathfrak{a}_\lambda (y,y)&\leq\left( \|a\|_{L^\infty(\mathcal{E})} +C_p\|b\|_{L^\infty(\mathcal{E})} +C_p^2\|p\|_{L^\infty(\mathcal{E})} +\lambda C_p^2 \right) \| y \|_{H^1_0(\mathcal{E})}^2.
\end{align}
where $C_p>0$ is the Poincaré constant. Moreover, analogous to \eqref{eq:momotonicity} the coercivity of $\mathfrak{a}_\lambda$ follows. Therefore, Lax--Milgram theorem guarantees that for every $f\in L^2(\mathcal{E}) $ there exists a unique 
$y\in H_0^1(\mathcal{E})$ such that
\begin{align*}
\mathfrak{a}_\lambda(y,z)
=
(f,z)_{L^2(\mathcal{E})},
\quad \text{for every }
z\in H_0^1(\mathcal{E}).
\end{align*}
In particular, we deduce that  
\begin{align}\label{eq:variation_z_v_Cinf}
    (a\partial_x y,\partial_x z)_{L^2(\mathcal{E})}= -( b\partial_x y +(p+\lambda)y -f,z)_{L^2(\mathcal{E})} \quad \text{for every }z\in C^\infty_{c}(\mathcal{E}).
\end{align}
Therefore, the distributional derivative of $a\partial_x y$ is given by 
\begin{align}\label{eq:distriv_deriv_y}
    \partial_x(a\partial_x y) = b\partial_x y +(p+\lambda)y -f\in L^2(\mathcal{E}).
\end{align}
Since $a\in W^{1,\infty}_{pw}(\mathcal{E})$, we deduce that $y\in H^2(\mathcal{E})\cap H_0^1(\mathcal{E})$. On the other hand, since $C^\infty_0(\mathcal{E})$ is densely embedded into $H^1_{0}(\mathcal{E})$, integrating by parts \eqref{eq:variation_z_v_Cinf} we get
\begin{align}\label{eq:des_var_Cinf}
    (-\partial_x(a\partial_x y) + b\partial_x y +(p+\lambda)y -f,z)_{L^2(\mathcal{E})}+\sum_{v\in \mathcal{V}}\sum_{e\in\mathcal{E}(v)}a_e(v)z_e(v)\partial_x y_e(v)n_e(v)=0,
\end{align}
for every $z\in H^1_0(\mathcal{E})$. Since $z(v)=0$ for every $v\in \mathcal{V}_b$, and $z_e(v)=z_v$ for every $v\in\mathcal{V}_0$, using \eqref{eq:distriv_deriv_y}, \eqref{eq:des_var_Cinf} is reduced to
\begin{align*}
     0&=(-\partial_x(a\partial_x y) + b\partial_x y +(p+\lambda)y -f,z)_{L^2(\mathcal{E})}+\sum_{v\in \mathcal{V}}\sum_{e\in\mathcal{E}(v)}a_e(v)z_e(v)\partial_x y_e(v)n_e(v)\\
     &=  \sum_{v\in \mathcal{V}_0}z_v\sum_{e\in\mathcal{E}(v)} a_e(v)\partial_x y_e(v)n_e(v),
\end{align*}
for all $z\in H^1_0(\mathcal{E})$. Consequently, $y\in D(A)$, and therefore, $A+\lambda I$ is M-accretive. In consequence of Lumer-Phillips theorem (see \cite[Theorem 2.8]{Bensoussan}), $A+\lambda I$ generates a semigroup of contractions on $L^2(\mathcal{E})$, implying that $A$ generates a semigroup in $L^2(\mathcal{E})$.

\smallskip
\noindent
\textbf{2. Analytic semigroup.}
Now let us prove that this semigroup is analytic. For that let us consider the bilinear form $\mathfrak{b}:H_0^1(\mathcal{E})\times H_0^1(\mathcal{E})\to \R$ defined by     \begin{align}\label{eq:bilinear_form_b}
    \mathfrak{b}(y,z)= (a\partial_x y,\partial_xz )_{L^2(\mathcal{E})} +(b\partial_x y,z)_{L^2(\mathcal{E})} + (py,z)_{L^2(\mathcal{E})}.
\end{align}
Observe that the bilinear form $\mathfrak{b}$ is continuous. Moreover, in a similar way to \eqref{eq:momotonicity}, the bilinear form $\mathfrak{b}$ satisfies the Gårding inequality (or $H-V$ coercivity), i.e.,
\begin{align}\label{eq:VHcoercitivity}
\mathfrak{b}(y,y) +\lambda\| y\|^2_{L^2(\mathcal{E})} \geq \beta\|y \|_{H_0^1(\mathcal{E})},\quad y\in H^1_0(\mathcal{E}),
\end{align}
for some $\lambda,\,\beta>0$ ($\lambda$ can be taken as in \eqref{eq:momotonicity} and $\beta =a_0/2$). Now, consider the operator $\mathfrak{\mathcal{B}}$ associated to $\mathfrak{b}$
 defined by 
 \begin{align*}
     \begin{cases}
         D(\mathfrak{\mathcal{B}}) = \{y\in H_0^1(\mathcal{E})\,;\, z\mapsto \mathfrak{b}(y,z) \text{ is } L^2(\mathcal{E}) -\text{continuous}\},\\
         (-\mathfrak{\mathcal{B}}y,z)=\mathfrak{b}(y,z),\quad \text{for every }y\in D(\mathfrak{\mathcal{B}}),\, z\in H_0^1(\mathcal{E}). 
     \end{cases}
 \end{align*}
We will show that, in fact, $(\mathfrak{\mathcal{B}},D(\mathfrak{\mathcal{B}})) =(A,D(A))$. Let us characterize $D(\mathfrak{\mathcal{B}})$, for that, integrating by parts the first product of $\mathfrak{b}$, we observe
\begin{align}\label{eq:first_integration_bilinear}
   \nonumber \mathfrak{b}(y,z) &= (-\partial_x (a\partial_x y),z)_{L^2(\mathcal{E})}+(b\partial_x y,z)_{L^2(\mathcal{E})} +(p y,z)_{L^2(\mathcal{E})}\\ &\quad+\sum_{v\in\mathcal{V}}\sum_{e\in\mathcal{E}(v)}a_e(v)z_e(v)\partial_x y_e(v)n_e(v). 
\end{align}
Then, using the fact that $z_e(v)=0$ for every $v\in\mathcal{V}_b$, and applying H\"older's inequality 
\begin{align*}
    \mathfrak{b}(y,z) &= \left(\|\partial_x (a\partial_x y)\|_{L^2(\mathcal{E})}+\|b\partial_x y\|_{L^2(\mathcal{E})} +\|p y\|_{L^2(\mathcal{E})}\right)\|z\|_{L^2(\mathcal{E})}\\ &\quad+\sum_{v\in\mathcal{V}_0}\sum_{e\in\mathcal{E}(v)}a_e(v)z_e(v)\partial_x y_e(v)n_e(v). 
\end{align*}
Therefore, for $z\mapsto \mathfrak{b}(y,z) $ to be continuous into $L^2(\mathcal{E})$, we require $y\in H^2_{pw}(\mathcal{E})$ and $\sum_{e\in\mathcal{E}(v)}a_e(v)\partial_x y_e(v)n_e(v)=0.$ Consequently, $y\in D(A)$ and we have that $D(\mathfrak{\mathcal{B}})\subset D(A)$, a completely analogous argument show that $D(A) \subset D(\mathfrak{\mathcal{B}})$. Thus, from \eqref{eq:first_integration_bilinear}, it follows that $\mathfrak{\mathcal{B}}=A$. Finally, due to\Cref{lemma:dense_H1L2} and  \Cref{lemma:dense_L2H-1},  $(H_0^1(\mathcal{E}),L^2(\mathcal{E}),H^{-1}(\mathcal{E}))$ defines a Gelfant triplet. It follows from \cite[Theorem 2.12]{Bensoussan} that, due to \eqref{eq:VHcoercitivity}, we can conclude that $A$ generates an analytic semigroup.
\end{proof}

\subsection{Maximal regularity}

\begin{lemma}\label{lemma:fractional_power}
Let $A:D(A)\subset L^2(\mathcal{E})\to L^2(\mathcal E)$ be the operator defined in \eqref{def_A_heat}. Assume $(a,b,p)\in W^{1,\infty}_{pw}(\mathcal{E})\times L^{\infty}(\mathcal{E})\times L^{\infty}(\mathcal{E})$ satisfy \eqref{eq:assumption_coef_a} and \eqref{eq:assumption_coef_b}. Consider 
\begin{align*}
    D_{A}(1/2,2):=\{u(0)\,:\, u\in W(2,0; D(A),L^2(\mathcal{E}))\},
\end{align*}
where $ W(2,0;D(A),L^2(\mathcal{E})):= H^1(0,T;L^2(\mathcal{E}))\cap L^2(0,T; D(A))$. Then, $D_{A}(1/2,2)$ coincides with the space $H^1_{0}(\mathcal{E})$. 
\end{lemma}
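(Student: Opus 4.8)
The plan is to reduce the identification of $D_A(1/2,2)$ to the classical square‑root computation for the symmetric principal part of $A$, in three steps. First I would recall the trace characterization of the maximal‑regularity class: whenever $D(A)$ embeds densely and continuously into $L^2(\mathcal{E})$ — which holds here since $C^\infty_c(\mathcal{E})\subset D(A)$ and $C^\infty_c(\mathcal{E})$ is dense in $L^2(\mathcal{E})$ — the initial‑trace map $u\mapsto u(0)$ sends $W(2,0;D(A),L^2(\mathcal{E}))$ onto the real interpolation space $(D(A),L^2(\mathcal{E}))_{1/2,2}$, with equivalence of the quotient norm and the interpolation norm (this is the standard vector‑valued trace theorem; analyticity of the semigroup, already available from \Cref{prop:A_deg_generator}, guarantees the class is genuinely populated and the norms are comparable). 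Hence
\[
D_A(1/2,2)=(D(A),L^2(\mathcal{E}))_{1/2,2},
\]
and it remains to identify this space with $H^1_0(\mathcal{E})$.

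Second, I would pass to the self‑adjoint operator $A_0:D(A_0)\subset L^2(\mathcal{E})\to L^2(\mathcal{E})$, $A_0u:=-\partial_x(a\partial_x u)$, with the \emph{same} vertex/boundary conditions, i.e.\ $D(A_0)=\{u\in H^1_0(\mathcal{E})\cap H^2_{pw}(\mathcal{E}):\sum_{e\in\mathcal{E}(v)}a_e(v)\partial_x u_e(v)n_e(v)=0,\ \forall v\in\mathcal{V}_0\}$. Since $a\in W^{1,\infty}_{pw}(\mathcal{E})$ with $a\ge a_0>0$, edge‑wise elliptic regularity together with the vertex conditions yields $\|u\|_{H^2_{pw}(\mathcal{E})}\le C\big(\|A_0u\|_{L^2(\mathcal{E})}+\|u\|_{L^2(\mathcal{E})}\big)$, so the graph norm of $A_0$ is equivalent to the $H^2_{pw}$ norm on $D(A_0)$; and since the lower‑order part $b\partial_x u+pu$ is $A_0$‑bounded with relative bound zero — because $\|b\partial_x u+pu\|_{L^2(\mathcal{E})}\le C\|u\|_{H^1(\mathcal{E})}\le\varepsilon\|u\|_{H^2_{pw}(\mathcal{E})}+C_\varepsilon\|u\|_{L^2(\mathcal{E})}$ by interpolation — the operator $A$ has the same domain $D(A)=D(A_0)$ with equivalent graph norms (this is also essentially the computation of $D(\mathfrak{\mathcal{B}})$ carried out in the proof of \Cref{prop:A_deg_generator}). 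As real interpolation depends only on the underlying normed spaces, $(D(A),L^2(\mathcal{E}))_{1/2,2}=(D(A_0),L^2(\mathcal{E}))_{1/2,2}$.

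Third, I would use that $A_0$ is the operator associated with the symmetric, continuous, $H^1_0(\mathcal{E})$‑coercive form $\mathfrak{b}_0(u,z)=(a\partial_x u,\partial_x z)_{L^2(\mathcal{E})}$ (coercivity from $a\ge a_0$ and the Poincaré inequality on $H^1_0(\mathcal{E})$; if one prefers not to invoke Poincaré, shift by a constant $\lambda$, which changes neither $D(A_0^{1/2})$ nor the interpolation space). For a positive self‑adjoint operator the spectral theorem gives $(D(A_0),L^2(\mathcal{E}))_{1/2,2}=D(A_0^{1/2})$, while the classical form‑domain identification (Kato's theorem: the domain of the square root of the operator of a closed symmetric form bounded below equals the form domain) gives $D(A_0^{1/2})=H^1_0(\mathcal{E})$, with $\|A_0^{1/2}u\|_{L^2(\mathcal{E})}^2=\mathfrak{b}_0(u,u)\simeq\|u\|_{H^1_0(\mathcal{E})}^2$. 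Chaining the three identifications yields $D_A(1/2,2)=H^1_0(\mathcal{E})$.

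The main obstacle is the second step: rigorously establishing $D(A)=D(A_0)$ as normed spaces, i.e.\ the up‑to‑the‑vertices elliptic estimate $\|u\|_{H^2_{pw}(\mathcal{E})}\lesssim\|A_0u\|_{L^2(\mathcal{E})}+\|u\|_{L^2(\mathcal{E})}$ and the relative‑boundedness of $b\partial_x+p$, so that the interpolation space is insensitive to the non‑symmetric first‑order perturbation. Steps one and three are invocations of standard trace/interpolation and self‑adjoint semigroup theory; the only mild care needed there is the density of $D(A)$ in $L^2(\mathcal{E})$ and the coercivity of $\mathfrak{b}_0$, both of which are immediate.
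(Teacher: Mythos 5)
Your argument is correct, but it follows a genuinely different route from the paper's. The paper first identifies $D_A(1/2,2)$ with $(D(A),L^2(\mathcal{E}))_{1/2}$ and then with $D(A^{1/2})$ for the \emph{full non-self-adjoint} operator $A$ (via the $M$-accretivity of $A+\lambda I$ and the boundedness of its inverse); the heart of its proof is then a computation of the adjoint showing $D(A^*)=D(A)$ — this is precisely where the structural hypothesis \eqref{eq:assumption_coef_b} on $b$ is used, to kill the vertex term $\sum_{e\in\mathcal{E}(v)}y_e(v)z_e(v)b_e(v)n_e(v)$ — followed by an appeal to Lions' theorem ($D(A)=D(A^*)$ implies $D(A^{1/2})=D(A^{*1/2})=D(\mathfrak{b})=H^1_0(\mathcal{E})$). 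You instead never touch the square root of the non-self-adjoint operator: you observe that $D(A)$ and $D(A_0)$ coincide as sets (indeed, by \eqref{def_A_heat} they are literally the same set) and carry equivalent graph norms, because the edge-wise elliptic estimate $\|u\|_{H^2_{pw}(\mathcal{E})}\lesssim\|A_0u\|_{L^2(\mathcal{E})}+\|u\|_{L^2(\mathcal{E})}$ (using $a\in W^{1,\infty}_{pw}$, $a\ge a_0>0$, so $1/a\in W^{1,\infty}_{pw}$) together with Ehrling's lemma makes $b\partial_x+p$ relatively $A_0$-bounded with bound zero; since real interpolation is insensitive to an equivalent renorming of the couple, you may replace $A$ by its symmetric principal part $A_0$, for which $(D(A_0),L^2(\mathcal{E}))_{1/2,2}=D(A_0^{1/2})=H^1_0(\mathcal{E})$ follows from the spectral theorem and Kato's second representation theorem. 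Each approach buys something: the paper's yields the identification $D(A^{1/2})=H^1_0(\mathcal{E})$ for the convection–diffusion operator itself (of possible independent use), at the price of invoking \eqref{eq:assumption_coef_b} and the delicate identity between the real interpolation space and the domain of the square root of a non-self-adjoint operator; your detour through $A_0$ sidesteps the Kato-square-root subtleties entirely and, notably, does not use \eqref{eq:assumption_coef_b} at all, so it proves the lemma under weaker hypotheses. The only point to make fully explicit is the second step — the up-to-the-vertices $H^2_{pw}$ a priori estimate and the relative boundedness — which you correctly flag as the crux; as sketched, it goes through.
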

\begin{proof}
The argument is carried out in two steps.

\smallbreak
\noindent
\textbf{Step 1. Identification via interpolation.}
By \cite[Proposition~4.3]{Bensoussan}, one obtains $D_{A}(1/2,2) \simeq \bigl(D(A),\,L^2(\mathcal{E})\bigr)_{1/2},$ where the space on the right-hand side denotes the real interpolation space. To characterize this interpolation space, first note that $(A + \lambda I)$ is $M$-accretive by \Cref{prop:A_deg_generator}, and due to \eqref{eq:momotonicity}, $(A + \lambda I)$ is injective. It follows that $(A + \lambda I)^{-1}$ is well-defined.

We now verify that $(A + \lambda I)^{-1}$ is a bounded operator on $L^2(\mathcal{E})$. Let $f \in L^2(\mathcal{E})$ and define $y := (A+\lambda I)^{-1} f$. If $y=0$, the claim is immediate. Otherwise, applying Poincaré's inequality, Hölder's inequality, and \eqref{eq:momotonicity} yields,
\begin{align*}
\|y\|_{L^2(\mathcal{E})}^2 \,\frac{a_0}{2C_p}
\le
\bigl((A+\lambda I)\,y,y\bigr)_{L^2(\mathcal{E})}
=
\bigl(f,y\bigr)_{L^2(\mathcal{E})}
\le
\|f\|_{L^2(\mathcal{E})}\,\|y\|_{L^2(\mathcal{E})}, 
\end{align*}
for every $f\in L^2(\mathcal{E})$. Dividing by $\|y\|_{L^2(\mathcal{E})}$ shows that $(A + \lambda I)^{-1}$ is indeed bounded on $L^2(\mathcal{E})$. Hence, by \cite[Proposition~6.1]{Bensoussan}, we have that
\begin{align}\label{eq:interpolations_frac}
\bigl(D(A),\,L^2(\mathcal{E})\bigr)_{1/2}
=
\bigl(D(A+\lambda I),\,L^2(\mathcal{E})\bigr)_{1/2}
=
D\bigl((A+\lambda I)^{1/2}\bigr)
=
D\bigl(A^{1/2}\bigr).
\end{align}

\smallbreak
\noindent
\textbf{Step 2. Characterization of the domain of $A^{1/2}$.}
Let us characterize $A^*$, the adjoint operator of $A$. Consider $y \in D(A)$. Integrating by parts gives
\begin{align*}
(Ay,z)
&=
-\bigl(\partial_x\bigl(a\,\partial_x y\bigr),z\bigr)_{L^2(\mathcal{E})}
+\bigl(b\partial_x y,z\bigr)_{L^2(\mathcal{E})}
+\bigl(py,z\bigr)_{L^2(\mathcal{E})}
\\
&=
-\bigl(y,\partial_x\bigl(a\partial_x z\bigr)\bigr)_{L^2(\mathcal{E})}
-
\bigl(y,\partial_x (bz)\bigr)_{L^2(\mathcal{E})}
+
\bigl(y,pz\bigr)_{L^2(\mathcal{E})}
\\
&\quad
+\sum_{v\in \mathcal V}
\sum_{e\in \mathcal{E}(v)}
\Bigl(
a_e(v)y_e(v)\partial_x z_e(v)n_e(v)
-
a_e(v)\partial_x y_e(v)z_e(v)n_e(v)+
y_e(v)z_e(v)b_e(v)n_e(v)
\Bigr).
\end{align*}
For the products to be well-defined in $L^2(\mathcal{E})$, one needs $z\in H^2_{pw}(\mathcal{E})$. We impose vertex conditions for $z$ such that the boundary term vanishes. Since $y\in H_0^1(\mathcal{E})$, setting 
\begin{align*}
\sum_{e\in \mathcal{E}(v)} a_e(v)\partial_x z_e(v)\,n_e(v) = 0,
\quad\text{for every }v\in V_0,
\end{align*}
the first boundary term vanishes. Further, because $y\in D(A)$, requiring $z_e\in H_0^1(\mathcal{E})$ makes the second boundary term vanishing as well. Lastly, the remaining boundary term vanishes since $z\in H_0^1(\mathcal{E})$ and \eqref{eq:assumption_coef_b} holds. Consequently, for all $y,z\in D(A)$,
\begin{align*}
(Ay,z)
=
\bigl(y,\,
- \partial_x\bigl(a\,\partial_x z\bigr)
- \partial_x (bz)
+ pz
\bigr)_{L^2(\mathcal{E})}
=
(y,A^*z),
\end{align*}
which shows $D(A^*) = D(A)$. Applying \cite[Theorem~8.2]{MR2124040} (see also \cite{Lions1962EspacesDE}) we deduce that $D\bigl(A^{1/2}\bigr)
=
D\bigl(A^{*1/2}\bigr)
=
D(\mathfrak{b})
=
H_0^1(\mathcal{E}),$ where $\mathfrak{b}$ is the bilinear form associated with $A$ from \eqref{eq:bilinear_form_b}. Finally, \eqref{eq:interpolations_frac} completes the argument.
\end{proof}

\begin{theorem}\label{th:maximal_regularity}
Let $A:D(A)\subset L^2(\mathcal{E})\to L^2(\mathcal{E})$ be the operator defined in \eqref{def_A_heat}. Then, if $u^0\in H^1_0(\mathcal{E})$ and $f\in L^2(0,T;L^2(\mathcal{E}))$, the equation 
\begin{align}\label{eq:cauchy_problem_homo}
    \begin{cases}
        \partial_t u + Au = f, \quad t\in (0,T),\\
    u(0)=u^0,
    \end{cases}
\end{align}
has a unique solution $u$ in $ H^1(0,T;L^2(\mathcal{E}))\cap L^2(0,T;D(A))\cap C([0, T]; H_0^1(\mathcal{E})).$ Moreover, $u$ satisfies the following inequality
\begin{align*}
        \|u\|_{H^1(0, T; L^2(\mathcal{E}))} + \|u\|_{L^2(0, T; D(A))} \leq C \left( \|f\|_{L^2(0, T; L^2(\mathcal{E}))} + \|u^0\|_{H^1_0(\mathcal{E})} \right).
    \end{align*} 
\end{theorem}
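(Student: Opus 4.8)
The plan is to obtain Theorem~\ref{th:maximal_regularity} as a direct application of the abstract $L^2$-in-time maximal regularity theory for analytic semigroups on Hilbert spaces, feeding in the two structural facts already proved upstream: that $A$ generates an analytic semigroup on $L^2(\mathcal{E})$ (\Cref{prop:A_deg_generator}), and that the real interpolation space $(D(A),L^2(\mathcal{E}))_{1/2}$ coincides with $H^1_0(\mathcal{E})$ (\Cref{lemma:fractional_power}).

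First I would reduce to a bounded analytic semigroup by the shift $A\rightsquigarrow A+\lambda I$ with $\lambda$ as in \eqref{eq:momotonicity}: writing $u(t)=e^{-\lambda t}\tilde u(t)$ turns \eqref{eq:cauchy_problem_homo} into the same problem for $\tilde u$ with operator $A+\lambda I$ and right-hand side $e^{\lambda t}f\in L^2(0,T;L^2(\mathcal{E}))$, without changing the regularity class or the initial datum. Since $-(A+\lambda I)$ generates a bounded (indeed contraction) analytic semigroup on the Hilbert space $L^2(\mathcal{E})$, the Hilbert-space maximal regularity theorem—valid for every generator of an analytic semigroup on a Hilbert space, with no $\mathcal{R}$-boundedness hypothesis precisely because the time exponent is $2$; see \cite[Proposition 3.1 and Proposition 4.3]{Bensoussan}—gives, for $u^0=0$, a unique solution in $W(2,0;D(A),L^2(\mathcal{E}))=H^1(0,T;L^2(\mathcal{E}))\cap L^2(0,T;D(A))$ together with the estimate $\|u\|_{H^1(0,T;L^2(\mathcal{E}))}+\|u\|_{L^2(0,T;D(A))}\le C\|f\|_{L^2(0,T;L^2(\mathcal{E}))}$.

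Next I would incorporate the initial datum. By \cite[Proposition 4.3]{Bensoussan}, the trace at $t=0$ of $W(2,0;D(A),L^2(\mathcal{E}))$ is exactly $D_A(1/2,2)\simeq(D(A),L^2(\mathcal{E}))_{1/2}$, and the embedding $W(2,0;D(A),L^2(\mathcal{E}))\hookrightarrow C([0,T];(D(A),L^2(\mathcal{E}))_{1/2})$ is continuous. By \Cref{lemma:fractional_power}, $(D(A),L^2(\mathcal{E}))_{1/2}=H^1_0(\mathcal{E})$; hence, given $u^0\in H^1_0(\mathcal{E})$, the lifting $w(t):=e^{-t(A+\lambda I)}u^0$ lies in $W(2,0;D(A),L^2(\mathcal{E}))$ with $w(0)=u^0$ and $\|w\|_{W(2,0)}\le C\|u^0\|_{H^1_0(\mathcal{E})}$ (analyticity of the semigroup). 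Solving \eqref{eq:cauchy_problem_homo} for $v:=u-w$ with zero initial datum and right-hand side $f-\partial_t w-Aw\in L^2(0,T;L^2(\mathcal{E}))$ and adding back $w$ yields $u\in W(2,0;D(A),L^2(\mathcal{E}))$ satisfying the stated estimate, and the trace embedding gives $u\in C([0,T];H^1_0(\mathcal{E}))$. Uniqueness follows since the difference of two solutions solves the homogeneous problem with zero data, which vanishes—either from uniqueness of the mild solution or by testing with $u$ and using the accretivity \eqref{eq:momotonicity}.

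The only genuinely delicate ingredient is the identification $(D(A),L^2(\mathcal{E}))_{1/2}\cong H^1_0(\mathcal{E})$ for the non-self-adjoint $A$, which is where hypothesis \eqref{eq:assumption_coef_b} is used (to secure $D(A^*)=D(A)$, hence $D(A^{1/2})=D(A^{*1/2})=D(\mathfrak b)=H^1_0(\mathcal{E})$ via \cite[Theorem 8.2]{MR2124040}); but this is exactly \Cref{lemma:fractional_power}, already available. With that in hand, the remaining work is the bookkeeping of the shift and of the constants, which is routine, so I do not expect any essential obstacle at this stage.
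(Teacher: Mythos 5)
Your proposal is correct and follows essentially the same route as the paper: both rest on the analyticity of the semigroup (\Cref{prop:A_deg_generator}) and the identification $D_A(1/2,2)\cong H^1_0(\mathcal{E})$ (\Cref{lemma:fractional_power}), the paper invoking the abstract isomorphism $u\mapsto(\partial_t u+Au,\,u(0))$ from \cite[Theorem 3.1]{Bensoussan} as a black box where you unpack its standard proof (zero-initial-datum case plus semigroup lift of $u^0$). The only slip is the direction of the exponential shift: to replace $A$ by $A+\lambda I$ one substitutes $u=e^{\lambda t}\tilde u$ (with right-hand side $e^{-\lambda t}f$), not $u=e^{-\lambda t}\tilde u$, which would yield $A-\lambda I$; this is harmless and does not affect the argument.
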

\begin{proof}
    Due to \Cref{prop:A_deg_generator}, $A$ generates an analytic semigroup. Then, we know from \cite[Theorem 3.1]{Bensoussan} that the map 
    \begin{align*}
    \Phi:H^1(0,T;L^2(\mathcal{E}))\cap L^2(0,T;D(A))& \to  L^2(0,T;\mathcal{E})\times D_A(1/2,2)\\
    u&\mapsto (\partial_t u +A,u(0))
    \end{align*}
    is an isomorphism. On the other hand, due to \cref{lemma:fractional_power}, we have that $D_{A}(1/2,2)$ coincides with the space $H^1_{0}(\mathcal{E})$. Therefore, under the assumptions of \Cref{th:maximal_regularity}, we can guarantee that  \eqref{eq:cauchy_problem_homo} has a unique solution $u$ in   $H^1(0,T;L^2(\mathcal{E}))\cap L^2(0,T;D(A))$. Finally, using the analyticity of $A$, \cite[Remark 4.2]{Bensoussan} ensures that $H^1(0,T;L^2(\mathcal{E}))\cap L^2(0,T;D(A))\subset C([0, T]; H_0^1(\mathcal{E})),$ concluding the proof.

\end{proof}

	\vspace{5mm}
	\section*{Acknowledgments}
M. Hern\'{a}ndez has been funded by the Transregio 154 Project, Mathematical Modelling, Simulation, and Optimization Using the Example of Gas Networks of the DFG, project C07, the fellowship `ANID-DAAD bilateral agreement", and DAAD/CAPES grant 57703041 ``Control and numerical analysis of complex systems", and  DFG and NRF. Südkorea-NRF-DFG-2023 programme, grant 530756074.

The author wishes to express sincere gratitude to H. Meinlschmidt for valuable discussions on interpolation spaces, and to S. Zamorano for a careful and critical reading of the manuscript. The author is also grateful to E. Zuazua for his unconditional support and encouragement.
	\bibliographystyle{abbrv} 
	\bibliography{biblio.bib}

\end{document}